\newtheorem{theorem}{Theorem}
\newtheorem{lemma}{Lemma}
\newcommand\bbR{\mathbb{R}}
\newcommand\bbN{\mathbb{N}}
\newcommand\bbI{\mathbb{I}}
\newcommand\bxi{\boldsymbol{\xi}}
\newcommand\bb{\boldsymbol{b}}
\newcommand\bc{\boldsymbol{c}}
\newcommand\bx{\boldsymbol{x}}
\newcommand\bu{\boldsymbol{u}}
\newcommand\be{\boldsymbol{e}}
\newcommand\bh{\boldsymbol{h}}
\newcommand\bn{\boldsymbol{n}}
\newcommand\br{\boldsymbol{r}}
\newcommand\bC{\boldsymbol{C}}
\newcommand\bS{\boldsymbol{S}}
\newcommand\bZ{\boldsymbol{Z}}
\newcommand\bA{\boldsymbol{\mathrm{A}}}
\newcommand\bH{\boldsymbol{\mathrm{H}}}
\newcommand\bI{\boldsymbol{\mathrm{I}}}
\newcommand\bK{\boldsymbol{\mathrm{K}}}
\newcommand\bM{\boldsymbol{\mathrm{M}}}
\newcommand\bQ{\boldsymbol{\mathrm{Q}}}
\newcommand\bR{\boldsymbol{\mathrm{R}}}
\newcommand\bW{\boldsymbol{\mathrm{W}}}
\newcommand\dd{\,\mathrm{d}}
\newcommand\He{\mathit{He}}
\newcommand\mH{\mathcal{H}}
\newcommand\Kn{\mathrm{Kn}}
\newcommand\bLambda{\boldsymbol{\mathrm{\Lambda}}}
\newcommand\bomega{\boldsymbol{\omega}}
\newcommand\bzero{\boldsymbol{0}}
\newcommand\diag{\mathrm{diag}}
\newcommand\pd[2]{\dfrac{\partial {#1}}{\partial {#2}}}
\newcommand\od[2]{\dfrac{\dd {#1}}{\dd {#2}}}
\newcommand\odd[2]{\dfrac{\mathrm{D}{#1}}{\mathrm{D}{#2}}}
\numberwithin{equation}{section}
\numberwithin{figure}{section}
\theoremstyle{remark} \newtheorem{remark}{Remark}}
\title{Resolving Knudsen Layer by High Order Moment Expansion}
\author{Yuwei Fan\thanks{Department of Mathematics, Stanford University, Stanford, CA 94305, 
    email: {\tt ywfan@stanford.edu}.},~~ 
Jun Li\thanks{School of Mathematical Sciences, Peking University,
    Beijing, China, email: {\tt lijun609@pku.edu.cn}.},~~
Ruo Li\thanks{CAPT, LMAM \& School of Mathematical Sciences, Peking
    University, Beijing, China, email: {\tt rli@math.pku.edu.cn}.},~~
Zhonghua Qiao\thanks{Department of Applied Mathematics, the Hong
    Kong Polytechnic University, Hung Hom, Hong Kong, 
    email:{\tt zhonghua.qiao@polyu.edu.hk}.}
}
\begin{document}
\maketitle
\begin{abstract}
  We model the Knudsen layer in Kramers' problem by linearized high
  order hyperbolic moment system. Due to the hyperbolicity, the
  boundary conditions of the moment system is properly reduced from
  the kinetic boundary condition. For Kramers' problem, we give the
  analytical solutions of moment systems. With the order increasing of
  the moment model, the solutions are approaching to the solution of
  the linearized BGK kinetic equation. The velocity profile in the
  Knudsen layer is captured with improved accuracy for a wide range of
  accommodation coefficients.

\vspace*{4mm}
\end{abstract}

\section{Introduction}
In the area of kinetic theory, Kramers' problem \cite{Kramers1949} is
generally considered as the most basic way to understand the
fundamental flow physics of the wall, which defining the Knudsen layer
\cite{Lilley2007}, without some of the additional complications in
other more realistic problems, such as flow in a plane channel
\cite{Garcia2009} or cylindrical tube \cite{Higuera1989,
Grucelski2013}. It is well known \cite{Karniadakis2002, Zhang2012}
that the classical Navier-Stokes-Fourier(NSF) equations with
appropriate boundary conditions can be used to describe the flow with
satisfactory accuracy when the gas is close to a statistical
equilibrium state. However, more accurate model is needed to depict
the nonequilibrium effects near the wall, where the continuum
assumption is essentially broken down and NSF equations themselves
become inappropriate \cite{Lilley2007, Dongari2009}. This is exactly
the case in Knudsen layers.

During the past decades, various methods have been developed to
investigate the Kramers' problem based on the Boltzmann equation. Highly
accurate results on the dependence of slip coefficient for the
unmodeled Boltzmann equation and general boundary condition have been
reported \cite{Loyalka1967, Loyalka1971, Klinc1972}. Variable
collision frequency models of the Boltzmann equation
\cite{Cercignani1969, Williams2001, Loyalka1967, Loyalka1975,
Loyalka1990, Siewert2001} are extensively discussed. We note that
the direct simulation Monte Carlo (DSMC) method \cite{Bird} is widely
used to solve the Boltzmann equation numerically. Unfortunately, DSMC
calculations impose prohibitive computational demands for many
applications of current interests. The intensive computational
demands of DSMC method have motivated recent interests in the
application of higher-order hydrodynamic models to simulate rarefied
flows \cite{Reese2003, Guo2006, Gu2010, Mizzi2007}. There are many
competing sets of higher-order constitutive relations, which are
derived from the fundamental Boltzmann equation using differing
approaches. The classical approaches are the Chapman-Enskog technique
and Grad's moment method. Among these alternative macroscopic modeling
and simulation strategies \cite{Grad, Levermore}, the moment method is
quite attractive due to its numerous advantages \cite{Muller,
Struchtrup2002, TorrilhonEditorial}. It is regarded as a useful
tool to extend classical fluid dynamics, and achieves highly accurate
approximations with great efficiency.

The moment method for gas kinetic theory \cite{Grad} has been applied
on wall-bounded geometries which supplemented by slip and jump
boundary conditions \cite{Marques2001}, while its application is
seriously limited due to the lack of hyperbolicity \cite{Muller,
Grad13toR13}. Particularly for the $3$D case, the moment system is
not hyperbolic in any neighborhood of the Maxwellian. Only recently
this fatal defect has been remedied \cite{Fan, Fan_new, ANRxx} that
globally hyperbolic models can be deduced. The global hyperbolicity of
the new models provides us the information propagation directions,
and thus a proper boundary condition of the moment model may be
proposed. This motivates us to study the Kramers' problem using the
new moment models.

Starting from the globally hyperbolic moment system (HME), we first
derive a linearized hyperbolic moment model to depict the Kramers'
problem. We found that the linearized model is even simpler than one's
expectation, since the equations for velocity are decoupled from
other equations in the system involving high order moments. The number
of equations in the decoupled part related with velocity is the same
as the moment expansion order only. Then we establish the boundary
conditions for the linearized moment model according to physical and
mathematical requirements for the system. Following Grad's approach in
\cite{Grad} for the kinetic accommodation model by Maxwell
\cite{Maxwell}, we propose the general boundary conditions for shear
flows. After that, by linearizing the velocity jump and high order
terms in the expression of the general boundary conditions, it is then
adapted to the boundary condition for the linearized model. This makes us
able to give the expression of velocity by solving the decoupled
system related with velocity together with the corresponding boundary
condition. It is extensively believed that the linearized system is
accurate enough for low-speed flows, which encourages us to apply the
solution of the velocity obtained to study Kramers' problem.

To obtain the full velocity profile and the velocity slip coefficient
in Kramers' problem, one may adopt a certain direct numerical method to
solve the linearized Boltzmann equation. However, the linearized moment
system can depict the velocity profile in the Knudsen layer with
analytical expressions. This can be used to provide a convenient
correction near the wall \cite{Lockerby2008} for the lower order
macroscopic system, such as NSF equations. In the moment method, the
Knudsen layer appears as superpositions of exponential layers
\cite{Struchtrup2008.1}. For the result we give based on HME, the
number of exponential layers is increasing. Comparing with the results
given by direct numerical simulation, our solutions illustrate a
significant improvement in accuracy than the results in references
when more and more high order moments are considered. Particularly,
our results can capture the velocity profile in the Knusen layer
accurately for a wide range of accommodation coefficients. We note that
our linearized model is of the same computational cost as the lower
order moment system.

This paper is organized as follows. In Section \ref{sec:hme} we
reviewed HME for Boltzmann equations and derived the linearized HME.
The boundary conditions for HME and linearized HME are established in
Section \ref{sec:bc}. The solutions of linearized equations are solved in
detail for Kramers' problem in Section \ref{sec:kramers}. With the
solutions of the velocity profile in Knudsen layer, some important
coefficients, such as defect velocity, are compared with the other
model of kinetic solution in the same section. We then draw some
conclusions to end this paper.


\section{Linearized HME for Boltzmann Equation}
\label{sec:hme}
\subsection{Boltzmann equation}
In gas kinetic theory, the motion of particles of gas can be depicted
by the Boltzmann equation \cite{Boltzmann}
\begin{equation}\label{eq:boltzmann}
  \pd{f}{t} + \bxi\cdot\nabla_{\bx}f = Q(f,f),
\end{equation}
where $f(t,\bx,\bxi)$ is the number density distribution function
which depends on the time $t\in\bbR^+$, the spatial position
$\bx\in\bbR^3$ and the microscopic particle velocity $\bxi\in\bbR^3$,
and $Q(f,f)$ is the collision term.  In this paper, we limit the
discussion on the BGK collision model \cite{BGK}, which reads:
\begin{equation}\label{eq:collision}
    Q(f,f) = \frac{\rho\theta}{\mu}(\mathcal{M} - f) ,
\end{equation}
where $\mu$ is the viscosity and $\mathcal{M}$ is the local
thermodynamic equilibrium, usually called the local Maxwellian,
defined by
\begin{displaymath}
    \mathcal{M}=\frac{\rho}{(2\pi\theta)^{3/2}}
    \exp\left( -\frac{|\bxi-\bu|^2}{2\theta} \right).
\end{displaymath}
Here the density $\rho$, the macroscopic velocity $\bu$ and the
temperature $\theta$ are related to the distribution function as
\begin{equation}
    \rho =\int_{\bbR^3}f\dd\bxi,\qquad
    \rho\bu =\int_{\bbR^3}\bxi f\dd\bxi,\qquad
    \rho|\bu|^2+3\rho\theta =\int_{\bbR^3}|\bxi|^2f\dd\bxi.
\end{equation}
Multiplying the Boltzmann equation by $(1,\bxi,|\bxi|^2)$ and
integrating both sides over $\bbR^3$ with respect to $\bxi$, we obtain
the conservation laws of mass, momentum and energy as
\begin{equation}\label{eq:conservationlaws}
  \begin{aligned}
    \odd{\rho}{t}&+\rho\sum_{d=1}^3\pd{u_d}{x_d}=0,\\ 
    \rho\odd{u_i}{t}&+\sum_{d=1}^3\pd{p_{id}}{x_d}=0,\\ 
    \frac{3}{2}\rho\odd{\theta}{t}&+\sum_{k,d=1}^3p_{kd}\pd{u_k}{x_d}+\sum_{d=1}^3\pd{q_d}{x_d}=0, 
  \end{aligned}
\end{equation}
where
$\odd{\cdot}{t} := \pd{\cdot}{t} + \displaystyle \sum_{d=1}^3
u_d \pd{\cdot}{x_d}$
is the material derivative, and the pressure tensor $p_{ij}$ and the
heat flux $q_i$ are defined by
\begin{equation}
  p_{ij}=\int_{\bbR^3}(\xi_i-u_i)(\xi_j-u_j)f\dd\bxi,\quad
  q_i=\frac{1}{2}\int_{\bbR^3}|\bxi-\bu|^2(\xi_i-u_i)f\dd\bxi, \quad
  i,j=1,2,3.
\end{equation}
For convenience, we define the pressure $p$ and the stress tensor
$\sigma_{ij}$ by
\[
p=\sum_{d=1}^3\frac{p_{dd}}{3}=\rho\theta,\quad 
\sigma_{ij}=p_{ij}-p\delta_{ij},\quad i,j=1,2,3.
\]

\subsection{HME and its linearization}
The moment method in kinetic theory is first proposed by Grad in 1949
\cite{Grad}. The primary idea is to expand the distribution function
around the Maxwellian into Hermite series
\begin{equation}\label{eq:expansion}
    f(t,\bx,\bxi) =
    \frac{\mathcal{M}}{\rho}\sum_{\alpha\in\bbN^3}f_{\alpha}(t,\bx)\He_{\alpha}^{[\bu,\theta]}(\bxi)=
    \sum_{\alpha\in\bbN^3}f_{\alpha}(t,\bx)\mH_{\alpha}^{[\bu,\theta]}(\bxi),
\end{equation}
where $\alpha = (\alpha_1, \alpha_2, \alpha_3) \in \bbN^3$ is a 3D
multi-index, and $\He_{\alpha}^{[\bu,\theta]}(\bxi)$ are generalized
Hermite polynomials defined by
\begin{equation}\label{eq:hermite-poly}
    \He_{\alpha}^{[\bu,\theta]}(\bxi) =
    \frac{(-1)^{|\alpha|}}{\mathcal{M}}
    \dfrac{\partial^{|\alpha|} \mathcal{M}}{\partial
        \xi_1^{\alpha_1} \partial \xi_2^{\alpha_2} \partial
        \xi_3^{\alpha_3}}, \qquad
        |\alpha|=\sum_{d=1}^3\alpha_d,
\end{equation}
and $\mH_{\alpha}^{[\bu,\theta]}(\bxi)$ is the basis function defined
by
\begin{equation}\label{eq:basis-fun}
    \mH_{\alpha}^{[\bu,\theta]}(\bxi) = \frac{\mathcal{M}}{\rho}
    \He_{\alpha}^{[\bu,\theta]}(\bxi).
\end{equation}
Directly calculations yield, for
$i,j=1,2,3$,
\begin{equation}
  \begin{aligned}
    &f_{0}=\rho,\quad
    f_{e_i}=0,\quad 
    \sum_{d=1}^3f_{2e_d}=0,\\
    p_{ij}=p\delta_{ij} &+ (1+\delta_{ij})f_{e_i+e_j},\quad
    q_i = 2 f_{3e_i}+\sum_{d=1}^3 f_{e_i+2e_d}.
  \end{aligned}
\end{equation}
Substituting Grad's expansion \eqref{eq:expansion} into the
Boltzmann equation, and matching the coefficient of the basis function
$\mH^{[\bu,\theta]}_{\alpha}(\bxi)$, one can obtain the governing equations of
$\bu$, $\theta$ and $f_{\alpha}$, $\alpha\in\bbN^3$. However, the
resulting system contains infinite number of equations. Choosing a
positive integer $3\leq M\in\bbN$, and discarding all the equations
including $\pd{f_{\alpha}}{t}$, $|\alpha|>M$, and setting
$f_{\alpha}=0$, $|\alpha|>M$ to closure the residual system, we obtain
the $M$-th order Grad's moment system. Since
\[
Q(f,f) = - \frac{p}{\mu} \sum_{|\alpha| \geq 2} f_{\alpha}
\mH_{\alpha}^{[\bu,\theta]}(\bxi)
=-\frac{p}{\mu}\mathrm{H}(|\alpha|-2)f_{\alpha},
\]
where $\mathrm{H}(n)$ is the Heaviside step function 
\[
\mathrm{H}(n) = \left\{ \begin{array}{ll}
  0,  &   n<0,\\
  1,  &   n\geq0,
\end{array} \right.
\]
the $M$-th order Grad's moment system can be written as
\begin{equation}\label{eq:arbit-system}
  \begin{aligned}
    \odd{f_{\alpha}}{t} &+ \sum_{d=1}^3 \left( \theta \pd{f_{\alpha-e_d}}{x_d} +
    (1-\delta_{|\alpha|,M})(\alpha_d + 1)\pd{f_{\alpha+e_d}}{x_d} \right) \\
    + \sum_{k=1}^3 f_{\alpha-e_k} \odd{u_k}{t} &+ \sum_{k,d=1}^3 \pd{u_k}{x_d}
    \left(\theta f_{\alpha-e_k-e_d} + (\alpha_d + 1) f_{\alpha-e_k+e_d}
    \right) \\
    + \frac{1}{2} \sum_{k=1}^3 f_{\alpha-2e_k} \odd{\theta}{t} &+ \sum_{k,d=1}^3
    \frac{1}{2} \pd{\theta}{x_d} \left(
    \theta f_{\alpha-2e_k-e_d} + (\alpha_d + 1) f_{\alpha-2e_k+e_d}
    \right)\\
    &= -\frac{p}{\mu}f_{\alpha}\mathrm{H}(|\alpha|-2) , \quad |\alpha| \leq M,
  \end{aligned}
\end{equation}
where $\delta_{i,j}$ is Kronecker delta. Here and hereafter we agree
that $(\cdot)_{\alpha}$ is taken as zero if any component of $\alpha$
is negative. 

However, as is pointed in \cite{Muller,Grad13toR13}, Grad's moment
system lacks global hyperbolicity and is not hyperbolic even in any
neighborhood of local Maxwellian.  The globally hyperbolic
regularization proposed in \cite{Fan,Fan_new} figures the drawback out
essentially, and results in globally Hyperbolic Moment Equations (HME)
as 
\begin{equation}\label{eq:moment-system}
  \begin{aligned}
    \odd{f_{\alpha}}{t} &+ \sum_{d=1}^3 \left( \theta \pd{f_{\alpha-e_d}}{x_d} +
    (1 - \delta_{|\alpha|,M})(\alpha_d + 1)\pd{f_{\alpha+e_d}}{x_d} \right) \\
    + \sum_{k=1}^3 f_{\alpha-e_k} \odd{u_k}{t} &+ \sum_{k,d=1}^3 \pd{u_k}{x_d}
    \left(\theta f_{\alpha-e_k-e_d} + (1 - \delta_{|\alpha|,M})(\alpha_d + 1) f_{\alpha-e_k+e_d}
    \right) \\
    + \frac{1}{2} \sum_{k=1}^3 f_{\alpha-2e_k} \odd{\theta}{t} &+ \sum_{k,d=1}^3
    \frac{1}{2} \pd{\theta}{x_d} \left(
    \theta f_{\alpha-2e_k-e_d} + (1 - \delta_{|\alpha|,M})(\alpha_d + 1) f_{\alpha-2e_k+e_d}
    \right)\\
    &= -\frac{p}{\mu}f_{\alpha}\mathrm{H}(|\alpha|-2) , \quad |\alpha| \leq M.
  \end{aligned}
\end{equation}

Next, we try to derive the linearized system of
\eqref{eq:moment-system}. This requires us to examine the case that
the distribution function is in a small neighborhood of an equilibrium
state
\[
  f_0(\bxi) = \frac{\rho_0}{(2\pi\theta_0)^{\frac{3}{2}}} \mathrm{exp}
  \left( - \frac{|\bxi|^2}{2\theta_0}\right) ,
\]
given by $\rho_0, \theta_0, \bu = 0$. We introduce the dimensionless
variables $\bar{\rho}$, $\bar{\theta}$, $\bar{\bu}$, $\bar{p}$,
$\bar{p}_{ij}$ and $\bar{f}_{\alpha}$ as
\begin{equation}\label{eq:dimensionless}
  \begin{aligned}
    &\rho = \rho_0 (1 + \bar{\rho}),\quad \bu = \sqrt{\theta_0} \bar{\bu},
    \quad \theta = \theta_0 (1 + \bar{\theta}),
    \quad p = p_0 (1 + \bar{p}),\\
    &p_{ij}=p_0(\delta_{ij}+\bar{p}_{ij}),
    \quad f_{\alpha}=\rho_0\theta_0^{\frac{|\alpha|}{2}} \cdot \bar{f}_{\alpha},
    \quad \bx = L\cdot
    \bar{\bx},\quad t = \frac{L}{\sqrt{\theta_0}}\bar{t},
  \end{aligned}
\end{equation}
where $L$ is a characteristic length, $\bar{\bx}$ and $\bar{t}$ are
the dimensionless coordinates and time, respectively. Assume all the
dimensionless variables $\bar{\rho}$, $\bar{\theta}$, $\bar{\bu}$,
$\bar{p}$, $\bar{p}_{ij}$ and $\bar{f}_{\alpha}$ are small quantities.
Substituting \eqref{eq:dimensionless} into the globally hyperbolic
moment system \eqref{eq:moment-system}, and discarding all the
high-order small quantities, and noticing that $u_d \pd{\cdot}{x_d}$
is high-order small quantity, $\odd{\cdot}{t}\approx\pd{\cdot}{t}$, we
obtain the linearized HME as
\begin{equation}\label{eq:linear-system}
  \begin{aligned}
    &\pd{\bar{\rho}}{\bar{t}} + \sum_{d=1}^3\pd{\bar{u}_d}{\bar{x}_d} = 0,\\
    &\pd{\bar{u}_k}{\bar{t}} + \pd{\bar{p}}{\bar{x}_k} +
    \sum_{d=1}^3\pd{\bar{\sigma}_{kd}}{\bar{x}_d} = 0,\\
    &\pd{\bar{p}_{ij}}{\bar{t}} + \sum_{d=1}^3\delta_{ij}\pd{\bar{u}_d}{\bar{x}_d} + 
    \pd{\bar{u}_j}{\bar{x}_i} +
    \pd{\bar{u}_i}{\bar{x}_j} + \sum_{d=1}^3(e_i + e_j + e_d)! 
    \pd{\bar{f}_{e_i + e_j + e_d}}{\bar{x}_d} =
    -\frac{\bar{\sigma}_{ij}}{{\Kn}},\\
    &\begin{split}
    &\pd{\bar{f}_{\alpha}}{\bar{t}} + \sum_{d=1}^3\pd{\bar{f}_{\alpha - e_d}}{\bar{x}_d}
    + \sum_{d=1}^3(\alpha_d + 1)(1 - \delta_M)
    \pd{\bar{f}_{\alpha + e_d}}{\bar{x}_d} \\
	    &\qquad\qquad\qquad\qquad+ \sum_{d=1}^3\frac{1}{2}\delta_{\alpha,e_d+2e_k}
    \pd{\bar{\theta}}{\bar{x}_d}
    =-\frac{\bar{f}_{\alpha}}{{\Kn}},\quad 3 \leq |\alpha|\leq M,
    \end{split}
  \end{aligned}
\end{equation}
where $\bar{\sigma}_{ij}=\bar{p}_{ij}-\bar{p}\delta_{ij}$,
$i,j=1,2,3$, and $\delta_{\alpha, e_d+2e_k}$ is $1$ iff
$\alpha=e_d+2e_k$, otherwise is $0$. 
The Knudsen number $\Kn$ is defined by 
\begin{displaymath}
    \Kn = \frac{\lambda}{L},
\end{displaymath}
where $\lambda = \frac{\mu}{p_0}\sqrt{\theta_0}$ is the mean free
path.


\section{Boundary Condition}\label{sec:bc}
In this paper, we adopt Maxwell's accommodation boundary condition
\cite{Maxwell}, which is the most commonly used boundary condition in
gas kinetic theory. It is formulated as a linear combination of the
specular
reflection and the diffuse reflection. Wall boundary only requires the
incoming half of the distribution function when $\bxi \cdot \bn > 0$,
where $\bn$ is the unit normal vector pointing into the gas. With the
given velocity $\bu^W(t,\bx)$ and temperature $\theta^W(t,\bx)$ of the
wall, the boundary condition at the wall is
\begin{equation}\label{eq:Maxwell}
  f^W(t, \bx, \bxi) = 
  \left \{
  \begin{array}{ll}
    \chi f^W_M(t, \bx, \bxi) +(1 - \chi)f(t, \bx, \bxi^{\ast}),
    & \bC^W \cdot \bn > 0, \\
    f(t, \bx, \bxi), & \bC^W \cdot \bn \leq 0,
  \end{array}
  \right.
\end{equation}
where
\begin{equation}\label{eq:equilibrium}
  \begin{aligned}
    \bxi^{\ast} = \bxi - 2(\bC^W \cdot \bn)\bn, 
    \quad \bC^W = \bxi - \bu^W(t,\bx) ,\\ 
    f^W_M(t, \bx, \bxi) =
    \frac{\rho^W(t,\bx)}{(2\pi\theta^W(t,\bx))^{3/2}}
    \exp\left(-\frac{|\bxi -
      \bu^W(t,\bx)|^2}{2\theta^W(t,\bx)}\right),   
  \end{aligned}
\end{equation}
and $\chi \in [0,1]$ is the accommodation coefficient.

A boundary condition for general hyperbolic moment system was proposed
in \cite{Li}, which is derived from the Maxwell boundary condition by
calculating the expression of the moments at the wall. Here we are
purposely considering only steady shear flow, thus we adopt an
alternative approach to derive our boundary conditions. Let the unit
normal vector of the wall $\bn = (0,1,0)^T$. The velocity of the wall
$\bu^W = (u^W, 0, 0)$, and velocity for steady shear flow is
$\bu = (u_1, 0, 0)$. For $\bxi^{\ast} = (\xi_1,-\xi_2,\xi_3)$,
\eqref{eq:Maxwell} is precisely as
\begin{equation}\label{eq:wall-function}
  f^W(\bx,\bxi) = \left \{
  \begin{array}{ll}
    \chi f^W_M(\bx, \bxi) +(1 - \chi)f(\bx, \bxi^{\ast}), 
    & \xi_2 > 0, \\
    f(\bx, \bxi), & \xi_2 \leq 0.
  \end{array}
  \right .
\end{equation}
Denote $\Omega = \{\bxi \in \bbR^3\}$,
$\Omega^+ = \{\xi_1 \in \bbR, \xi_2 \in \bbR^+, \xi_3 \in \bbR \}$ and
$\Omega^- = \{\xi_1 \in \bbR, \xi_2 \in \bbR^-, \xi_3 \in \bbR \}$.
The integral of the wall distribution function
\eqref{eq:wall-function} with any function $\psi(\bC)$ gives us an
equation
\begin{equation}\label{eq:integral-equation}
  \begin{aligned}
    \int_{\Omega}\psi(\bC) f^W(\bx, \bxi) & \dd\bxi = 
    \int_{\Omega^-} \psi(\bC) f(\bx, \bxi) \dd\bxi \\ 
    &+\int_{\Omega^+}\psi(\bC)\left(\chi f_M^W(\bx, \bxi-\bu^W) + (1 -
    \chi) f(\bx, \bxi^{\ast})\right) \dd\bxi, 
  \end{aligned}
\end{equation}
where $\bC = (\xi_1-u_1, \xi_2, \xi_3)$. 

Definitely, for HME one has to restrict the form of function
$\psi(\bC)$, otherwise \eqref{eq:integral-equation} will produce too
many boundary conditions. It is clear that we should restrict
ourselves to those $\psi$'s that the moments in the equation can be
retrieved. Thus those $\psi$'s are polynomials as $\bC^{\beta}$,
$|\beta\leq M$, where $\beta = (\beta_1, \beta_2, \beta_3) \in
\bbN^3$ is a 3D multi-index. Moreover, the distribution function of
shear flow is an
even function in the $\xi_3$ direction, which leads to
$f_{\beta} = 0$, for $\beta_3$ is odd. Following Grad's theory
\cite{Grad} to limit the number of boundary condition in order to
ensure the continuity of boundary conditions when $\chi \to 0$, only a
subset of all the moments corresponding to 
\begin{equation}\label{set:bbI}
\{\bC^{\beta} \big| \beta \in \bbI \}, \qquad \text{where}\qquad
\bbI = \{|\beta| \leq M~\big|~\beta_2~\text{is odd and}~\beta_3
~\text{is even}\}
\end{equation}
can  be used to construct the wall boundary conditions. Then
we reformulate the equation \eqref{eq:integral-equation} as
\begin{equation}\label{eq:bc-xi2}
  \int_{\Omega^+}\bC^{\beta}f^W_M(\bx, \bxi-\bu^W) \dd\bxi =
  \frac{1}{\chi}\left( \int_{\Omega^+}\bC^{\beta}\left(f(\bx, \bxi) -
  (1-\chi)f(\bx, \bxi^{\ast})\right)\dd\bxi\right), 
  \quad \beta\in\bbI.
\end{equation}
Notice that the basis function defined in \eqref{eq:basis-fun} is
decoupled in compoents of $\bxi$. We then substitute
\eqref{eq:expansion} into \eqref{eq:bc-xi2} to calculate the integral
on both left and right hand side in \eqref{eq:bc-xi2},
respectively. To give the results, we first make some simplification
and define the following notations. Let 
\begin{equation}
	J_0(u,\theta) = 1,\quad 
	J_1(u,\theta) = u,\quad 
	J_{k+1}(u,\theta)= u J_k(u,\theta)+k\theta J_{k-1},
	k\geq 1,
\end{equation}
then
\[
  \frac{1}{\sqrt{2\pi\theta^W}}\int_{-\infty}^{\infty}(\xi_1 - u_1)^k
  \exp\left(-\frac{|\xi_1-u_1^W|^2}{2\theta^W}\right)\dd\xi_1
  =J_k(u_1^W-u_1,\theta^W).
\]
Let 
\[
    K(k,m) := \int_{-\infty}^{\infty}\frac{1}{\sqrt{2\pi}}\exp\left(
    -\frac{|\xi|^2}{2} \right)\xi^k \He_m(\xi)\dd\xi,
\]
where $\He_m(\xi)$ is $m$-th Hermite polynomial, then using the
orthogonal relation of the Hermite polynomials, one can find 
$K(0,m) = \delta_{0,m}$.
Denote the half space integral by
\begin{equation}
    S^\star (k,m) := \int_0^{\infty}\xi^k
    \He_m(\xi)\exp\left(-\frac{\xi^2}{2}\right)\dd\xi,
\end{equation}
and we have the following properties for $S^\star (k,m)$.
\begin{itemize}
\item Recursion relation:
  \begin{equation}\label{eq:rec}
    S^{\star}(k, m) = (k - 1) S^{\star}(k-2, m) + m S^{\star}(k-1, m-1).
  \end{equation}
\item The value of $S^\star (k,m)$ is:
  \begin{enumerate}
    \item If $m \leq k$:
      \begin{enumerate}
      \item If $k - m$ is even, $S^{\star}(k, m) = \sqrt{2 \pi}\cdot A$;
      \item If $k - m$ is odd, $S^{\star}(k, m) = B$;
      \end{enumerate}
      here $A$ and $B$ are two algebraic numbers.
    \item If $m > k$ and $k - m$ is even, $S^{\star}(k, m) = 0$.
  \end{enumerate}
\end{itemize}
Let
\begin{equation}\label{eq:integralS}
    \begin{aligned}
        S(k,m) &:= \frac{\hat{\chi}}{\sqrt{2\pi}} S^\star(k,m) \\
        &\ = \frac{\theta^{(m-k)/2}}{\chi}
        \int_0^{\infty}\xi^k\left(\He_m(\xi)-
        (1-\chi)\He_m(-\xi)\right)\exp\left( -\frac{|\xi|^2}{2} \right)\dd\xi,
    \end{aligned}
\end{equation}
where
\[
    \hat{\chi} = \left\{ 
        \begin{array}{ll}
            1, & m ~\text{is even},\\
            \frac{2-\chi}{\chi}, & m ~\text{is odd},
        \end{array} \right.
\]
then for each $\beta \in \bbI$ in \eqref{set:bbI}, the left and right
hand side of \eqref{eq:bc-xi2} can be represented by
\begin{equation}\label{eq:bc}
    \begin{aligned}
        \text{lhs of \eqref{eq:bc-xi2}}
        &=\frac{\rho^W \left(\theta^W\right)^{(\beta_2+\beta_3)/2}}{\sqrt{2\pi}}
        J_{\beta_1}\left(u_1^W-u_1,\theta^W\right)
        (\beta_2-1)!!(\beta_3-1)!!,\\
        \text{rhs of \eqref{eq:bc-xi2}} &= 
        \sum_{\alpha\in\bbN^3} \left(
        K(\beta_1,\alpha_1) 
        S(\beta_2,\alpha_2)
        K(\beta_3,\alpha_3)
        \theta^{(\beta_2-\alpha_2)/2}
        \right) f_{\alpha}.
    \end{aligned}
\end{equation}
Noticing $K(0,m)=\delta_{0,m}$, by setting $\beta=e_2$ in
\eqref{eq:bc}, we have
\begin{equation}\label{eq:e2bc}
  \rho^W\sqrt{\frac{\theta^W}{2\pi}} = \sum_{m=0}^{\infty}
  S(1,m)\frac{f_{me_2}}{{\theta}^{(m-1)/2}}.
\end{equation}
Let $p_{w} = \rho^W\sqrt{\theta^W \theta}$, then we have
\begin{equation}
    p_{w} =
    \sqrt{2\pi\theta}\sum_{m=0}^{\infty}S(1,m)\frac{f_{me_2}}{\theta^{(m-1)/2}}
    =p + f_{2e_2} - \frac{f_{4e_2}}{\theta} + \frac{3}{\theta^2}
    f_{6e_2} - \frac{15}{\theta^3}f_{8e_2} + \cdots.
\end{equation}
The boundary condition for the case $\beta = e_1+\beta_2e_2 \in \bbI$
in \eqref{set:bbI} is
\begin{equation}\label{eq:beta2oddbc}
  \frac{\rho^W}{\sqrt{2\pi}}(\theta^W)^{\frac{\beta_2}{2}}(u_1^W - u_1)
  (\beta_2 -1)!! = \sum_{\alpha_2} S(\beta_2, 
  \alpha_2) f_{e_1+ \alpha_2e_2} \theta^{(\beta_2-\alpha_2)/2}.
\end{equation}
Particularly, for the case $\beta = e_1+e_2$, one has
\[
  p_{w}\sqrt{\frac{\theta^W}{2\pi\theta}}(u_1^W - u_1) =
  S(1,1) \sigma_{12} + \sum_{\alpha_2 > 1}
  S(1,\alpha_2) f_{e_1+\alpha_2e_2}\theta^{(1-\alpha_2)/2}. 
\]

Here we only consider the boundary condition for the specific case
that $\beta = e_1+\beta_2e_2 \in \bbI$ in \eqref{eq:beta2oddbc},
which is
\begin{equation}\label{eq:bc-palpha}
    p_{w}
    \frac{\left(\theta^W\right)^{\frac{\beta_2-1}{2}}}{\sqrt{2\pi}}(\beta_2 -1)!!
    (u_1^W - u_1) = 
    \sum_{\alpha_2} \theta^{\frac{1+\beta_2-\alpha_2}{2}}
    S(\beta_2, \alpha_2) f_{e_1+\alpha_2e_2}.
\end{equation}
We linearize this condition at $\theta_0$ as that in
\eqref{eq:dimensionless} for our purpose, and assume
$\theta^W-\theta_0$ is a small quantity. By substituting
\eqref{eq:dimensionless} into \eqref{eq:bc-palpha}, and applying the
closure of HME, i.e $f_{\alpha}=0$, $|\alpha|>M$, the linearized
boundary condition is arrived at as
\begin{equation}\label{bc:linear}
  \frac{(\beta_2 -1)!!}{\sqrt{2\pi}}(\bar{u}_1^W - \bar{u}_1) = \sum_{\alpha_2\leq M}
  S(\beta_2,\alpha_2) \bar{f}_{e_1+\alpha_2e_2},
\end{equation}
where $\bar{u}_1^W$ is defined as dimensionless variable $u_1^W =
\sqrt{\theta_0}\bar{u}_1^W$, and $\beta_2$ is odd and $|\beta_2|\leq
M$. 




\section{Kramers' Problem}
\label{sec:kramers}
Our setup for Kramers' problem is standard. The gas flow in a
half-space over a flat wall is considered, and the coordinates are
chosen such that $x$ direction is parallel to the wall, and $y$
direction is perpendicular to the wall. The solid wall is fixed on
$\bar{y} = 0~(\bar{u}_1^W = 0)$.  The temperature and density of the
gas far from the wall are constant. Gas velocity is $\bar{\bu} =
(\bar{u}_1, 0, 0)$ and all derivatives in equations
\eqref{eq:linear-system} in $x$ and $z$ direction are zero. 

\subsection{Formal solution of linearized HME}
We give the formal solution of the linearized HME at first. The setup
of Kramers' problem makes the equations of linearized moment system
\eqref{eq:linear-system} related to velocity decoupled from the whole
linearized moment system, which enables us to investigate the velocity
by studying a small system as
\begin{equation}\label{eq:velocity}
  \begin{aligned}
    &\od{\bar{\sigma}_{12}}{\bar{y}} = 0, \\
    &\od{\bar{u}_1}{\bar{y}} + 2\od{\bar{f}_{e_1+2e_2}}{\bar{y}}
    = -\frac{1}{\Kn}\bar{\sigma}_{12}, \\
    &\od{\bar{\sigma}_{12}}{\bar{y}} +
    3\od{\bar{f}_{e_1+3e_2}}{\bar{y}} =
    -\frac{1}{\Kn}\bar{f}_{e_1+2e_2}, \\ 
    &\cdots \\
    &\od{\bar{f}_{e_1+ (M-2)e_2}}{\bar{y}}
    = -\frac{1}{\Kn}\bar{f}_{e_1+(M-1)e_2}. 
  \end{aligned}
\end{equation}
We collect the variables involved in \eqref{eq:velocity} into a vector
\[
    V = \left(\bar{u}_1, \bar{\sigma}_{12}, \bar{f}_{e_1+2e_2},
    \bar{f}_{e_1+3e_2},\cdots, \bar{f}_{e_1+(M-1)e_2}\right)^T,
\] 
and then \eqref{eq:velocity} is formulated as
\begin{equation}\label{eq:simple-velocity}
  \bM \od{V}{\bar{y}} = -\frac{1}{\Kn}\bQ V,
\end{equation}
where
\begin{equation}\label{eq:def_MQ}
\bM = \left(
  \begin{array}{cccccc}
    0 & 1 &        &        &        & \\
    1 & 0 & 2      &        &        & \\
      & 1 & 0      & 3      &        & \\
      &   & \ddots & \ddots & \ddots & \\
      &   &        & 1      & 0      & M - 1\\
      &   &        &        & 1      & 0
  \end{array}
  \right), \quad \bQ = \left(
    \begin{array}{cccc}
      0 &   &        &  \\
        & 1 &        &  \\
        &   & \ddots &  \\
        &   &        & 1
    \end{array}
  \right).
\end{equation}
Easy to check that the matrix $\bM$ is real diagonalizable.
Actually, we have the eigen-decomposition of $\bM$
as $\bM = \bR \bLambda \bR^{-1}$, where $\bR$ is the Hermite transformation matrix
\begin{equation}\label{mat:eigen-vec}
    \bR = (r_{ij})_{M\times M},\quad
    r_{ij}=\frac{\He_{i-1}(\lambda_j)}{(i-1)!},
    \quad i,j=1,\cdots,M,
\end{equation}
and $\bLambda = \diag\{\lambda_i; i = 1, \cdots, M\}$,
where the eigenvalues $\lambda_i$, $i=1,\cdots,M$ are zeros of the
$M$-th order Hermite polynomial $\He_M(x)$. We sort the eigenvalues
$\lambda_i$ in decending order, saying $\lambda_i >
\lambda_{i+1}$. The diagonal matrix $\bLambda$ can then be written as
\begin{equation}\label{mat:Lamb}
\bLambda = \left(
  \begin{array}{cc}
    \bLambda_+ & \\
    & \bLambda_{\leq 0}
  \end{array}
\right),
\end{equation}
\begin{equation}\label{mat:pos-neg}
\begin{aligned}
& \bLambda_+ = \diag \left\{\lambda_i;~  i =
  1, \cdots, \lfloor \frac{M}{2} \rfloor \right\}, \\
& \bLambda_{\leq 0} = \diag \left\{\lambda_i;~  i =
  \lfloor \frac{M}{2} \rfloor + 1, \cdots, M \right\}.
\end{aligned}
\end{equation}

The first equation of \eqref{eq:simple-velocity} indicates
$\bar{\sigma}_{12}$ is a constant and the second equation of
\eqref{eq:simple-velocity} gives that
\[
    \bar{u}_1(\bar{y}) = -\dfrac{\bar{y}}{\Kn} \bar{\sigma}_{12} -
    2\bar{f}_{e_1+2e_2}(\bar{y}) + c_0, 
\]
where $c_0$ is a constant to be determined. We denote
\[
\hat{V} = (\bar{f}_{e_1+2e_2}, \bar{f}_{e_1+3e_2}, \cdots,
\bar{f}_{e_1+(M-1)e_2})^T, 
\] 
which is the remaining part of $V$ excluded the first two variables
$\bar{u}_1$ and $\bar{\sigma}_{12}$. Then the system with higher order
moments is separated from \eqref{eq:simple-velocity}, which reads as
\begin{equation}\label{eq:matrix-hatM}
  \hat{\bM} \od{\hat{V}}{\bar{y}} = -\frac{1}{\Kn} \hat{V},
\end{equation}
where 
\[
  \hat{\bM} = \left(
  \begin{array}{cccccc}
    0 & 3 &        &        &        & \\
    1 & 0 & 4      &        &        & \\
      & 1 & 0      & 5      &        & \\
      &   & \ddots & \ddots & \ddots & \\
      &   &        & 1      & 0      & M - 1\\
      &   &        &        & 1      & 0
  \end{array}
  \right).
\]

Correspondingly to the matrix $\bM$, the matrix $\hat{\bM}$ is real
diagonalizable, too.  Precisely, let
\[
    \hat{\He}_0(x) = 1,~\hat{\He}_1(x)=x,~
    \hat{\He}_{k+1}(x)=x\hat{\He}_k(x)-(k+2)\hat{\He}_{k-1}(x),
    ~k\geq1,
\]
then the characteristic polynomial of $\hat{\bM}$ is
$\hat{\He}_{M-2}(\lambda)$. The recursion relation implies that
$\hat{\He}_k(x)$ has $k$ real and simple zeros, and thus $\hat{\bM}$
is real diagonalizable and the eigenvalues $\hat{\lambda}_i$,
$i = 1, \cdots, M-2$, are the zeros of $\hat{\He}_{M-2}
(\lambda)$.
Furthermore, if $\hat{\lambda}_i$ is an eigenvalue of $\hat{\bM}$,
then $-\hat{\lambda}_i$ has to be an eigenvalue of $\hat{\bM}$, since
$\hat{\He}_{M-2}(x)$ is an odd function if $M$ is odd and is an even
function if $M$ is even. As for the matrix $\bM$, we sort the
eigenvalues $\hat{\lambda}_i$ in decending order, too, to make the
diagonal matrix
\[
\hat{\bLambda} = \left(
  \begin{array}{cc}
    \hat{\bLambda}_+ & \\
    & \hat{\bLambda}_{\leq 0}
  \end{array}
  \right),
\]
\[
\begin{aligned}
& \hat{\bLambda}_+ = \diag \left\{ \hat{\lambda}_i;~ i = 1, \cdots,
  \lfloor \frac{M}{2} \rfloor - 1 \right\}, \\
& \hat{\bLambda}_{\leq 0} = \diag \left\{ \hat{\lambda}_i;~ i = 
  \lfloor \frac{M}{2} \rfloor, \cdots, M - 2 \right\}.
\end{aligned}
\]
Then eigen-decomposition of $\hat{\bM}$ is
$\hat{\bM} = \hat{\bR} \hat{\bLambda} \hat{\bR}^{-1}$, where
\begin{equation}\label{eq:def_hatbR}
    \hat{\bR}=(\hat{r}_{ij})_{(M-2)\times(M-2)},\quad 
    \hat{r}_{ij}=\frac{\hat{\He}_{i-1}(\hat{\lambda}_j)}{(i+1)!},
    \quad i,j=1,\cdots,M-2.
\end{equation}
Let us define the matrix $\hat{\bR}_+$ as the left
$\lfloor \dfrac{M}{2} \rfloor - 1$ colomns of $\hat{\bR}$,
$\hat{\bR}_-$ as the right $\lceil \dfrac{M}{2} \rceil - 1$ colomns of
$\hat{\bR}$ for latter usage, and precisely, we have
\[
\hat{\bR}_+ = \left(    
  \begin{array}{ccc}
    \frac{\hat{\He}_0(\hat{\lambda}_1)}{2!} & \cdots 
    & \frac{\hat{\He}_0(\hat{\lambda}_{\lfloor \frac{M}{2} \rfloor - 1})}{2!} \\
    \vdots & \ddots & \vdots \\
    \frac{\hat{\He}_{M-3}(\hat{\lambda}_1)}{(M-1)!} & \cdots 
    & \frac{\hat{\He}_{M-3}(\hat{\lambda}_{\lfloor \frac{M}{2} \rfloor - 1})}{(M-1)!}
  \end{array}\right), \quad
  \hat{\bR}_- = \left(    
  \begin{array}{ccc}
    \frac{\hat{\He}_0(\hat{\lambda}_{\lfloor \frac{M}{2} \rfloor})}{2!} & \cdots 
    & \frac{\hat{\He}_0(\hat{\lambda}_{M-2})}{2!} \\
    \vdots & \ddots & \vdots \\
    \frac{\hat{\He}_{M-3}(\hat{\lambda}_{\lfloor \frac{M}{2} \rfloor})}{(M-1)!} & \cdots 
    & \frac{\hat{\He}_{M-3}(\hat{\lambda}_{M-2})}{(M-1)!}
  \end{array}\right).
\]
Let $\hat{\bR}_{+,\text{even}}$, which is made with the even rows of
$\hat{\bR}_+$ as
\[
\begin{aligned}
  \hat{\bR}_{+,\text{even}} &\triangleq
  (\hat{r}_{ij}), \text{~where~} i \text{~is
    even,~} j = 1,\cdots,\lfloor \frac{M}{2} \rfloor - 1, \\
  &=\left(
    \begin{array}{cccc}
      \dfrac{\hat{\He}_1(\hat{\lambda}_1)}{3!} 
      & \dfrac{\hat{\He}_1(\hat{\lambda}_2)}{3!}
      & \cdots
      & \dfrac{\hat{\He}_1(\hat{\lambda}_{\lfloor \frac{M}{2} \rfloor - 1})}{3!} \\
      \dfrac{\hat{\He}_3(\hat{\lambda}_1)}{5!} 
      & \dfrac{\hat{\He}_3(\hat{\lambda}_2)}{5!}
      & \cdots
      & \dfrac{\hat{\He}_3(\hat{\lambda}_{\lfloor \frac{M}{2} \rfloor - 1})}{5!} \\
      \vdots & \vdots & \cdots & \vdots \\
    \end{array}
  \right),
\end{aligned}
\]    
be a $\lfloor \frac{M}{2} \rfloor - 1 \times \lfloor \frac{M}{2}
\rfloor - 1$ square matrix. And corresponding to
$\hat{\bR}_{+,\text{even}}$, define $\hat{\bR}_{+,\text{odd}},
\hat{\bR}_{-,\text{odd}}, \hat{\bR}_{-,\text{odd}}$ as
\[
\begin{aligned}
  \hat{\bR}_{+,\text{odd}} &\triangleq
  (\hat{r}_{ij}), \text{~where~} i \text{~is
    odd,~} j = 1,\cdots,\lfloor \frac{M}{2} \rfloor - 1, \\
  \hat{\bR}_{-,\text{even}} &\triangleq
  (\hat{r}_{ij}), \text{~where~} i \text{~is
    even,~} j = \lfloor \frac{M}{2} \rfloor,\cdots,M-2, \\
  \hat{\bR}_{-,\text{odd}} &\triangleq
  (\hat{r}_{ij}), \text{~where~} i \text{~is
    odd,~} j = \lfloor \frac{M}{2} \rfloor,\cdots,M-2.
\end{aligned}
\]
We declare that
\begin{lemma}\label{lem:R_peven_invertible}
  $\hat{\bR}_{+,\text{even}}$ is invertible.
\end{lemma}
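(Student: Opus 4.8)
The plan is to show that the $m\times m$ matrix $\hat{\bR}_{+,\text{even}}$, with $m:=\lfloor M/2\rfloor-1$, has linearly independent rows. First I would settle the dimension bookkeeping. By the recursion, $\hat{\He}_k$ is monic of degree $k$ and has the parity of $k$; combined with the fact (already established) that its zeros are real and simple, this shows $0$ is a zero of $\hat{\He}_{M-2}$ exactly when $M$ is odd, and in either parity the number of \emph{positive} zeros is exactly $m=\lfloor M/2\rfloor-1$. Since the $\hat{\lambda}_i$ are sorted in descending order, the columns of $\hat{\bR}_+$ are precisely those indexed by the positive eigenvalues $\hat{\lambda}_1>\cdots>\hat{\lambda}_m>0$. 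Likewise, the even-indexed rows of the $(M-2)\times(M-2)$ matrix $\hat{\bR}$ number exactly $m$, so $\hat{\bR}_{+,\text{even}}$ is genuinely square, with $(k,j)$-entry $\hat{\He}_{2k-1}(\hat{\lambda}_j)/(2k+1)!$ for $k,j=1,\dots,m$.

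Next I would test a vanishing linear combination of rows. Suppose $c_1,\dots,c_m$ satisfy $\sum_{k=1}^m c_k\,\hat{\He}_{2k-1}(\hat{\lambda}_j)/(2k+1)!=0$ for every $j=1,\dots,m$. Absorbing the factorials, set $\tilde c_k:=c_k/(2k+1)!$ and $P(x):=\sum_{k=1}^m \tilde c_k\,\hat{\He}_{2k-1}(x)$. Then $P$ vanishes at the $m$ points $\hat{\lambda}_1,\dots,\hat{\lambda}_m$; since each $\hat{\He}_{2k-1}$ is an odd polynomial, $P$ is odd, hence also vanishes at $0$ and at $-\hat{\lambda}_1,\dots,-\hat{\lambda}_m$, giving $2m+1$ distinct roots. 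But $\deg P\le 2m-1$, so $P\equiv 0$. Because $\hat{\He}_1,\hat{\He}_3,\dots,\hat{\He}_{2m-1}$ are monic of pairwise distinct degrees they are linearly independent, so $\tilde c_k=0$, hence all $c_k=0$. Thus the $m$ rows are independent and $\hat{\bR}_{+,\text{even}}$ is invertible.

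An equivalent Vandermonde-style phrasing, which I would mention as a remark, writes $\hat{\He}_{2k-1}(x)=x\,g_{k-1}(x^2)$ with $g_{k-1}$ monic of degree $k-1$, so that $\hat{\bR}_{+,\text{even}}=D_1\,\big(g_{k-1}(\hat{\lambda}_j^2)\big)_{k,j}\,D_2$ with $D_1=\diag(1/3!,\dots,1/(2m+1)!)$ and $D_2=\diag(\hat{\lambda}_1,\dots,\hat{\lambda}_m)$ both invertible (no $\hat{\lambda}_j$ is zero); the middle factor is a generalized Vandermonde matrix in the \emph{distinct} positive nodes $\hat{\lambda}_j^2$ and is invertible since the $g_{k-1}$ have distinct degrees. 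I expect the only delicate point to be the dimension count — verifying that both "even rows of $\hat{\bR}$" and "positive eigenvalues of $\hat{\bM}$" have cardinality exactly $\lfloor M/2\rfloor-1$, which needs the parity case split above; the algebraic core (a nonzero odd polynomial of degree $\le 2m-1$ cannot vanish at $2m+1$ symmetric points) is immediate.
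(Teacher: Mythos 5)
Your proof is correct, and it reaches the conclusion by a genuinely different (though closely related) route. You argue on the \emph{rows}: a vanishing linear combination of the even rows is a nonzero odd polynomial $P=\sum_k \tilde c_k\,\hat{\He}_{2k-1}$ of degree at most $2m-1$ that would vanish at the $m$ positive nodes $\hat{\lambda}_j$, hence by oddness at $0$ and at $-\hat{\lambda}_j$ as well, i.e.\ at $2m+1$ distinct points --- forcing $P\equiv 0$ and then $\tilde c_k=0$ since the $\hat{\He}_{2k-1}$ have distinct degrees. The paper instead argues on the \emph{columns}: it pairs each positive eigenvalue $\hat{\lambda}_i$ with $-\hat{\lambda}_i$, notes that the corresponding eigenvectors differ only by a sign flip on the even-indexed entries, and observes that the differences $\hat{\br}_i-\hat{\br}_j=2(\hat{\br}_{i,\text{even}}\,|\,\boldsymbol{0})^T$ inherit linear independence from the eigenvector basis. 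Both proofs exploit the same underlying parity symmetry of $\hat{\He}_k$ and of the spectrum, but your key lemma is a root-count for an odd polynomial while the paper's is linear independence of eigenvectors; yours is arguably more self-contained (it needs only that the positive zeros are distinct, not the full eigen-decomposition), and your Vandermonde-style factorization $\hat{\bR}_{+,\text{even}}=D_1\bigl(g_{k-1}(\hat{\lambda}_j^2)\bigr)_{k,j}D_2$ additionally yields an explicit determinant structure. You are also right that the dimension bookkeeping (both the number of positive zeros of $\hat{\He}_{M-2}$ and the number of even rows equal $\lfloor M/2\rfloor-1$ in either parity of $M$) deserves the explicit check you give --- the paper passes over it silently.
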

\begin{proof}
  Let $\boldsymbol{P}_\sigma$ to be the permutation matrix of the
  permutation
  \begin{equation}\label{mat:P_sigma}
    \sigma: \left\{ 1, 2, \cdots , M-2 \right\} \rightarrow \left\{ 1,
      2, \cdots , M-2 \right\},
  \end{equation}
  that 
  \[
  \sigma(i) = \mod(i,2) \times (\lfloor \frac{M}{2} \rfloor - 1) +
  \lfloor i/2 \rfloor. 
  \]
  The permutation maps the list of numbers $1, 2, \cdots, M -2 $ to
  \[
  2, 4, 6, \cdots, 1, 3, 5, \cdots
  \]
  that the even numbers are ahead of the odd numbers.
  Then matrix $\hat{\bR}$ is re-organized by the
  permutation matrix as
  \[
  \boldsymbol{P}^{-1}_\sigma \hat{\bR} = \left(
    \begin{array}{c|c}
      \hat{\bR}_{+,\text{even}} & \hat{\bR}_{-,\text{even}}\\
      \hat{\bR}_{+,\text{odd}}  & \hat{\bR}_{-,\text{odd}}
    \end{array}
  \right).
  \]
  Notice that each eigenvalue $\hat{\lambda}_i \in \hat{\bLambda}_+$,
  $-\hat{\lambda}_i \in \hat{\bLambda}_{\leq 0}$. Then for any
  eigenvector
  \[
  \hat{\br}_i = (\hat{\br}_{i,\text{even}} |
  \hat{\br}_{i,\text{odd}})^T \in (\hat{\bR}_{+,\text{even}} |
  \hat{\bR}_{+,\text{odd}})^T,
  \]
  there exists a column vector
  \[
  \hat{\br}_j = (-\hat{\br}_{i,\text{even}} |
  \hat{\br}_{i,\text{odd}})^T \in (\hat{\bR}_{-,\text{even}} |
  \hat{\bR}_{-,\text{odd}})^T.
  \]
  Then
  \[
    \hat{\br}_i - \hat{\br}_j = 2(\hat{\br}_{i,\text{even}} |
    \boldsymbol{0})^T.
  \]
  The set of vectors $\hat{\br}_i - \hat{\br}_j$ are linearly
  independent since $\hat{\br}_i, \hat{\br}_j$ are eigenvectors of
  $\hat{\bR}$, then 
  columns of $\hat{\bR}_{+,\text{even}}$ are linearly independent.
  Thus $\hat{\bR}_{+,\text{even}}$ is invertible. 
\end{proof}

\subsubsection{Illustrative examples: $M \leq 5$}
We examine the cases for small $M$ to find out the formal solution for
generic $M$. The simplest system is the case for $M = 3$.  The
variables are $V = (\bar{u}_1, \bar{\sigma}_{12},
\bar{f}_{e_1+2e_2})^T$, and matrices $\bM$ and $\bQ$ in
\eqref{eq:simple-velocity} are 
\[
  \bM =\left(
  \begin{array}{ccc}
    0 & 1 & 0 \\
    1 & 0 & 2\\
    0 & 1 & 0
  \end{array} \right),
  \quad
  \bQ = \left(
  \begin{array}{ccc}
    0 &   &  \\
      & 1 &  \\
      &   & 1
  \end{array} \right).
\]
Since $\bar{\sigma}_{12}$ is constant and the velocity is
\[
    \bar{u}_1 = -\bar{\sigma}_{12}\frac{\bar{y}}{\Kn} + c_0,
\]
it is clear that the solution of $\bar{u}_1$ is not able to capture
the boundary layer since here $\bar{u}_1$ is a linear function of
$\bar{y}$. To capture the boundary layer of velocity, we need more
moments thus we turn to the case $M = 4$. For $M = 4$, the equations
\eqref{eq:velocity} are
\begin{equation}\label{eq:vel-M4}
  \begin{aligned}
    &\od{\bar{\sigma}_{12}}{\bar{y}} = 0, \\
    &\od{\bar{u}_1}{\bar{y}} + 2\od{\bar{f}_{e_1+2e_2}}{\bar{y}}
    = -\frac{1}{\Kn}\bar{\sigma}_{12}, \\
    &\od{\bar{\sigma}_{12}}{\bar{y}} +
    3\od{\bar{f}_{e_1+3e_2}}{\bar{y}} = -\frac{1}{\Kn}\bar{f}_{e_1+2e_2}, \\
    &\od{\bar{f}_{e_1+2e_2}}{\bar{y}} = -\frac{1}{\Kn}\bar{f}_{e_1+3e_2}. \\
  \end{aligned}
\end{equation}
The solution gives us the expression of velocity as
\begin{equation} \label{eq:velo-ori}
  \bar{u}_1 = - \bar{\sigma}_{12}\frac{\bar{y}}{\Kn} - 2 \bar{f}_{e_1+2e_2} + c_0
\end{equation}
from the second equation in \eqref{eq:vel-M4}. Here we need to solve the equations
\eqref{eq:matrix-hatM} for $\hat{V} = (\bar{f}_{e_1+2e_2}, \bar{f}_{e_1+3e_2})^T$,
where
\begin{equation}\label{mat:tridiagonal}
  \hat{\bM} = \left(
  \begin{array}{cc}
    0 & 3\\
    1 & 0
  \end{array}
  \right ).
\end{equation}
The matrix $\hat{\bM}$ can be decomposited as
$\hat{\bM} = \hat{\bR} \hat{\bLambda} \hat{\bR}^{-1}$,
\[
    \hat{\bLambda} = \left( 
    \begin{array}{cc}
        \sqrt{3} &  \\
                 & -\sqrt{3}
    \end{array} \right), \qquad
    \hat{\bR} = \left(
    \begin{array}{cc}
        1                  & 1 \\
        \frac{1}{\sqrt{3}} & -\frac{1}{\sqrt{3}}
    \end{array} \right).
\]
Hence, the solution of system \eqref{eq:matrix-hatM} is
\begin{equation*}
  \hat{V} = \hat{\bR} \exp \left(-\frac{\bar{y}}{\Kn} \hat{\bLambda}^{-1} \right)
  \hat{\bR}^{-1} \hat{V}^{(0)}.
\end{equation*}
By setting
$\hat{\bc}= (\hat{c}_1, \hat{c}_2)^T = \hat{\bR}^{-1}\hat{V}^{(0)}$,
the equations above result in
\begin{equation*}
    \hat{V}=\left(\begin{array}{l}
        \bar{f}_{e_1+2e_2}\\
        \bar{f}_{e_1+3e_2}
    \end{array}\right)
    =\hat{\bR} \exp \left(-\frac{\bar{y}}{\Kn} \hat{\bLambda}^{-1}
    \right)\hat{\bc}
    =
    \left( \begin{array}{l}
        \hat{c}_1 \exp(-\frac{\bar{y}}{\sqrt{3}\Kn}) +
        \hat{c}_2\exp(\frac{\bar{y}}{\sqrt{3}\Kn})\\ 
        \frac{\sqrt{3}}{3} \hat{c}_1
        \exp(-\frac{\bar{y}}{\sqrt{3}\Kn}) - \frac{\sqrt{3}}{3} \hat{c}_2 
        \exp(\frac{\bar{y}}{\sqrt{3}\Kn})
    \end{array} \right).
\end{equation*}
The exponential terms provide us the boundary layer. Since all the
variables have to remain finite as $\bar{y} \to \infty$, the term
$\exp(\frac{1}{\sqrt{3}\Kn}\bar{y})$ has to be dropped. Therefore,
\[
   \left(\begin{array}{l}
        \bar{f}_{e_1+2e_2}\\
        \bar{f}_{e_1+3e_2}
    \end{array}\right)
    =\hat{\bR} \left(
      \begin{array}{cc}
        \exp \left(-\frac{\bar{y}}{\Kn} \hat{\bLambda}_+^{-1} \right) 
        & \\ & \bzero 
      \end{array}
    \right) \hat{\bc}
    = \hat{c}_1 \exp(-\frac{\bar{y}}{\sqrt{3}\Kn}) \left( 
      \begin{array}{c}
        1 \\ 
        \frac{\sqrt{3}}{3}
      \end{array} 
    \right).
\]
Here $\hat{\bLambda}_{+}$ is a $1 \times 1$ matrix with its entry as
$\sqrt{3}$.  Applying the linearized boundary condition
\eqref{bc:linear}, i.e.
\begin{equation*}
  \begin{aligned}
    &- \frac{1}{\sqrt{2\pi}} \bar{u}_1 = S(1,1) \bar{\sigma}_{12} + S(1,2)
    \bar{f}_{e_1+2e_2} + S(1,3)\bar{f}_{e_1+3e_2}, \\
    &- \frac{2}{\sqrt{2\pi}} \bar{u}_1 = S(3,1) \bar{\sigma}_{12} + S(3,2)
    \bar{f}_{e_1+2e_2} + S(3,3)\bar{f}_{e_1+3e_2},
  \end{aligned}
\end{equation*}
we can obtain 
\begin{equation*}
\hat{c}_1 = -\frac{\sqrt{\pi}(\chi - 2)}{2(\sqrt{3\pi}(2 -
    \chi) + 2\sqrt{2}\chi)} \bar{\sigma}_{12}, \quad c_0 =
  \sqrt{\frac{\pi}{2}} \frac{\chi - 2}{\chi}\left( 1 +
  \frac{\sqrt{2}\chi}{4\sqrt{2}\chi + 2\sqrt{3\pi}(2 - \chi)}\right)
  \bar{\sigma}_{12}.  
\end{equation*}
Then the solution of velocity is 
\begin{equation*}
  \bar{u}_1 = - \bar{\sigma}_{12}\frac{\bar{y}}{\Kn}
  -2\hat{c}_1\exp(-\frac{\bar{y}}{\sqrt{3}\Kn}) + c_0.
\end{equation*}

Similar procedure can be carried out for greater $M$. For example, if
we set $M = 5$, then
$V = (\bar{u}_1, \bar{\sigma}_{12}, \bar{f}_{e_1+2e_2},
\bar{f}_{e_1+3e_2}, \bar{f}_{e_1+4e_2})^T$ 
and $\hat{V} = (\bar{f}_{e_1+2e_2}, \bar{f}_{e_1+3e_2}, \bar{f}_{e_1+4e_2})^T$. The
matrix $\hat{\bM} = \hat{\bR} \hat{\bLambda}\hat{\bR}^{-1}$ in
\eqref{eq:matrix-hatM} is
\begin{equation*}
    \hat{\bM} = \left(
    \begin{array}{ccc}
        0 & 3 & 0 \\
        1 & 0 & 4 \\
        0 & 1 & 0
    \end{array}
    \right ) \text{ with } 
    \hat{\bLambda} = \left(
    \begin{array}{ccc}
        \sqrt{7} &   &  \\
                 & 0 &  \\
                 &   & -\sqrt{7}
    \end{array} \right),
  ~~
    \hat{\bR} = \left(
    \begin{array}{ccc}
        1          & 1    & 1           \\
        \sqrt{7}/3 & 0    & -\sqrt{7}/3 \\
        1/3        & -1/4 & 1/3
    \end{array} \right).
\end{equation*}
Notice that $M = 5$ is odd, so zero is a simple eigenvalue of
$\hat{\bM}$ . This vanished eigenvalue provides a constant factor in
the exponential terms in the boundary layer, while the eigenvalue
$\sqrt{7}$ of matrix $\hat{\bM}$ provides the only stable term which
survives in the solution. The solution of \eqref{eq:matrix-hatM} is
\begin{equation}
    \hat{V}=
    \left(\begin{array}{l}
        \bar{f}_{e_1+2e_2}\\
        \bar{f}_{e_1+3e_2}\\
        \bar{f}_{e_1+4e_2}\\
    \end{array}\right)
    =\hat{\bR} \left(
      \begin{array}{cc}
        \exp \left(-\frac{\bar{y}}{\Kn} \hat{\bLambda}_+^{-1} \right) 
        & \\ & \bzero 
      \end{array}
    \right) \hat{\bc}
    = \hat{c}_1 \exp(-\frac{\bar{y}}{\sqrt{7}\Kn})
    \left( \begin{array}{c}
        1 \\ 
        \frac{\sqrt{7}}{3} \\
        \frac{1}{3}
    \end{array} \right),
\end{equation}
where
$\hat{\bc}=(\hat{c}_1, \hat{c}_2, \hat{c}_3)^T =
\hat{\bR}^{-1}\hat{V}^{(0)}$
and the entry of the $1 \times 1$ matrix $\hat{\bLambda}_+$ is
$\sqrt{7}$.  Similar as the case $M = 4$, there are $2$ coefficients
$c_0$ and $\hat{c}_1$ to be determined.  To fix the coefficients, we
utilize two boundary conditions by setting $\beta_2 = 1,3$ in
\eqref{bc:linear}
\begin{equation*}
  \begin{aligned}
    &- \frac{1}{\sqrt{2\pi}} \bar{u}_1 = S(1,1) \bar{\sigma}_{12} + S(1,2)
    \bar{f}_{e_1+2e_2} + S(1,3)\bar{f}_{e_1+3e_2} + S(1,4)\bar{f}_{e_1+4e_2} , \\
    &- \frac{2}{\sqrt{2\pi}} \bar{u}_1 = S(3,1) \bar{\sigma}_{12} + S(3,2)
    \bar{f}_{e_1+2e_2} + S(3,3)\bar{f}_{e_1+3e_2} + S(3,4)\bar{f}_{e_1+4e_2}.
  \end{aligned}
\end{equation*}
Direct calculations yield
\begin{equation*}
    \hat{c}_1 = -\frac{3\sqrt{\pi}(\chi - 2)}{2(3\sqrt{7\pi}(\chi -
    2) - 10\sqrt{2}\chi)} \bar{\sigma}_{12}, \quad c_0 = \sqrt{\frac{\pi}{2}}
    \frac{\chi - 2}{\chi}\left(1 -
    \frac{2\sqrt{2}\chi}{3\sqrt{7\pi}(\chi - 2) - 10\sqrt{2}\chi}\right)
    \bar{\sigma}_{12}. 
\end{equation*}
Then the solution of velocity is given by
\begin{equation*}
    \bar{u}_1 = -\bar{\sigma}_{12}\frac{\bar{y}}{\Kn} - 2\hat{c}_1
    \exp\left(-\frac{\bar{y}}{\sqrt{7}\Kn}\right) + c_0.
\end{equation*}

\subsubsection{General case: arbitrary $M$}
Now we are ready to present the formal solution for arbitrary
$M$. Following the examples above, we have to drop those unbounded
factors to attain a stable solution that only the terms contributed
from the positive eigenvalues of $\hat{\bM}$ are kept. Thus the stable
solution of \eqref{eq:matrix-hatM} is
\begin{equation}\label{sol:general}
  \hat{V}(\bar{y}) = \hat{\bR} \left(
    \begin{array}{cc}
      \exp \left(-\frac{\bar{y}}{\Kn} \hat{\bLambda}_{+}^{-1}\right) 
      & \\ & \bzero
    \end{array}
  \right)
  \hat{\bc},
\end{equation}
where
$\hat{\bc}=(\hat{c}_1, \cdots, \hat{c}_{M-2})^T = \hat{\bR}^{-1}
\hat{V}^{(0)}$.
Clearly, there are only the beginning $\lfloor \dfrac{M}{2} \rfloor - 1$
entries in $\hat{\bc}$ appears in $\hat{V}(\bar{y})$. With the expression of
$\bar{f}_{e_1+2e_2}(\bar{y})$ provided as the first entry of
$\hat{V}(\bar{y})$, the velocity is again given by the second equation
in \eqref{eq:velocity} as
\begin{equation}\label{sol:velocity}
  \bar{u}_1(\bar{y}) = - \bar{\sigma}_{12}\frac{\bar{y}}{\Kn} - 2
  \be_1^T\hat{V}(\bar{y}) + c_0, 
\end{equation}
where $\be_1 = (1, 0, \cdots, 0)^T$.  Since $c_0$ in the expression of
$\bar{u}_1(\bar{y})$ is also to be determined, there are in total
$\lfloor \dfrac{M}{2} \rfloor$ indeterminate coefficients in $V(\bar{y})$.

Combining \eqref{sol:general} and \eqref{sol:velocity} with linearized
boundary condition \eqref{bc:linear}, we can obtain the boundary
condition for \eqref{eq:matrix-hatM} as
\begin{equation}\label{eq:simple-bc}
    -\frac{(\beta_2 -1)!!}{\sqrt{2\pi}} \bar{u}_1 = S(\beta_2,1)\bar{\sigma}_{12} + 
    \sum_{\alpha_2=2}^{M-1} S(\beta_2,\alpha_2) \bar{f}_{e_1+\alpha_2e_2},
    \quad \beta_2=1,3,\cdots,2\lfloor\frac{M}{2}\rfloor-1.
\end{equation}
The total number of boundary condition is $\lfloor\frac{M}{2}\rfloor$,
which may fix all coefficients in the solution of $V(\bar{y})$.  Once these
coefficients are fixed by the boundary conditions, we eventually
attain $\bar{u}_1$ formated as
\begin{equation}\label{eq:sol_u1_formal}
  \begin{aligned}
    \bar{u}_1(\bar{y}) &= - \bar{\sigma}_{12}\frac{\bar{y}}{\Kn} - 2
    \be_1^T\hat{V}(\bar{y}) + c_0 \\ 
    &= - \bar{\sigma}_{12}\frac{\bar{y}}{\Kn} - 2 \be_1^T \hat{\bR} \left(
      \begin{array}{cc}
        \exp \left(-\frac{\bar{y}}{\Kn} \hat{\bLambda}_{+}^{-1}\right) 
        & \\ & \bzero
      \end{array}
    \right)
    \hat{\bc} + c_0\\
    &= - \bar{\sigma}_{12}\frac{\bar{y}}{\Kn} - 2\sum_{i = 1}^{\lfloor
      \frac{M-2}{2} \rfloor}\hat{c_i}
    \exp\left(-\frac{\bar{y}}{\Kn\hat{\lambda}_i}\right) +c_0.
    \end{aligned}
\end{equation}
We let $\bar{y} = 0$ in \eqref{eq:sol_u1_formal} to have
$\bar{u}_1 = -2 \sum_{i = 1}^{\lfloor \frac{M}{2} \rfloor -1}\hat{c_i} +
c_0$
and substitute it into \eqref{eq:simple-bc} to obtain the following
linear system
\begin{equation}\label{sol:linear}
\begin{aligned}
  & \quad \qquad - \frac{-2 \displaystyle \sum_{i = 1}^{\lfloor
      \frac{M}{2} \rfloor-1}\hat{c_i} + c_0}{\sqrt{2\pi}}\left(
  \begin{array}{c}
    1 \\
    (3 - 1)!!\\
    \vdots\\
    (2\lfloor \frac{M}{2} \rfloor - 2)!!
  \end{array}\right) = \bar{\sigma}_{12} \left(
  \begin{array}{c}
    S(1,1) \\ (S(3,1) \\ \vdots \\ 
    S(2\lfloor \frac{M}{2} \rfloor -1,1) 
  \end{array}
\right) \\
& + \left( \begin{array}{cccc}
  S(1,2) & S(1,3) &\cdots & S(1,M-1) \\
  S(3,2) & S(3,3) &\cdots & S(3,M-1) \\
  \vdots & \vdots &\ddots & \vdots \\
  S(2\lfloor \frac{M}{2} \rfloor -1,2) 
         & S(2\lfloor \frac{M}{2} \rfloor -1,3) 
                  & \cdots & S(2\lfloor \frac{M}{2} \rfloor -1,M-1)
\end{array} \right) 
\hat{\bR} \left(
    \begin{array}{c}
      \hat{c}_1 \\ \vdots \\ \hat{c}_{\lfloor \frac{M}{2} \rfloor - 1}
      \\ 0 \\ \vdots \\ 0
    \end{array}
  \right)
\end{aligned}
\end{equation}
Clearly, this is a linear system of for
$\bc = (c_0, \hat{c}_1, \cdots, \hat{c}_{\lfloor \frac{M}{2}
  \rfloor-1})^T$. Precisely, we let 
\[
\bh = \frac{1}{\sqrt{2\pi}} \left(
      \begin{array}{c}
        1 \\
        (3 - 1)!!\\
        \vdots\\
        (2\lfloor \frac{M}{2} \rfloor - 2)!!
      \end{array}\right), 
\qquad \bb = \left(
    \begin{array}{c}
      S(1,1)\\
      S(3,1)\\
      \vdots\\
      S(2\lfloor \frac{M}{2} \rfloor-1,1)
    \end{array} \right),
\]
\[
\bS = \left( 
      \begin{array}{cccc}
        S(1,2) & S(1,3) & \cdots & S(1,M-1) \\
        S(3,2) & S(3,3) & \cdots & S(3,M-1) \\
        \vdots & \vdots & \ddots & \vdots   \\
        S(2\lfloor \frac{M}{2} \rfloor-1 ,2) 
               & S(2\lfloor \frac{M}{2} \rfloor-1, 3) 
                        & \cdots & S(2\lfloor \frac{M}{2} \rfloor-1, M-1)
      \end{array} \right),
\]
then the system \eqref{sol:linear} is formulated as
\begin{equation}\label{sol:linear1}
\bA \bc = -\sigma_{12} \bb,
\end{equation}
where $\bA = \left( \bh, (\bS - 2 \bh \be_1^T)\hat{\bR}_+ \right)$.

To fix the parameters in $\bc$, the unique solvability of
\eqref{sol:linear1} is required. Currently, we can only claim the
system \eqref{sol:linear1} is uniquely solvable when $\chi$ is an
algebraic number. Precisely, we have the following theorem:
\begin{theorem}
  $\left| \bA \right| \neq 0$ if $\chi$ is an algebraic number.
\end{theorem}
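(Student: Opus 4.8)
The plan is to separate the way in which $\chi$ and $\pi$ enter $|\bA|$, reduce the statement to the non-vanishing of finitely many determinants of matrices with \emph{algebraic} entries, and then settle the decisive one with Lemma~\ref{lem:R_peven_invertible}.

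First I would record the $\chi$- and $\sqrt{2\pi}$-dependence of the entries $S(\beta_2,\alpha_2)$ of \eqref{eq:integralS}, keeping in mind that in $\bA$ the first index $\beta_2$ is always odd. Using that $\He_m$ is an even (odd) polynomial for $m$ even (odd), together with the elementary half-line Gaussian moments $\int_0^\infty\xi^{2j}e^{-\xi^2/2}\dd\xi=\tfrac12\sqrt{2\pi}\,(2j-1)!!$ and $\int_0^\infty\xi^{2j+1}e^{-\xi^2/2}\dd\xi=2^{j}j!$, one checks: for $\alpha_2$ even one has $\hat\chi=1$ and $S^\star(\beta_2,\alpha_2)\in\mathbb{Z}$, so $S(\beta_2,\alpha_2)=(2\pi)^{-1/2}\times(\text{integer})$; for $\alpha_2$ odd one has $\hat\chi=(2-\chi)/\chi$ and $S^\star(\beta_2,\alpha_2)=\sqrt{2\pi}\times(\text{rational})$, this rational being $0$ whenever $\alpha_2>\beta_2$. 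Since $\bh$ is $(2\pi)^{-1/2}$ times an integer vector and the rank-one correction $-2\bh\be_1^T$ only alters the even column $\alpha_2=2$, while $\hat{\bR}_+$ has algebraic entries (its rows are $\hat{\He}_{i-1}(\hat{\lambda}_j)/(i+1)!$ with $\hat{\lambda}_j$ a root of the monic integer polynomial $\hat{\He}_{M-2}$), setting $t:=(2-\chi)/\chi$ we obtain $\bA=\bigl((2\pi)^{-1/2}\bh',\ (2\pi)^{-1/2}\boldsymbol{\mathrm{U}}+t\,\bW\bigr)$, where $\bh'$ is integral and $\boldsymbol{\mathrm{U}},\bW$ are $\lfloor M/2\rfloor\times(\lfloor M/2\rfloor-1)$ matrices with algebraic entries built from $\hat{\bR}_+$ and the even, resp.\ odd, columns of $\bS-2\bh\be_1^T$; in particular $\bW=\bS_o^{\mathbb{Q}}\hat{\bR}_+$ with $\bS_o^{\mathbb{Q}}$ the rational odd part.

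Next I would expand $|\bA|$ by multilinearity in its columns to get $|\bA|=\sum_{k=0}^{n-1}(2\pi)^{-(n-k)/2}t^{k}E_k$ with $n:=\lfloor M/2\rfloor$ and each $E_k\in\overline{\mathbb{Q}}$. Now assume $\chi$ algebraic; then $\chi\ne0$ and $\chi\ne2$, so $t$ is algebraic, and collecting the terms with $n-k$ even and with $n-k$ odd gives $|\bA|=P+(2\pi)^{-1/2}R$ with $P,R$ polynomials in $\pi^{-1}$ having algebraic coefficients. Because $\pi$ is transcendental over $\overline{\mathbb{Q}}$ and $X^2-2\pi$ is irreducible over $\overline{\mathbb{Q}}(\pi)$ (that is, $\sqrt{2\pi}\notin\overline{\mathbb{Q}}(\pi)$: $\pi$ has odd order at the place $\pi$ of the rational function field $\overline{\mathbb{Q}}(\pi)$), the set $\{1,(2\pi)^{-1/2}\}$ is linearly independent over $\overline{\mathbb{Q}}(\pi)$, so $|\bA|=0$ forces $P=R=0$, and then matching powers of $\pi^{-1}$ (using $t\ne0$) forces $E_k=0$ for every $k$. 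Hence it suffices to exhibit one nonzero $E_k$.

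I would take the top coefficient in $t$, namely (up to sign) $E_{n-1}=\det(\bh',\bW)$ with $\bW=\bS_o^{\mathbb{Q}}\hat{\bR}_+$. The row of $\bS_o^{\mathbb{Q}}$ indexed by $\beta_2=1$ vanishes — its only potentially nonzero entries lie in odd columns $\alpha_2\ge 3>\beta_2$, where $S^\star(1,\alpha_2)=0$ — so the first row of $\bW$ is zero; since the first entry of $\bh'$ equals $1$, cofactor expansion along the first row gives $E_{n-1}=\pm\det(\bW_{\mathrm{rows}\,2..n})$. On the remaining rows $\beta_2\in\{3,5,\dots,2n-1\}$, the block of $\bS_o^{\mathbb{Q}}$ on the odd columns $\alpha_2\in\{3,5,\dots,2n-1\}$ is lower triangular with diagonal entries $\tfrac12\beta_2!\ne0$ (again $S^\star(k,m)=0$ for $m>k$ of equal parity kills the strictly upper part), hence invertible, and it picks out exactly the even-indexed rows of $\hat{\bR}_+$, i.e.\ $\hat{\bR}_{+,\text{even}}$; therefore $\bW_{\mathrm{rows}\,2..n}$ is an invertible lower-triangular matrix times $\hat{\bR}_{+,\text{even}}$, which is invertible by Lemma~\ref{lem:R_peven_invertible}, so $E_{n-1}\ne0$ (for $M\le 3$ one has $n=1$ and $\bA=(2\pi)^{-1/2}$, trivially nonzero). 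The main obstacle is precisely this last step: the index bookkeeping must be arranged so that the $t$-leading coefficient is genuinely governed by the staircase zero-pattern of $S^\star$ and by the matrix $\hat{\bR}_{+,\text{even}}$ of Lemma~\ref{lem:R_peven_invertible}; by comparison the transcendence/decoupling step — which is what makes ``$\chi$ algebraic'' the natural hypothesis — is short once the $\sqrt{2\pi}$-bookkeeping of the $S(\beta_2,\alpha_2)$ is carried out.
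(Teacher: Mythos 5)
Your proof is correct, and while it rests on the same two pillars as the paper's argument --- the transcendence of $\sqrt{2\pi}$ (which is exactly why ``$\chi$ algebraic'' is the natural hypothesis) and Lemma~\ref{lem:R_peven_invertible} --- the way you extract the decisive nonzero algebraic quantity is genuinely different. The paper first reduces $\bA$ to $(\bh,\bS\hat{\bR}_+)$ by a column operation, then uses the recursion \eqref{eq:rec} (encoded in a lower bidiagonal matrix $\boldsymbol{L}$) to convert the even-$\alpha_2$ part of $\bS^\star$ into entries $S^\star(\text{even},\text{even})$, which form a lower triangular block; after factoring out $\hat{\bR}_{+,\text{even}}$ via Lemma~\ref{lem:R_peven_invertible}, it regards the remaining determinant as a polynomial $p(\lambda)$ in $\lambda=\sqrt{2\pi}$ whose leading coefficient is the product of those triangular diagonal entries. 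You instead expand $|\bA|$ by multilinearity as a polynomial in $t=(2-\chi)/\chi$ with coefficients in $\overline{\mathbb{Q}}[(2\pi)^{-1/2}]$ and isolate the \emph{top $t$-coefficient}, which is carried by the odd-$\alpha_2$ columns; there the vanishing rule $S^\star(k,m)=0$ for $m>k$ with $k-m$ even applies directly (both indices odd), so no recursion or row reduction is needed, and the resulting triangular odd block times $\hat{\bR}_{+,\text{even}}$ gives the nonvanishing in one stroke. Your route is leaner --- it dispenses with $\boldsymbol{L}$ and with forming $\hat{\bR}_{+,\text{odd}}\hat{\bR}_{+,\text{even}}^{-1}$ --- at the cost of the columnwise bookkeeping in the multilinear expansion. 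Two minor points: ``$\chi$ algebraic'' does not by itself give $\chi\neq 0$ (zero is algebraic); what you actually need is $\chi\in(0,1]$, which is implicit since $S(k,m)$ and the boundary condition \eqref{eq:bc-xi2} already divide by $\chi$, and then $t\geq 1$ so $t\neq 0$. Also, your two-step decoupling (splitting into $P+(2\pi)^{-1/2}R$ and then matching powers of $\pi^{-1}$) can be compressed to the single observation that $\sum_k t^k E_k\,\mu^{n-k}=0$ with $\mu=(2\pi)^{-1/2}$ transcendental and all $E_k$ algebraic forces every $E_k$ to vanish.
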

\begin{proof}
  We times $\bA$ by
  $\left(
    \begin{array}{cc} 
      1 & 2 \be_1^T \hat{\bR}_+ \\ 
      \bzero & \bI \end{array} 
  \right)$ to obtain $(\bh, \bS \hat{\bR}_+)$. Thus $|\bA| = |(\bh,\bS
  \hat{\bR}_+)|$. 

  We retrieve the coefficient $h_m(\chi)$ in $S(k,m)$ to have
  \[
  \bS = \bS^\star_0 \boldsymbol{H} 
  \]
  where
  $\boldsymbol{H} = \dfrac{1}{\sqrt{2\pi}} \diag \{ h_2(\chi),
  h_3(\chi), \cdots, h_{M-1}(\chi) \}$ and
  \[
  \bS^\star_0 = \left( 
    \begin{array}{cccc}
      S^\star(1,2) & S^\star(1,3) & \cdots & S^\star(1,M-1) \\
      S^\star(3,2) & S^\star(3,3) & \cdots & S^\star(3,M-1) \\
      \vdots & \vdots & \ddots & \vdots   \\
      S^\star(2\lfloor \frac{M}{2} \rfloor-1 ,2) 
                   & S^\star(2\lfloor \frac{M}{2} \rfloor-1, 3) 
                                  & \cdots & S^\star(2\lfloor
      \frac{M}{2} \rfloor-1, M-1) 
    \end{array} \right).
  \]

  By the recursion relation \eqref{eq:rec} of $S^\star(k,m)$, we have
  that
  \[
  \boldsymbol{L} \bS^\star_0 = \left( 
    \begin{array}{c}
      S^\star(1,2), ~ S^\star(1,3), ~ \cdots, ~ S^\star(1,M-1) \\
      \bS^\star_1
    \end{array} 
  \right),
  \]
  where 
  \[
  \boldsymbol{L} = \left(
    \begin{array}{ccccc}
      1 &&&& \\
      -2 & 1 &&& \\
        & -4 & 1 && \\
        & & \ddots & \ddots & \\
        & & & -(2\lfloor \frac{M}{2} \rfloor-2) & 1
    \end{array}
  \right),
  \]
  and
  \[
  \bS^\star_1 = \left( 
    \begin{array}{cccc}
      2S^\star(2,1) & 3S^\star(2,2) & \cdots & (M-1)S^\star(2,M-2) \\
      2S^\star(4,1) & 3S^\star(4,2) & \cdots & (M-1)S^\star(4,M-2) \\
      \vdots & \vdots & \ddots & \vdots   \\
      2S^\star(2\lfloor \frac{M}{2} \rfloor-2,1) 
                    & 3S^\star(2\lfloor \frac{M}{2} \rfloor-2, 2) 
                                    & \cdots & (M-1)S^\star(2\lfloor
      \frac{M}{2} \rfloor-2, M-2) 
    \end{array} 
  \right).
  \]

  Noticing that $\boldsymbol{L} \bh = (1/\sqrt{2\pi}, 0, \cdots, 0)^T$, we have
  that
  \[
  (\bh, \bS \hat{\bR}_+) = \boldsymbol{L}^{-1} \left(
    \begin{array}{cc}
      1/\sqrt{2\pi} & (S^\star (1, 2), \cdots, S^\star (1, M-1)) \\
      \bzero & \bS^\star_1
    \end{array}
  \right) \left(
    \begin{array}{cc} 1 & \\ & \boldsymbol{H} \hat{\bR}_+ \end{array}
  \right).
  \]
  Thus we need only to verify the determinant of
  $\bS^\star_1 \boldsymbol{H} \hat{\bR}_+$ is not vanished. 
  Consider the permutation matrix in \eqref{mat:P_sigma},
  we then see that
  \[
  \bS^\star_1 \boldsymbol{P}_\sigma = (\bS^\star_{\text{even}},
  \bS^\star_{\text{odd}}),
  \]
  where $\bS^\star_{\text{even}}$ is made with the even columns of
  $\bS^\star_1$ as
  \[
  \bS^\star_{\text{even}} = \left(
    \begin{array}{cccc}
      3S^\star(2,2) & 5S^\star(2,4) & 7S^\star(2,6) & \hdots \\
      3S^\star(4,2) & 5S^\star(4,4) & 7S^\star(4,6) & \hdots \\
      \vdots & \vdots & \vdots & \vdots \\
      3S^\star(2\lfloor \frac{M}{2} \rfloor-2,2) & 5S^\star(2\lfloor
      \frac{M}{2} \rfloor-2,4) & 7S^\star(2\lfloor \frac{M}{2}
      \rfloor-2,6)& \hdots 
    \end{array}
  \right),
  \]
  and $\bS^\star_{\text{odd}}$ is made with the odd columns of
  $\bS^\star_1$ as
  \[
  \bS^\star_{\text{odd}} = \left(
    \begin{array}{cccc}
      2S^\star(2,1) & 4S^\star(2,3) & 6S^\star(2,5) &\hdots \\
      2S^\star(4,1) & 4S^\star(4,3) & 6S^\star(2,5) &\hdots \\
      \vdots & \vdots & \vdots & \vdots \\
      2S^\star(2\lfloor \frac{M}{2} \rfloor-2,1) & 4S^\star(2\lfloor
      \frac{M}{2} \rfloor-2,3) & 6S^\star(2\lfloor \frac{M}{2}
      \rfloor-2,5) & \hdots  
    \end{array}
  \right).
  \]
  With the integral properties of $S^{\star}(k,m)$ in ~\ref{sec:bc},
  $\bS^{\star}_{\text{even}}$ is a lower triangular matrix and each
  entry in its lower triangular part is an
  algebraic number$\times \sqrt{2\pi}$.

  The diagonal matrix $\boldsymbol{H}$ is turned into
  \[
  \boldsymbol{P}_\sigma^{-1} \boldsymbol{H} \boldsymbol{P}_\sigma =
  \dfrac{1}{\sqrt{2\pi}}\left(
    \begin{array}{cc}
      \bI & \\
          & \dfrac{2-\chi}{\chi} \bI
    \end{array}
  \right).
  \]
  We then have that
  \[
  \begin{aligned}
    \bS^\star_1 \boldsymbol{H} \hat{\bR}_+ = &
    \bS^\star_1 \boldsymbol{P}_\sigma ~ \boldsymbol{P}_\sigma^{-1}
    \boldsymbol{H} \boldsymbol{P}_\sigma ~ \boldsymbol{P}_\sigma^{-1}
    \hat{\bR}_+ \\
    = & \dfrac{1}{\sqrt{2\pi}} 
    (\bS^\star_{\text{even}}, \bS^\star_{\text{odd}})
    \left(
      \begin{array}{cc}
        \bI & \\
            & \dfrac{2-\chi}{\chi} \bI
      \end{array}
    \right) \left(
      \begin{array}{c}
        \hat{\bR}_{+,\text{even}} \\
        \hat{\bR}_{+,\text{odd}}
      \end{array}
    \right) \\
    = & \dfrac{1}{\sqrt{2\pi}} \left( \bS^\star_{\text{even}}
      \hat{\bR}_{+,\text{even}} + \dfrac{2-\chi}{\chi}
      \bS^\star_{\text{odd}} \hat{\bR}_{+,\text{odd}} \right).
  \end{aligned}
  \]
  Since $\hat{\bR}_{+,\text{even}}$ is invertible by Lemma
  \ref{lem:R_peven_invertible},
  \begin{equation}\label{eq:SHR}
    \bS^\star_1 \boldsymbol{H} \hat{\bR}_+ = \dfrac{1}{\sqrt{2\pi}}
    \left( \bS^\star_{\text{even}} + \dfrac{2-\chi}{\chi}
      \bS^\star_{\text{odd}} \hat{\bR}_{+,\text{odd}}
      \hat{\bR}_{+,\text{even}}^{-1}\right)\hat{\bR}_{+,\text{even}},
  \end{equation}
  and we only need to verify the matrix
  $\bS^\star_{\text{even}} + \dfrac{2-\chi}{\chi}
  \bS^\star_{\text{odd}} \hat{\bR}_{+,\text{odd}}
  \hat{\bR}_{+,\text{even}}^{-1}$
  in \eqref{eq:SHR} is not singular. Considering the polynomial of
  $\lambda$ defined by
  \[
  p(\lambda) \triangleq
  \left|\dfrac{\lambda}{\sqrt{2\pi}}\bS^\star_{\text{even}} +
    \dfrac{2-\chi}{\chi} \bS^\star_{\text{odd}}
    \hat{\bR}_{+,\text{odd}} \hat{\bR}_{+,\text{even}}^{-1}\right|,
  \] 
  we point out that $p(\lambda)$ is a polynomial with all coefficients
  to be algebraic numbers, since $\chi$ is assumed to be algebraic,
  and entries of matrices
  $\dfrac{1}{\sqrt{2\pi}}\bS^\star_{\text{even}}$,
  $\hat{\bR}_{+,\text{even}}^{-1}$, $\bS^\star_{\text{odd}}$, and
  $ \hat{\bR}_{+,\text{odd}}$ are all algebraic
  numbers. Particularly, the coefficient of the leading term of
  $p(\lambda)$ is the product of all diagonal entries in matrix
  $\dfrac{1}{\sqrt{2\pi}}\bS^{\star}_{\text{even}}$ and thus is not
  vanished. Therefore, $p(\lambda) = 0$ can only valid for $\lambda$
  to be an algebraic number, too. Thus $p(\sqrt{2\pi}) \neq 0$ and
  consequently $\left| \bA \right| \neq 0$. We conclude that the
  linear system \eqref{sol:linear1} is uniquely solvable.
\end{proof}
\begin{remark}
  Definitely, we speculate that $\left| \bA \right| \neq 0$ for all
  $\chi$, while currently we can not prove it unfortunately. If we
  take $\left| \bA \right|$ as a function of $\chi$, it is clearly a
  continuous function. The theorem above declares that the value of
  the function is not zero on all algebraic numbers, and by the
  continuity of the function, the roots for $\left| \bA \right| = 0$
  can not be dense on $\mathbb{R}$ at least.
\end{remark}

\subsection{Convergence in moment order}\label{sec:convergence}
Let us reveal below the connection of the linearized HME and the
linearized Boltzmann equation in \cite{Williams2001}. For Kramers'
problem, the boundary condition we proposed is related with that in
\cite{Williams2001}, either. Roughly speaking, our system is
illustrated to be a particular discretization of the equation in
\cite{Williams2001}. This allows to examine the convergence of the
solution of our systems to the numerical results of the equation in
\cite{Williams2001}. It is demonstrated that the solution converges to
that of the linearized Boltzmann equation in \cite{Williams2001} with
the increasing of moment order. Let us start from a brief review of
the main result on the Kramers' problem in \cite{Williams2001}.

For the time independent Boltzmann equation
\begin{equation}\label{eq:space-Boltz}
\bxi\cdot\nabla_{\bx} f = Q(f,f),
\end{equation}
considering here only Kramers' problem is studied, we linearize the
distribution function $f$ as 
\begin{equation}\label{eq:linear-func}
    f(\bx,\bxi) = \mathcal{M}(\bx,\bxi) [ 1 + h(\bx,\bxi) ],
\end{equation}
where $h(\bx,\bxi)$ is a disturbance term caused by the small
perturbation near the local equilibrium Maxwellian
$\mathcal{M}(\bx,\bxi)$, which has the form 
\[
    \mathcal{M}(\bx,\bxi) =
    \frac{\rho_{0}(x,y)}{(2\pi\theta_{0}(x,y))^{3/2}}
    \exp\left( -\frac{(\xi_x - u(y))^2 + \xi_y^2 + \xi_z^2}
    {2\theta_{0}(x,y)} \right).
\]
Here $\rho_0$ and $\theta_0$ are same as that in
\eqref{eq:dimensionless}, and 
\[
    u(y) = K y.
\]
Inserting \eqref{eq:linear-func} into the time independent Boltzmann
equation \eqref{eq:space-Boltz}, and discarding the high-order small
quantities, we can obtain
\begin{equation}\label{eq:insert-Boltz}
    \bxi \cdot \nabla_{\bx} \mathcal{M} + \mathcal{M} \bxi \cdot \nabla_{\bx} h =
    J(\mathcal{M},h),
\end{equation}
where $J(\mathcal{M},h)$ is the linearized BGK collision 
\[
    J(\mathcal{M},h) = - \mathcal{M}\left\{ \nu h - \frac{\nu}{\sqrt{2\pi\theta_0}^3}
    \int h(y, \bxi') \left[1 + \frac{1}{\theta_0} \bxi \cdot \bxi' +
        \frac{2}{3}\left(\frac{|\bxi|^2}{2\theta_0}- \frac{3}{2}\right)
        \left(\frac{|\bxi'|^2}{2\theta_0}-\frac{3}{2}\right)\right]
        \exp\left(-\frac{|\bxi|^2}{2\theta_0}\right) \dd \bxi' \right\},
\]
where $\nu$ is the collision frequency of BGK model.
For convenience, we introduce the dimensionless variables
\[
    \xi_i = \sqrt{\theta_0}\bar{\xi}_i, \qquad K = \sqrt{\theta_0}K_0,
    \qquad \bx=L\bar{\bx},\qquad 
    \Kn = \frac{\sqrt{\theta_0}}{L\nu}.
\]
Then direct calculations and some simplification yield
\[
    \begin{aligned}
        &\bar{\xi}_x
        \bar{\xi}_y K_0 +  \bar{\xi}_y \pd{h(\bar{y},\bar{\bxi})}{\bar{y}}  \\
        &= - \frac{1}{\Kn} \left\{ h(\bar{y}, \bar{\bxi}) - \frac{1}{\sqrt{2\pi}^3} \int
        h(\bar{y}, \bar{\bxi'}) \left[1 + \bar{\bxi} \cdot \bar{\bxi'} + 
            \frac{2}{3}\left(\frac{|\bar{\bxi}|^2-3}{2}\right)
            \left(\frac{|\bar{\bxi'}|^2-3}{2}\right)\right]
            \exp(-\frac{|\bar{\bxi}|^2}{2}) \dd \bar{\bxi}'\right\},
    \end{aligned}
\]
and
\begin{equation}\label{eq:integralu}
 \bar{u}_1(\bar{y}) = K_0 \bar{y} + \frac{1}{\sqrt{2\pi}}
            \int_{-\infty}^{\infty} Z(\bar{y},\bar{\xi}) \exp\left(
            -\frac{\bar{\xi}^2}{2} \right) \dd \bar{\xi},
\end{equation}
where 
\[
    Z(\bar{y}, \bar{\xi}_y) = \frac{1}{2\pi} \int_{-\infty}^{\infty}
    \int_{-\infty}^{\infty} \bar{\xi}_x h(\bar{y}, \bar{\bxi})
    \exp\left(-\frac{\bar{\xi}_x^2+\bar{\xi}_z^2}{2}\right) \dd
    \bar{\xi}_x \dd \bar{\xi}_z .
\]
Then we have
\begin{equation}\label{eq:William}
  K_0 \bar{\xi} + \bar{\xi} \pd{Z(\bar{y},\bar{\xi})}{\bar{y}}
  =\frac{1}{\Kn} \left( 
    - Z(\bar{y},\bar{\xi}) + \frac{1}{\sqrt{2\pi}} \int_{\bbR}
    Z(\bar{y}, \bar{\xi'}) \exp\left (-\frac{\bar{\xi'}^2}{2}\right)
    \dd \bar{\xi'} \right).
\end{equation}
From \eqref{eq:sol_u1_formal} and \eqref{eq:integralu} we notice that
\[
K_0 = - \frac{\bar{\sigma}_{12}}{\Kn},
\]
and
\begin{equation}\label{eq:integralZ}
  \begin{aligned}
    \bar{f}_{e_1+ie_2} &= \frac{1}{i!}\frac{1}{\sqrt{2\pi}^3}
    \int_{\bbR^3}\He_1(\bar{\xi}_x) \He_i(\bar{\xi}_y)
    h(\bar{y},\bar{\bxi}) \exp\left(-\frac{\bar{\bxi}^2}{2}\right)
    \dd\bar{\bxi}, \quad i=1, \dots,M-1,\\
    &= \frac{1}{i!}\frac{1}{\sqrt{2\pi}}
    \int_{\bbR} \He_i(\bar{\xi}) Z(\bar{y},\bar{\xi})
    \exp\left(-\frac{\bar{\xi}^2}{2}\right) 
    \dd\bar{\xi}, \quad i= 1,\dots,M-1.
    \end{aligned}
\end{equation}
Following \cite{Williams2001}, we use the model which is
also based on the diffuse-specular process, and boundary condition can
be written as 
\[
f(0, \bxi) = \chi N f^W_M(\bx, \bxi) + (1-\chi) f(0, \bxi^{\ast}),
\]
where $f^W_M, \bxi^{\ast}$ is defined in \eqref{eq:equilibrium}, $N$
is a normalizing factor to be determined
\cite{Williams2001}. Using the zero mass flux condition
\[
\int_{\xi_y<0} \xi_y f(0,\bxi) \dd\bxi + \int_{\xi_y>0} \xi_y
f(0,\bxi) \dd\bxi = 0
\]
at $y=0$ to calculate the $N$.
After the linearization and nondimensionalization, the boundary
condition in Cartesian velocity coordinates as follows
\[
\begin{aligned}
h(0, \bar{\xi}_x, \bar{\xi}_y, \bar{\xi}_z) &= \chi[ \bar{\xi}_x
  \bar{u}_1^W + \delta(\frac{\bar{\xi}^2}{2} - 2)] + (1- \chi) h(0, \bar{\xi}_x,
-\bar{\xi}_y, \bar{\xi}_z) \\
&- \frac{\chi}{2\pi} \int^0_{-\infty} \bar{\xi}'_y \dd \bar{\xi}'_y
\int_{\bbR^2} h(0, \bar{\xi}'_x, \bar{\xi}'_y, \bar{\xi}'_z)
\exp(-\frac{\bar{\bxi}'^2}{2}) \dd \bar{\xi}'_x \bar{\xi}'_z; \quad
\bar{\xi}_y > 0. 
\end{aligned}
\]

Consider the Kramers' problem with boundary condition $\bar{u}_1^W =
0$ and $\delta = (\theta_W - \theta_0)/ \theta_W = 0$, then we have
\begin{equation}\label{eq:William-bc}
  Z(0, \bar{\xi}) = (1 - \chi) Z(0, -\bar{\xi}) ; \quad \bar{\xi} > 0. 
\end{equation}

The equation \eqref{eq:William} is an integral equation on $\bar{\xi}$
and differential equation on $y$. Here we discretize it on
$\bar{\xi}$.  Consider the Gauss-Hermite quadrature with $M\in\bbN$
points, and denote the weights and integral points by $\omega_i$ and
$\bar{\xi}_i$, $i=1,\cdots,M$. If we sort the $\bar{\xi}_i$ in
decending order, then $\bar{\xi}_i = \lambda_i$ in
\eqref{mat:pos-neg}. Let $Z(\bar{y})^k =
Z(\bar{y},\bar{\xi}_k)$ and
$\bZ(\bar{y})=(Z(\bar{y})^1,\cdots,Z(\bar{y})^M)^T$ and
$\bomega=(\omega_1,\cdots,\omega_M)^T$, then we have
\begin{equation}
    K_0 \bLambda \boldsymbol{1} +
    \bLambda\od{\bZ(\bar{y})}{\bar{y}}=\frac{1}{\Kn}\left( 
    \boldsymbol{1}\bomega^T-\bI \right)\bZ(\bar{y}),
\end{equation}
where $\bLambda$ is same as the \eqref{mat:Lamb}
and $\bI$ is
the $M\times M$ identity matrix, and
$\boldsymbol{1}=(1,\cdots,1)^T\in\bbR^M$.
Let $\bW = \diag\{\omega_i;~ i=1, \dots, M\}$, since $\bW$ is
independent of $\bar{y}$ and $\bW$ and $\bLambda$ are both diagonal
matrices, the upper formulation can be rewritten as
\begin{equation}\label{eq:discrete-matrix}
 K_0 \bLambda \bW \boldsymbol{1} + \bLambda
 \od{\bW\bZ(\bar{y})}{\bar{y}} = \frac{1}{\Kn} \left( 
  \bW\boldsymbol{1}\bomega^T\bW^{-1} - \bI \right) \bW\bZ(\bar{y}).
\end{equation}

Noticing $\bar{\xi}_i$, $i=1,\cdots,M$ are Gauss-Hermite integral
points, we have $\He_M(\bar{\xi}_i)=0$, which indicates
$\bLambda = \bR^{-1} \bM \bR$, where $\bM$ and $\bR$ are defined in
\eqref{eq:def_MQ} and \eqref{mat:eigen-vec}, respectively.
The originality of the Hermite polynomial indicates
$\sum_{i=1}^Mw_i\He_j(\bar{y}_i)=\delta_{j,0}$, thus we have
\begin{equation*}
    \bR\bW\boldsymbol{1} = \be_1, \quad \bomega^T = \be_1^T
    \bR \bW.
\end{equation*}
Now \eqref{eq:discrete-matrix} can be rewritten as
\begin{equation}\label{eq:discrete}
    \begin{aligned}
        K_0 \be_2 + \bM \od{[\bR\bW \bZ(\bar{y})]}{\bar{y}} &=
        \bM \od{V}{\bar{y}}\\
        &= \frac{1}{\Kn} \bR \left(\bW\boldsymbol{1}\bomega^T\bW^{-1}
        - \bI \right) \bW\bZ(\bar{y})\\
        &= \frac{1}{\Kn}
        \left(\bR\bW\boldsymbol{1}\bomega^T\bW^{-1}\bR^{-1}
        - \bI \right) [\bR\bW\bZ(\bar{y})]\\
        &=\frac{1}{\Kn}\left( \be_1\be_1^T-\bI \right)
        [\bR\bW\bZ(\bar{y})]\\
        &=-\frac{1}{\Kn}\bQ [\bR\bW\bZ(\bar{y})]
        = -\frac{1}{\Kn} \bQ V,
    \end{aligned}
\end{equation}
where $\bQ$ is defined in \eqref{eq:def_MQ} and
it is readily shown that $V$ can be written as
\[
V = \left\{
\begin{array}{l}
  \bar{u}_1(\bar{y}) = K_0\bar{y} + \displaystyle\sum_{j=1}^M \omega_j
  Z^j(\bar{y}), \\
  \bar{f}_{e_1+ie_2} = (\bR\bW \bZ(\bar{y}))_{i+1} = \dfrac{1}{i!}
  \displaystyle\sum_{j=1}^M
  \omega_j\He_i(\bar{\xi}^j)Z^j(\bar{y}), \quad i=1,\dots,M-1,
\end{array} \right.
\]
which is the discrete form of \eqref{eq:integralu} and
\eqref{eq:integralZ}.

Similar discretization can be carried out for boundary condition
\eqref{eq:William-bc}. It can be written as
\[
Z(0, \bar{\xi}_i) = (1 - \chi) Z(0, -\bar{\xi}_i),
\quad (i=1, \dots, \lfloor \frac{M}{2} \rfloor). 
\]
Since the zeros of Hermite polynomials are symmetric, the equation
\[
\omega_i Z(0, \bar{\xi}_i) = (1- \chi) \omega_j Z(0, \bar{\xi}_j),
\quad (i = 1, \dots, \lfloor \frac{M}{2} \rfloor)
\]
has to be satisfied for $j = M+1-i$. Thus we have
\begin{equation}\label{bc:discrete}
\bH_{\chi} \bW \bZ(0) = 0,
\end{equation}
where $\bZ(0) = (Z(0, \bar{\xi}_1), \dots, Z(0, \bar{\xi}_M))^T$ and
\[
\begin{aligned}
&\text{when $M$ is even:}~
\bH_{\chi} = \left(
\begin{array}{cccccc}
1 & & & & & \chi-1 \\
& \ddots & &  & \iddots & \\
 & & 1 & \chi-1 & &
\end{array} \right )_{\frac{M}{2} \times M} ,\\
&\text{when $M$ is odd:}~
\bH_{\chi} = \left(
\begin{array}{ccccccc}
1 & & & 0 & & & \chi-1 \\
& \ddots & & \vdots &  & \iddots & \\
 & & 1 & 0 &\chi-1 & &
\end{array} \right )_{\lfloor \frac{M}{2} \rfloor \times M}.
\end{aligned}
\]
Let
\[
    \bK_v = \frac{1}{\chi} \left(
    \begin{array}{cccc}
        \bar{\xi}_1 & \bar{\xi}_2 & \dots & \bar{\xi}_{\lfloor \frac{M}{2} \rfloor}\\
        \bar{\xi}^3_1 & \bar{\xi}^3_2 & \dots & \bar{\xi}^3_{\lfloor \frac{M}{2} \rfloor}\\
        \vdots & \vdots & \ddots & \vdots\\
        \bar{\xi}^{2\lfloor \frac{M}{2} \rfloor-1}_1 & \bar{\xi}^{2\lfloor
            \frac{M}{2} \rfloor-1}_2 & \dots & \bar{\xi}^{2\lfloor \frac{M}{2}
        \rfloor-1}_{\lfloor \frac{M}{2} \rfloor} 
    \end{array} \right)_{\lfloor \frac{M}{2} \rfloor \times \lfloor
    \frac{M}{2} \rfloor}.
\]
Since $\bar{\xi}_1,\cdots,\bar{\xi}_{\lfloor\frac{M}{2}\rfloor}$ are
distinct, $\bK_v$ is invertible due to the invertibility of
Vandermonde matrix.
Let $\tilde{\bR} = (\tilde{r}_{ij})_{M\times M}$ with $\tilde{r}_{ij}
= \He_{i-1}(\lambda_j)$, $i, j = 1,\dots,M,$ then from
\eqref{mat:eigen-vec} we have 
$\bR = \diag \{1,1,\frac{1}{2!},\dots,\frac{1}{(M-1)!}\} \cdot
\tilde{\bR}$. 
Using the orthogonality of Hermite polynomials
\[
    \frac{1}{\sqrt{2\pi}}\int_{\bbR} \He_j(x) \He_k(x)
    \exp\left(-\frac{x^2}{2}\right) \dd x = j!\delta_{jk},
\]
we have 
\[
    \bW \tilde{\bR}^T\bR = \bI.
\]
Then we multiply matrix \eqref{bc:discrete} by $\bK_v$, and the matrix
form of boundary condition becomes
\begin{equation}\label{bc:mat-form}
    \bK_v \bH_{\chi} \bW \bZ(0) = [\bK_v \bH_{\chi} \bW \tilde{\bR}^T]
    \cdot [\bR\bW \bZ(0)] = 0. 
\end{equation}
The discretization of $S(l,m)$ in \eqref{eq:integralS} is
\[
\begin{aligned}
    S(l,m) &= \frac{1}{\chi} \sum_{j=1}^{\lfloor \frac{M}{2} \rfloor}
    \{\bar{\xi}_j^l \omega_j 
    [\He_m(\bar{\xi}_j) - (1 - \chi)\He_m(-\bar{\xi}_j)]\} \\
    &=\frac{1}{\chi} (\bar{\xi}_1^l, \bar{\xi}_2^l, \dots,
    \bar{\xi}_{\lfloor \frac{M}{2} \rfloor}^l) 
    \cdot \bH_{\chi} \bW \cdot (\He_m(\bar{\xi}_1),
    \He_m(\bar{\xi}_2),\dots, \He_m(\bar{\xi}_M))^T,
\end{aligned}
\]
then
\[
    \bK_v \bH_{\chi} \bW \tilde{\bR}^T = \left(
    \begin{array}{cccc}
        S(1,0)                              & S(1,1) & \cdots & S(1,M-1) \\
        S(3,0)                              & S(3,1) & \cdots & S(3,M-1) \\
        \vdots                              & \vdots & \ddots & \vdots   \\
        S(2\lfloor \frac{M}{2} \rfloor-1,0) & S(2\lfloor \frac{M}{2} \rfloor-1,1)
                                            & \cdots & S(2\lfloor \frac{M}{2} \rfloor-1,M-1)
    \end{array} \right).
\]
And \eqref{bc:mat-form} is then turned into
\[
\left(
\begin{array}{cccc}
S(1,0) & S(1,1) & \cdots & S(1,M-1)\\
S(3,0) & S(3,1) & \cdots & S(3,M-1)\\
\vdots & \vdots & \ddots & \vdots\\
S(2\lfloor \frac{M}{2} \rfloor-1,0) & S(2\lfloor \frac{M}{2}
\rfloor-1,1) & \cdots & S(2\lfloor \frac{M}{2} \rfloor-1,M-1) 
\end{array} \right) \cdot V^{(0)} = 0,
\]
which is same as the boundary condition in \eqref{bc:linear}.



\section{Quantity Validification}
In this section, we numerically study the convergence of the solutions
of the linearized HME to that of the linearized Boltzmann equation,
and Knudsen layer effect of the velocity and effective viscosity, and
compare them with the existing results. In all the tests, high
precision computation in Maple\footnote{Maple is a trademark of
Waterloo Maple Inc.} is used to reduce the numerical error.

\subsection{Convergence in moment order}\label{sec:convergenceNum}
In order to compare the results with linearized Boltzmann equation
\cite{Williams2001}, we normalized the velocity in
\eqref{eq:sol_u1_formal} as
\begin{equation}\label{eq:refer_velo}
    \tilde{u}(\bar{y}) = -\Kn\dfrac{\bar{u}}{\bar{\sigma}_{12}}
    = \bar{y} + \frac{\Kn}{\bar{\sigma}_{12}} \left(2\sum_{i = 1}^{\lfloor
        \frac{M-2}{2} \rfloor} \hat{c_i}
        \exp\left(-\frac{\bar{y}}{\Kn\hat{\lambda}_i}\right) - c_0\right).
\end{equation}
The normalized velocity can be split into three parts
\cite{Williams2001,Siewert2001}
\begin{equation}\label{eq:defe_velo}
    \tilde{u}(\bar{y})
    = \bar{y} + \zeta - \tilde{u}_d(\bar{y}),
\end{equation}
where $\tilde{u}_d(\bar{y})$ is the velocity defect, satisfying
$\lim\limits_{\bar{y} \to +\infty} \tilde{u}_d(\bar{y}) = 0$, and
$\zeta$ is the slip coefficient, which is
\begin{equation}\label{eq:slip_coefficient_approx}
    \zeta =
    \lim_{\bar{y}\to+\infty}(\tilde{u}(\bar{y})+\tilde{u}_d(\bar{y})-\bar{y})
    = -\Kn \cdot \frac{c_0}{\bar{\sigma}_{12}}.
\end{equation}
Then the velocity defect is
\begin{equation}
    \tilde{u}_d(\bar{y}) = 
    2\Kn \sum_{i = 1}^{\lfloor
        \frac{M-2}{2} \rfloor} \frac{\hat{c_i}}{\bar{\sigma}_{12}}
        \exp\left(-\frac{\bar{y}}{\Kn\hat{\lambda}_i}\right).
\end{equation}
Here we notice that there is always a factor $\bar{\sigma}_{12}$ in
the expression of $\hat{c}_i$ and $c_0$ in \eqref{eq:sol_u1_formal}.
In this subsection, we fix $\Kn = 1/\sqrt{2}$ as a constant
for convenience. Next we study the convergence of the velocity defect
and slip coefficient, respectively.

\begin{figure}[ht]
  \centering 
  \subfigure[]{
    \begin{overpic}[width=0.45\textwidth,clip]{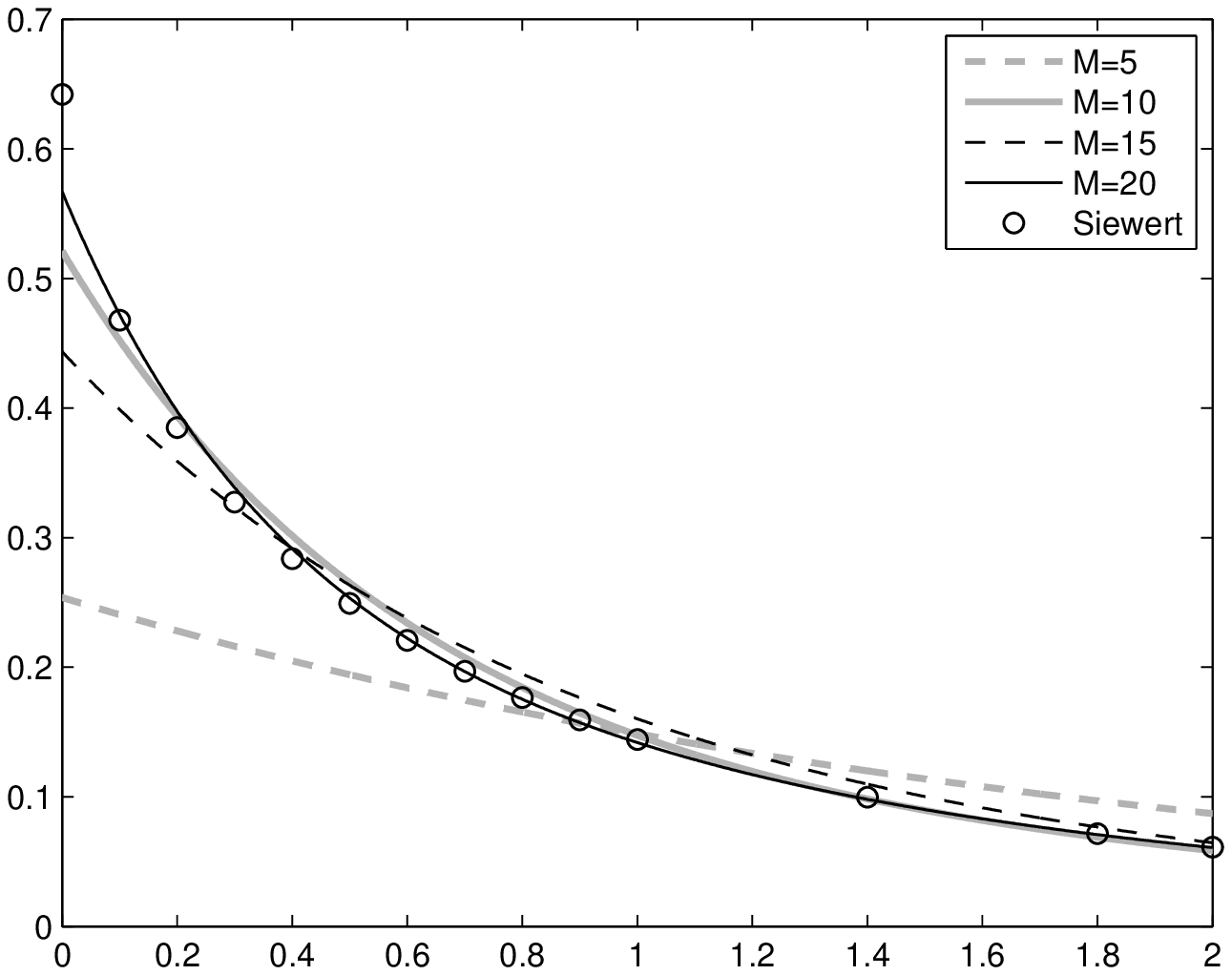}
        \put(-8,39){\small$\tilde{u}_d(\bar{y})$}
        \put(54,-1){\small$\bar{y}$}
    \end{overpic}}\qquad
  \subfigure[]{
    \begin{overpic}[width=0.45\textwidth,clip]{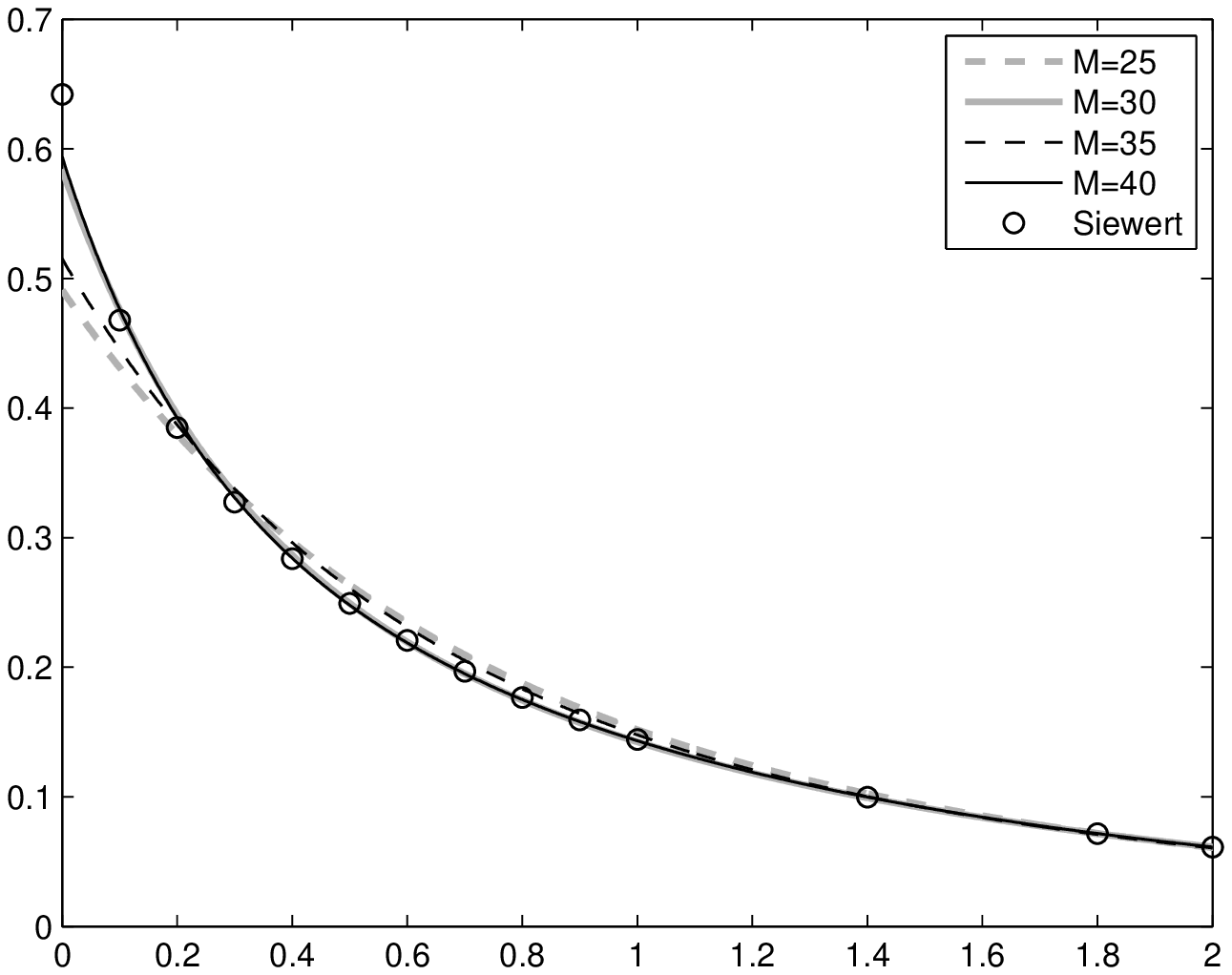}
        \put(-8,39){\small$\tilde{u}_d(\bar{y})$}
        \put(54,-1){\small$\bar{y}$}
    \end{overpic}}
  \subfigure[]{
    \begin{overpic}[width=0.45\textwidth,clip]{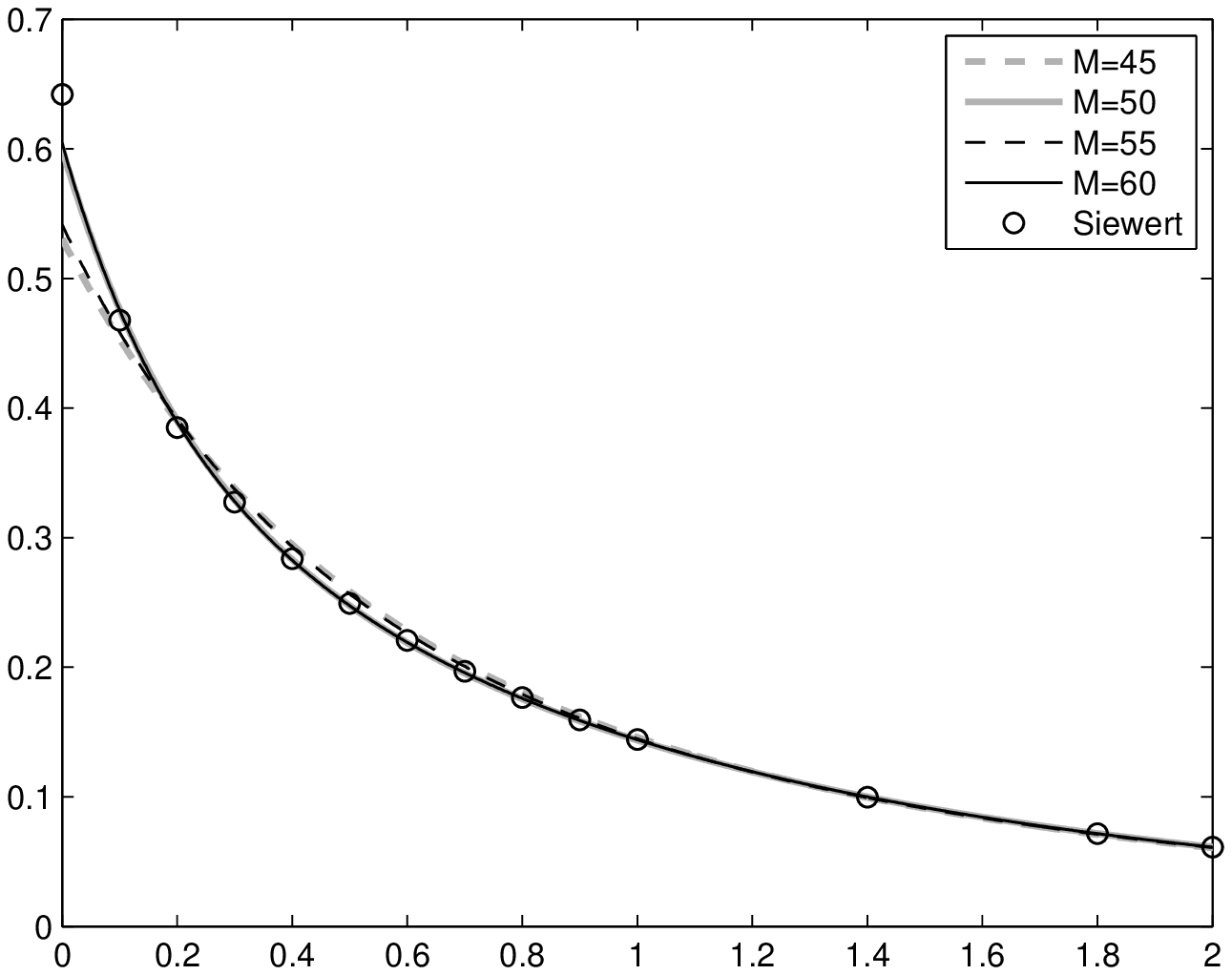}
        \put(-8,39){\small$\tilde{u}_d(\bar{y})$}
        \put(54,-1){\small$\bar{y}$}
    \end{overpic}}\qquad
  \subfigure[]{
    \begin{overpic}[width=0.45\textwidth,clip]{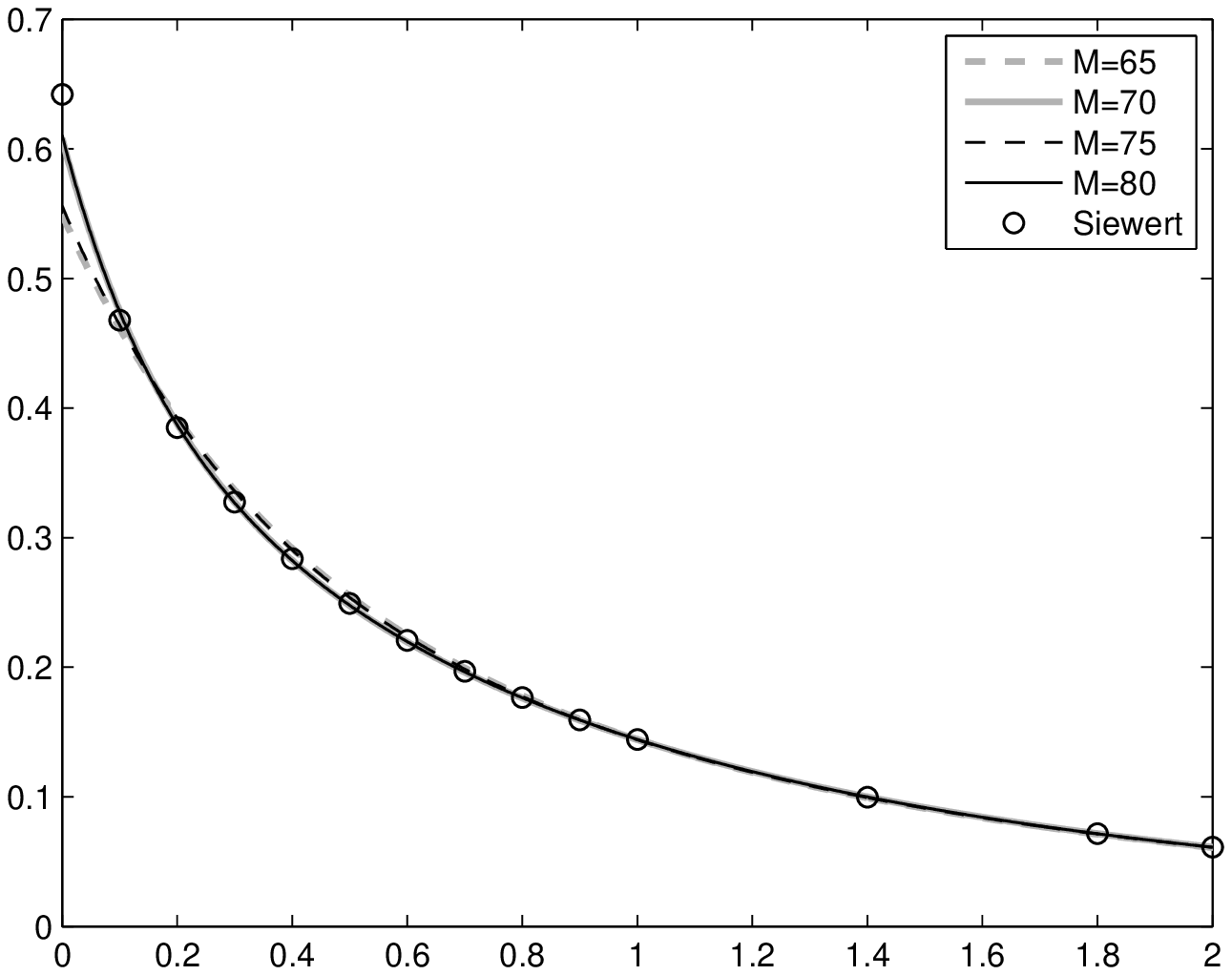}
        \put(-8,39){\small$\tilde{u}_d(\bar{y})$}
        \put(54,-1){\small$\bar{y}$}
    \end{overpic}}
  \caption{Profile of the defect velocity $\tilde{u}_d(\bar{y})$ of
  the linearized HME for different $M$ with $\chi=0.1$. The reference
  solution is Siewert's result in \cite{Siewert2001} for linearized
  BGK model.}
  \label{fig:diff_M_1}
\end{figure}

\begin{figure}[ht]
  \centering 
  \subfigure[]{
    \begin{overpic}[width=0.45\textwidth,clip]{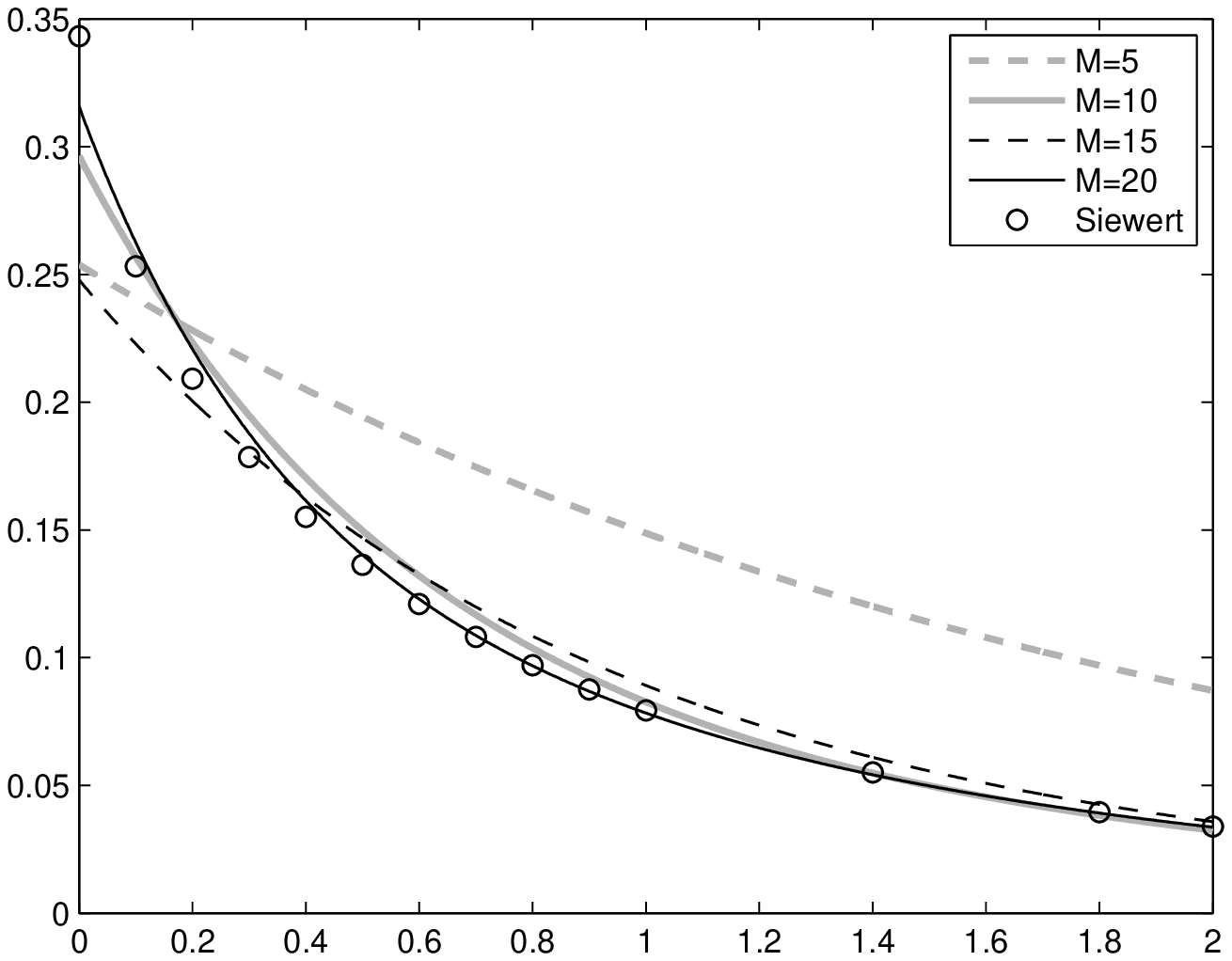}
        \put(-8,39){\small$\tilde{u}_d(\bar{y})$}
        \put(54,-1){\small$\bar{y}$}
    \end{overpic}}\qquad
  \subfigure[]{
\begin{overpic}[width=0.45\textwidth,clip]{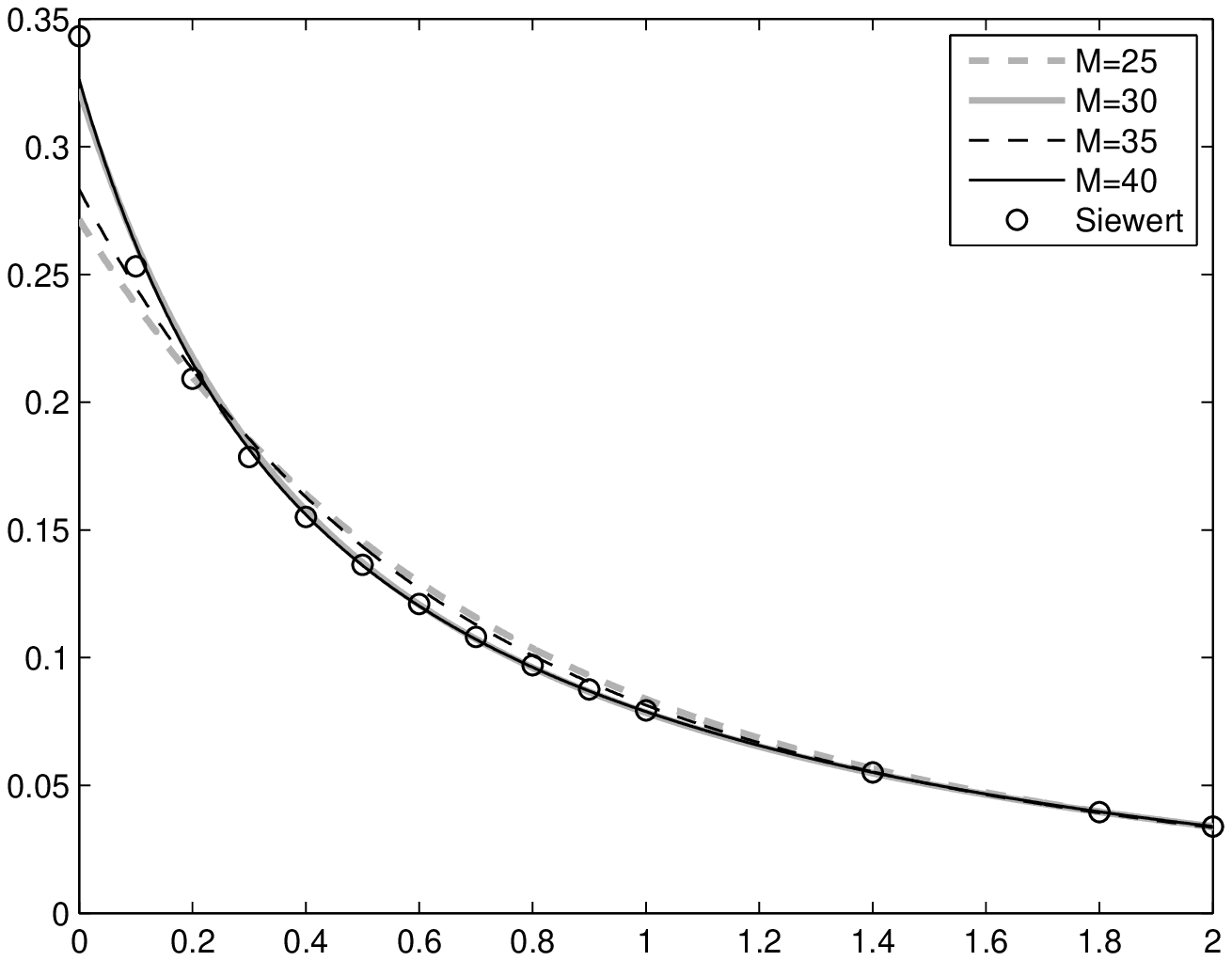}
        \put(-8,39){\small$\tilde{u}_d(\bar{y})$}
        \put(54,-1){\small$\bar{y}$}
    \end{overpic}}
  \subfigure[]{
\begin{overpic}[width=0.45\textwidth,clip]{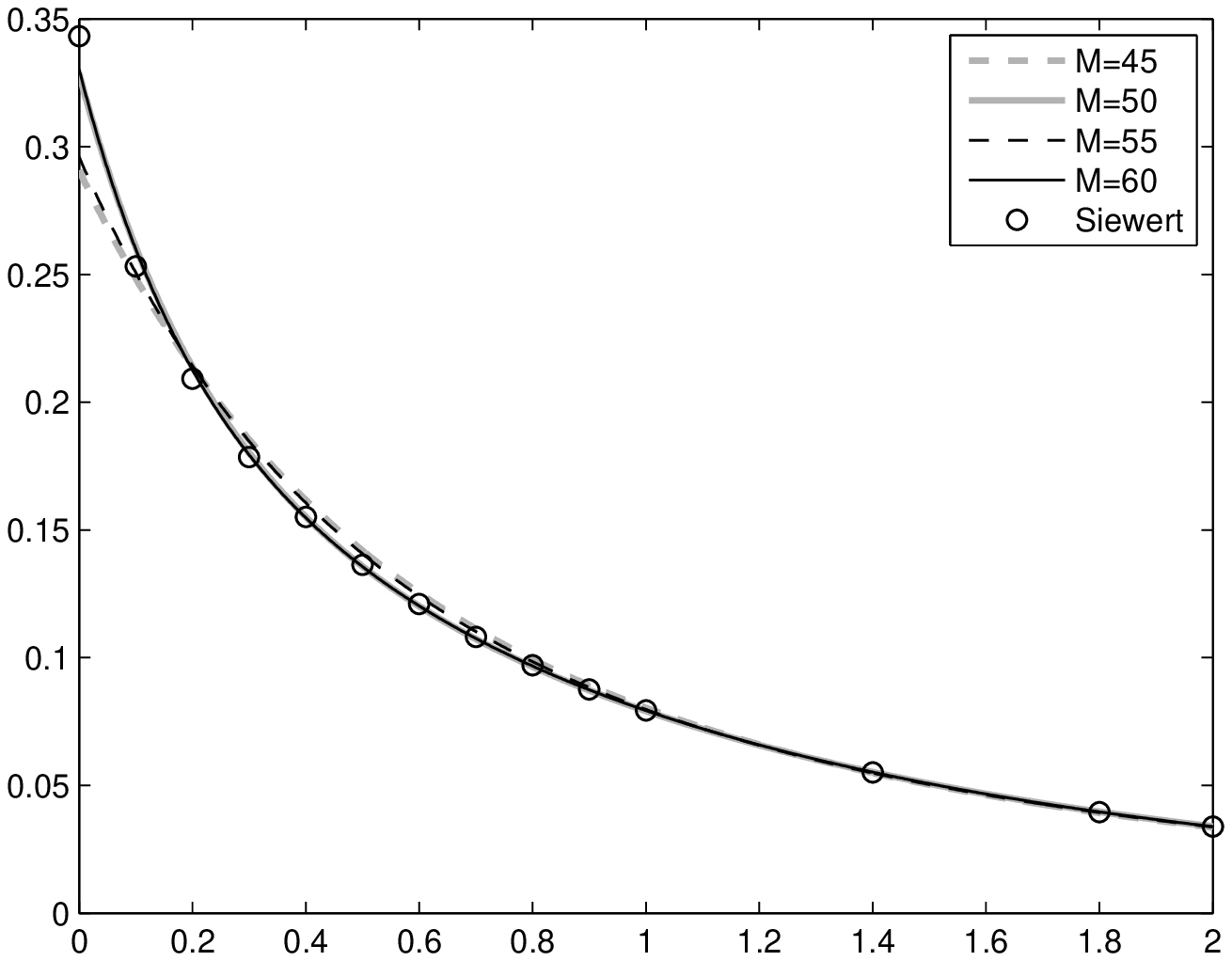}
        \put(-8,39){\small$\tilde{u}_d(\bar{y})$}
        \put(54,-1){\small$\bar{y}$}
    \end{overpic}}\qquad
  \subfigure[]{
\begin{overpic}[width=0.45\textwidth,clip]{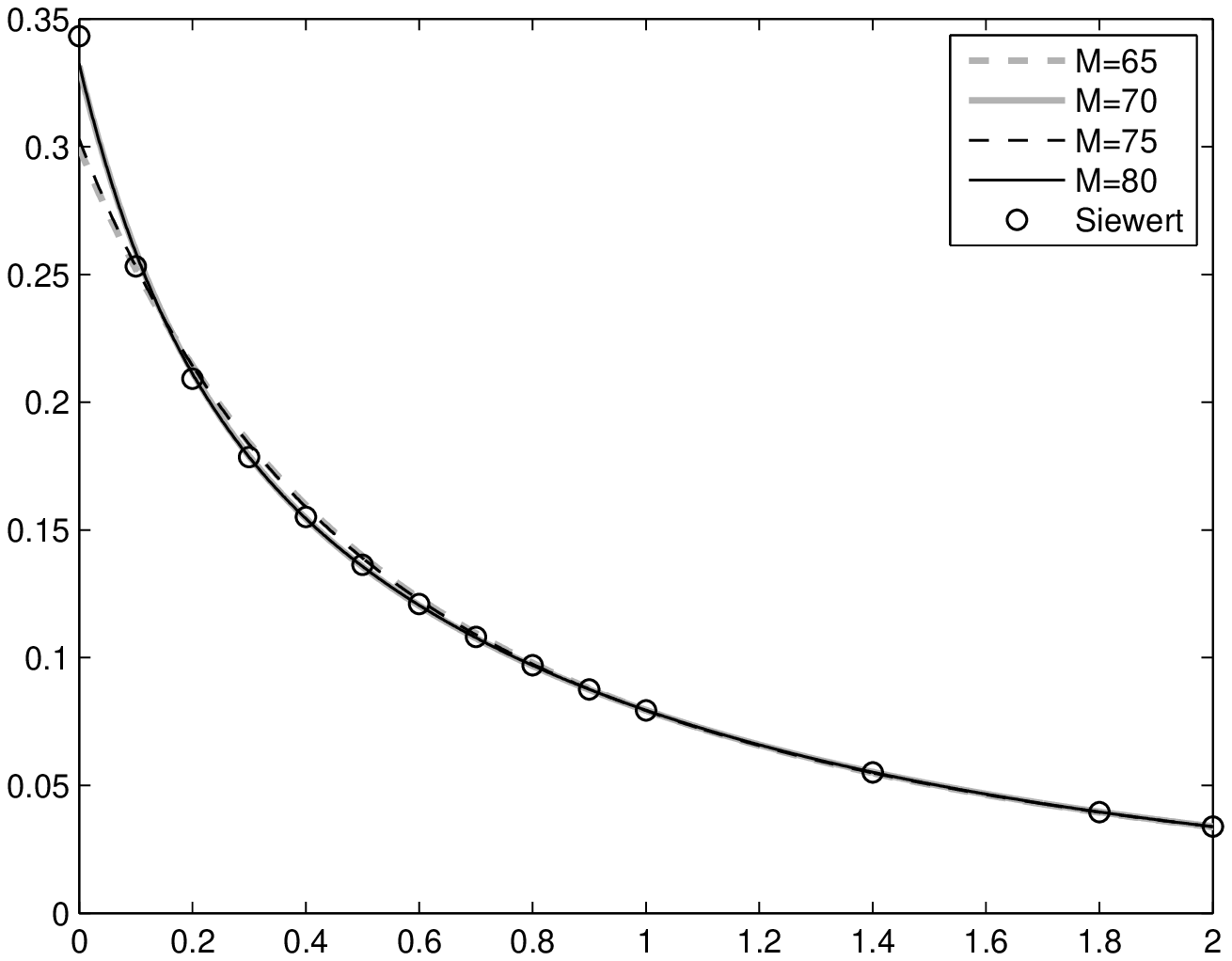}
        \put(-8,39){\small$\tilde{u}_d(\bar{y})$}
        \put(54,-1){\small$\bar{y}$}
    \end{overpic}}
  \caption{Profile of the defect velocity $\tilde{u}_d(\bar{y})$ of
  the linearized HME for different $M$ with $\chi=0.9$. The reference
  solution is Siewert's result in \cite{Siewert2001} for linearized
  BGK model.}
  \label{fig:diff_M_9}
\end{figure}

\begin{figure}[ht]
    \centering
    \begin{overpic}[width=0.9\textwidth,clip]{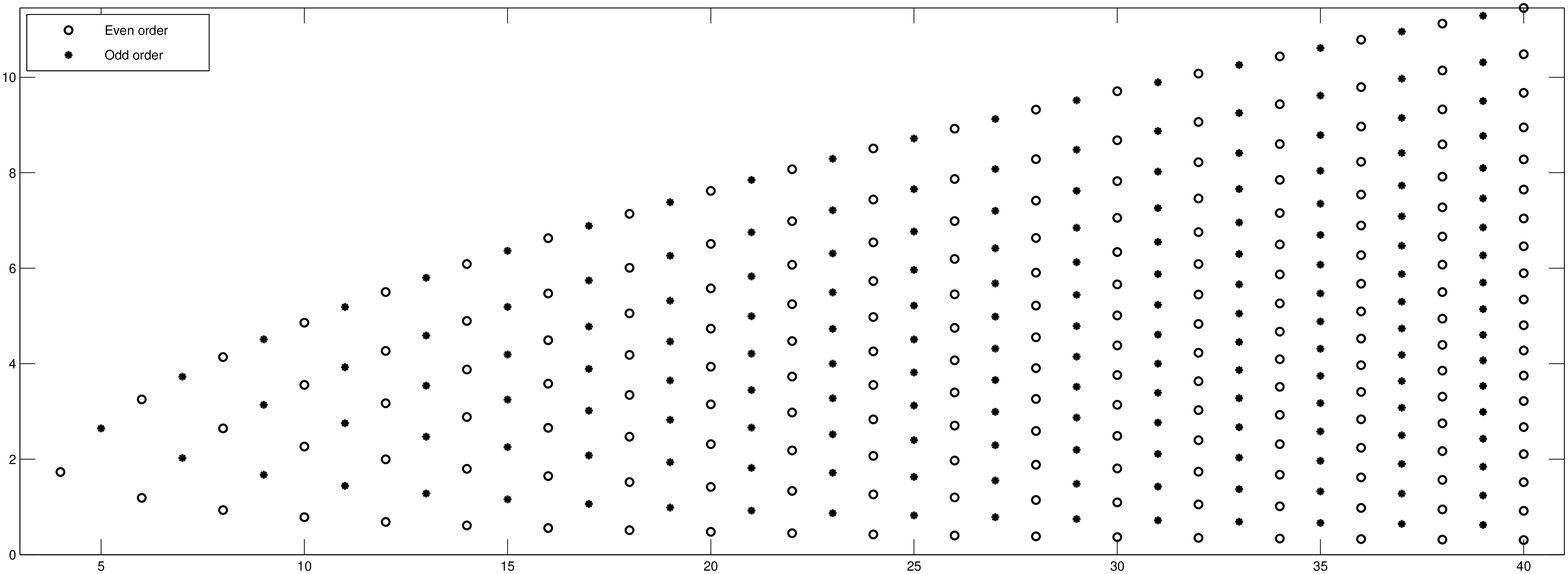}
        \put(-2,22){\small$\hat{\lambda}_i$}
        \put(52,-2){\small$M$}
    \end{overpic}
    \caption{\label{fig:hatlambdai}The value $\hat{\lambda}_i$ for
different $M$.}
\end{figure}
For the velocity defect $\tilde{u}_d(\bar{y})$, the analytical results
with $M$ ranging from $5$ to $80$ are presented in Fig.
\ref{fig:diff_M_1} for $\chi = 0.1$ and Fig. \ref{fig:diff_M_9} for
$\chi = 0.9$, which are compared with the Siewert's numerical results
in \cite{Siewert2001} for the linearized BGK model.  It is clear that
the results of the linearized HME converge to Siewert's result as $M$
increasing, which is consistent with the theoretical analysis in Sec.
\ref{sec:convergence}.
Meanwhile, one can find that the defect velocity of even order
converges faster to the reference solution than that of odd order.
This can be understood based on the smallest width of the boundary
layer, which is represented by $w_M:=\min\{\hat{\lambda}_i:
i=1,\cdots,\lfloor \frac{M}{2}-1\rfloor\}$. The smaller $w_M$, the
closer of the defect velocity of the linearized HME to the reference
solution.  Fig.  \ref{fig:hatlambdai} gives all the
$\hat{\lambda}_i$ for $M$ ranging from $3$ to $40$. One can observe
that $w_M$ for even $M$ is quite smaller than that for the adjacent
odd $M$.

Moreover, comparing with Fig. \ref{fig:diff_M_1} and
\ref{fig:diff_M_9}, one can find that for a given $M$, the relative
error in Fig. \ref{fig:diff_M_1} is a little larger than that of Fig.
\ref{fig:diff_M_9}. Actually, for smaller $\chi$, the diffusion
interaction between gas and the wall turns weak, then the distribution
function is expected to be more far from the equilibrium, which
indicates more moment is needed.

For the slip coefficient $\zeta$, the analytical results for different
$M$ are plotted in Fig. \ref{fig:zeta}. Similar convergence can be
readily observed in Fig. \ref{fig:zeta}.
All the phenomena observed in Fig. \ref{fig:diff_M_1} and
\ref{fig:diff_M_9} are also valid in Fig. \ref{fig:zeta}.
\begin{figure}[ht]
  \centering 
  \subfigure[$\chi = 0.1$]{
    \begin{overpic}[width=0.3\textwidth,clip]{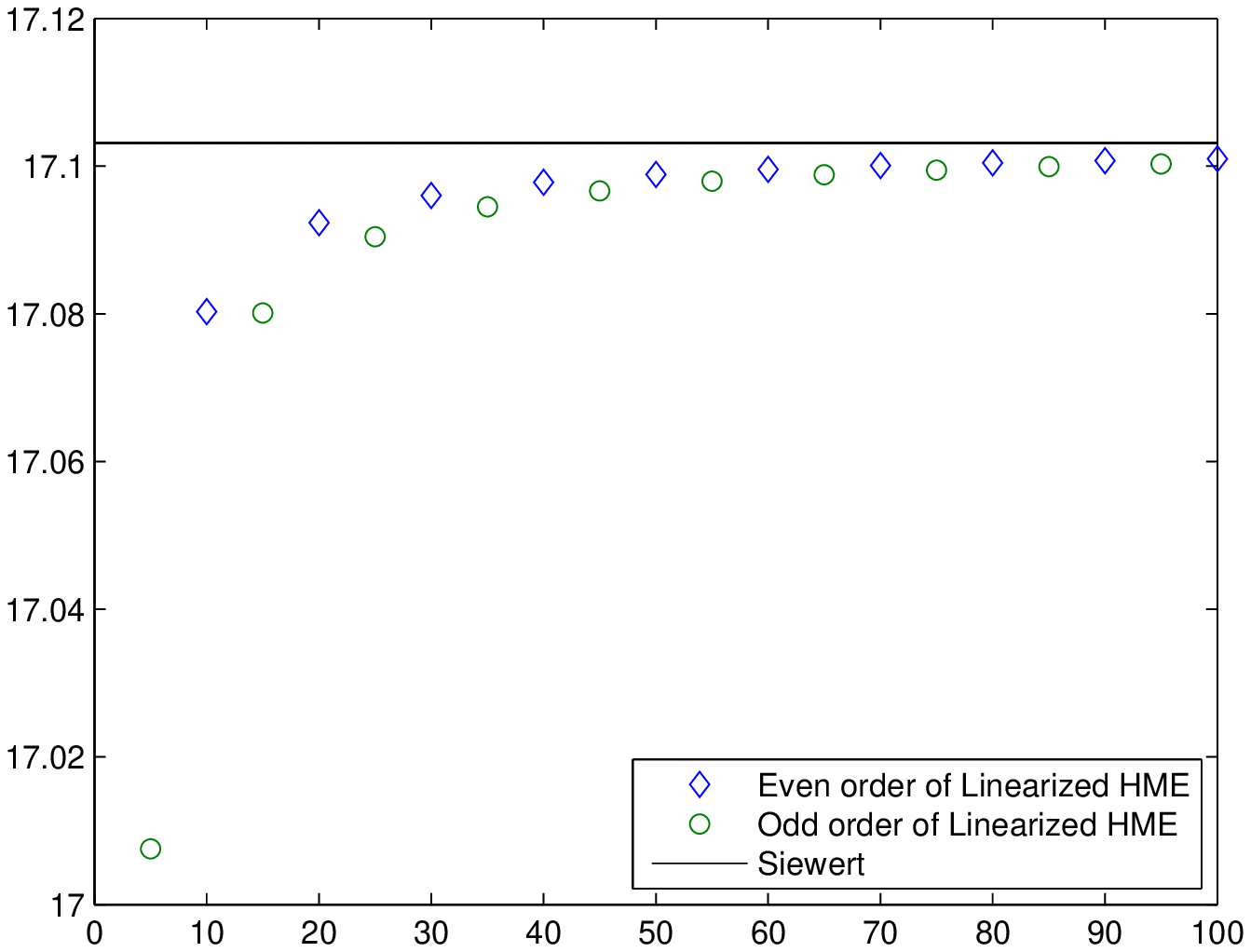}
      \put(0,41){\small$\zeta$}
      \put(50,0){\tiny$M$}
      \end{overpic}}\quad
  \subfigure[$\chi = 0.3$]{
    \begin{overpic}[width=0.3\textwidth,clip]{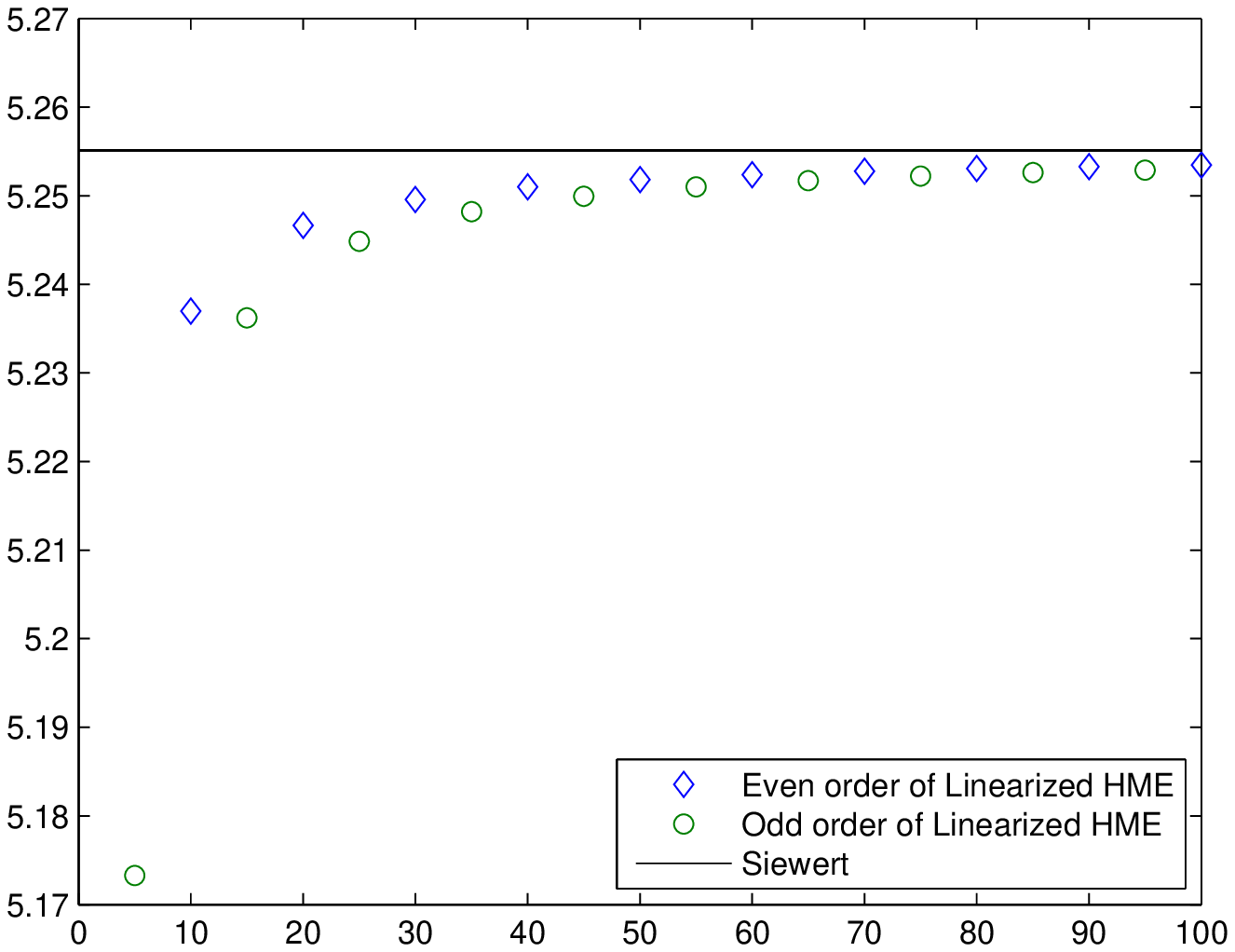}
      \put(0,41){\small$\zeta$}
      \put(50,0){\tiny$M$}
      \end{overpic}}\quad
  \subfigure[$\chi = 0.5$]{
    \begin{overpic}[width=0.3\textwidth,clip]{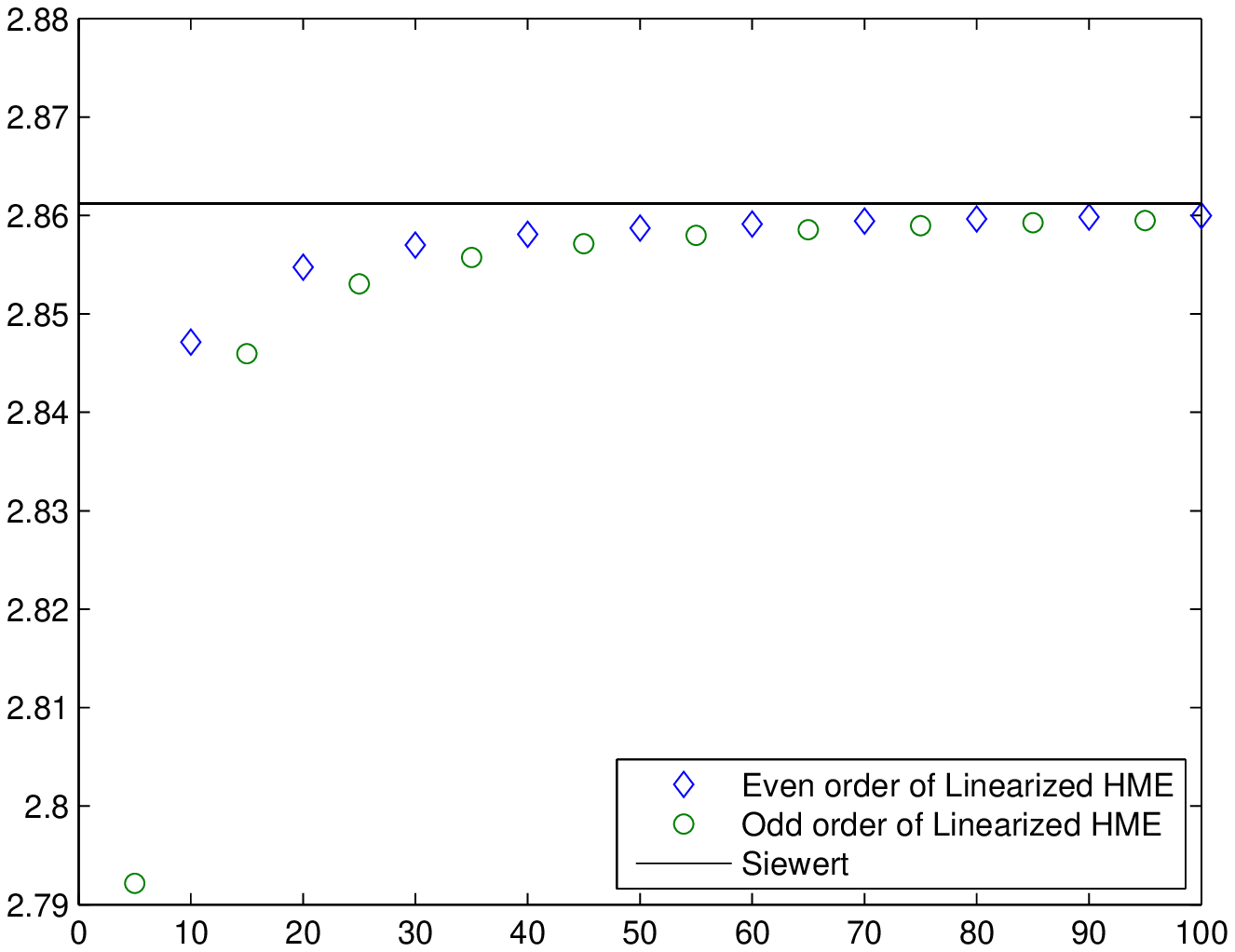}
      \put(0,41){\small$\zeta$}
      \put(50,0){\tiny$M$}
      \end{overpic}}
  \subfigure[$\chi = 0.7$]{
    \begin{overpic}[width=0.3\textwidth,clip]{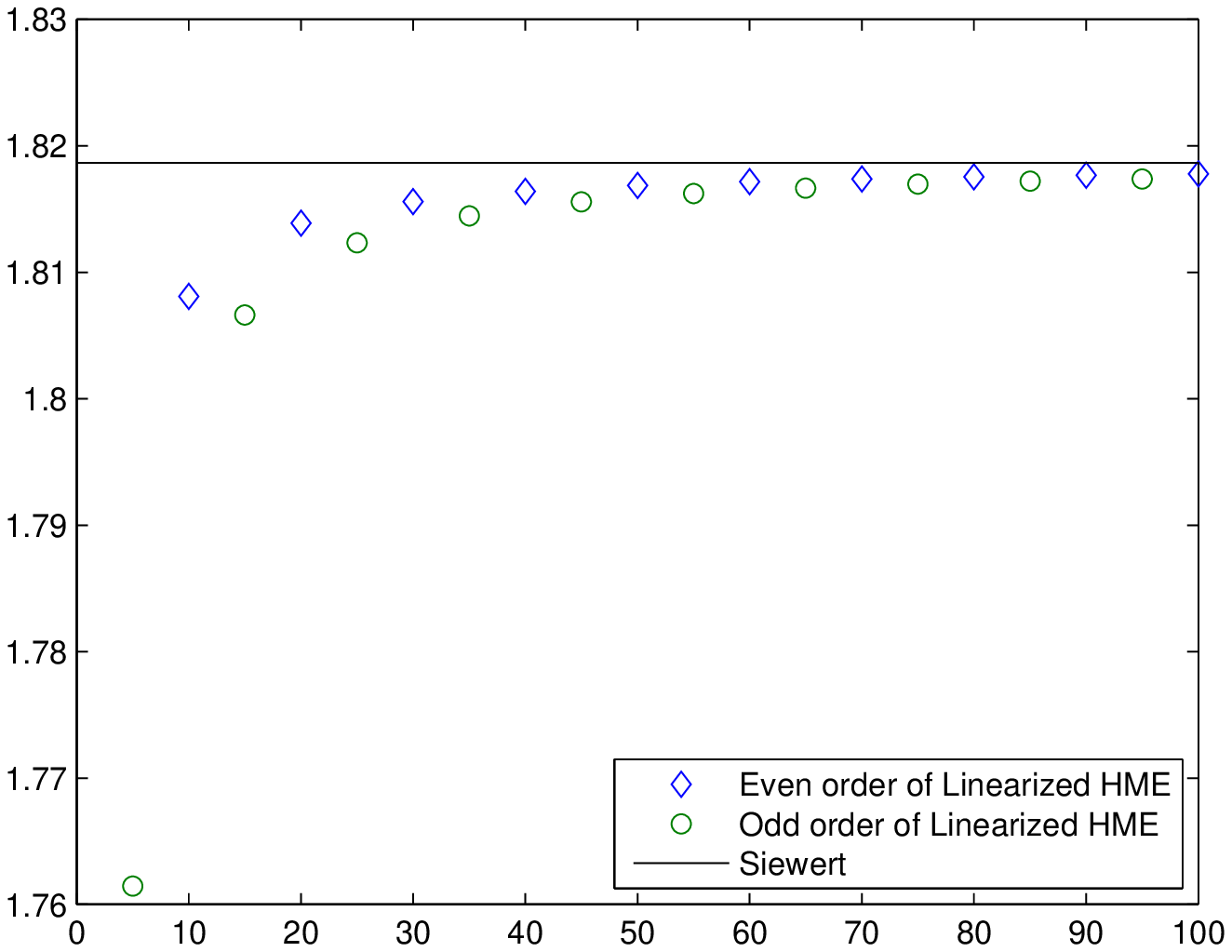}
      \put(0,41){\small$\zeta$}
      \put(50,0){\tiny$M$}
      \end{overpic}}\quad
  \subfigure[$\chi = 0.9$]{
    \begin{overpic}[width=0.3\textwidth,clip]{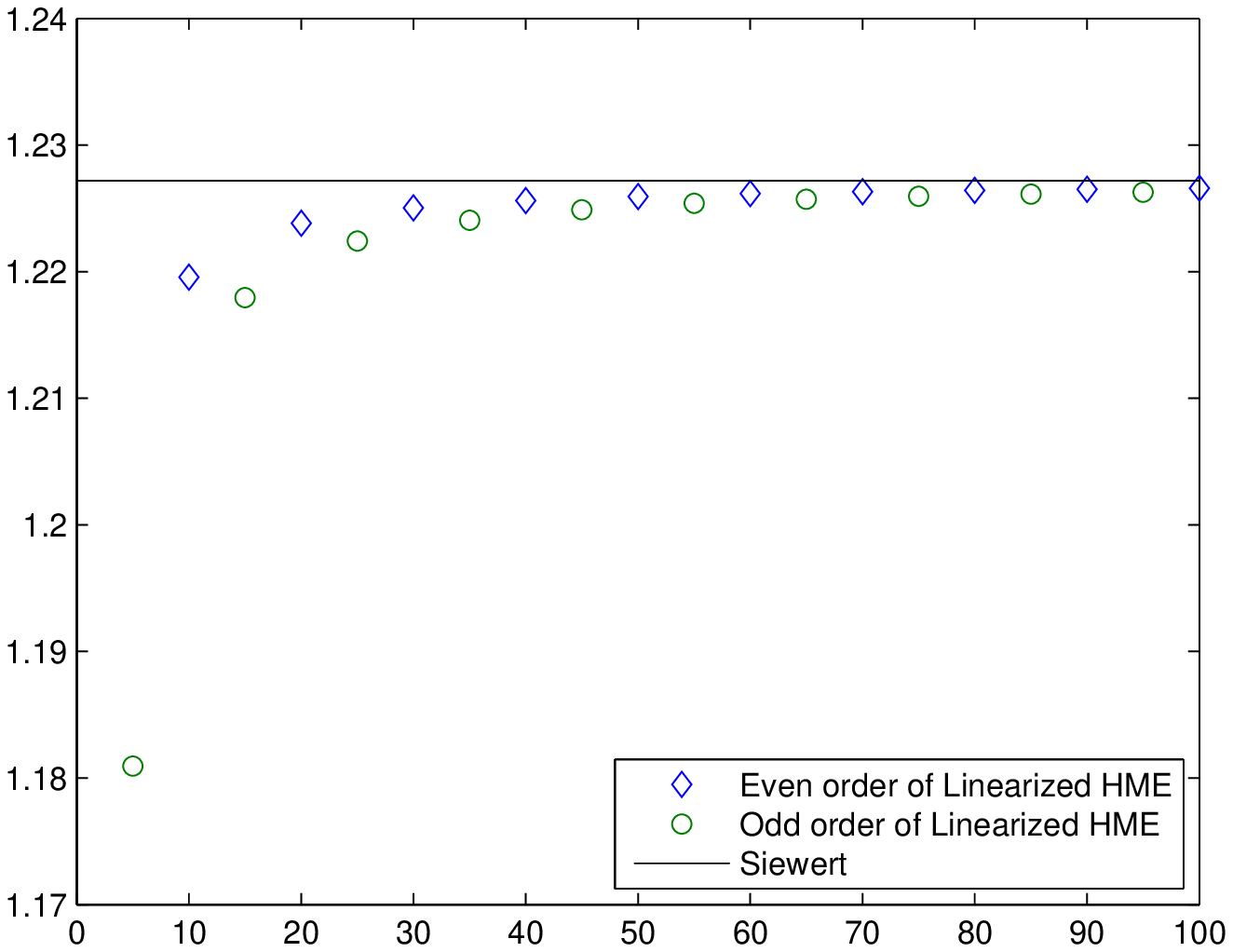}
      \put(0,41){\small$\zeta$}
      \put(50,0){\tiny$M$}
      \end{overpic}}\quad
  \subfigure[$\chi=1.0$]{
    \begin{overpic}[width=0.3\textwidth,clip]{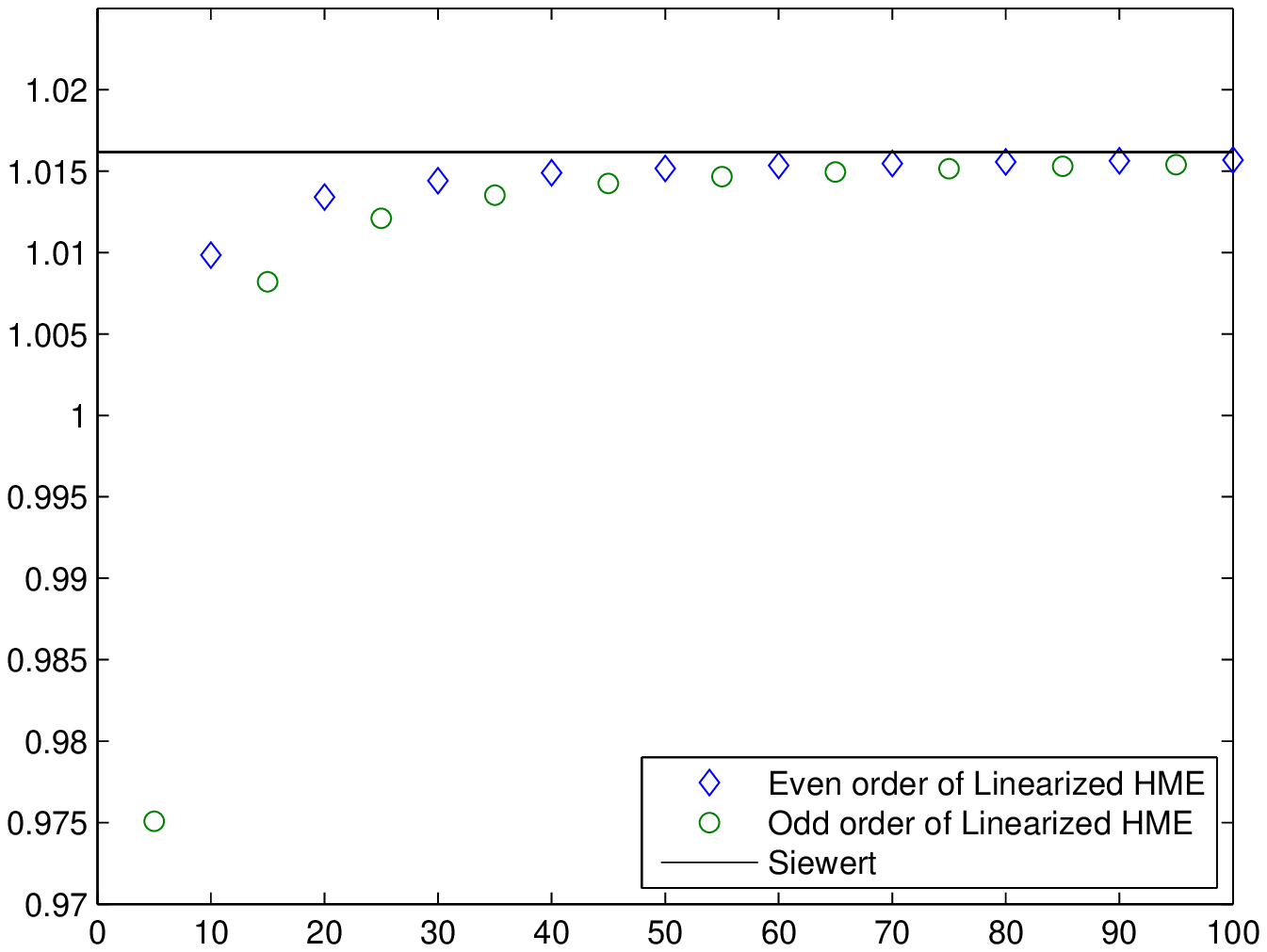}
      \put(0,41){\small$\zeta$}
      \put(50,0){\tiny$M$}
      \end{overpic}}
  \caption{Values of the slip coefficient $\zeta$ for different $M$
      and $\chi$. The reference solution is Siewert's result in
      \cite{Siewert2001} for linearized BGK model.}
      \label{fig:zeta}
\end{figure}

\subsection{Knudsen layer}
In this subsection, we study the Knudsen layer of Kramers' problem in
three aspects. The first one is the profile of the normalized velocity
$\tilde{u}(\bar{y})$ \eqref{eq:refer_velo}. For convergence, here we
also fix $\Kn$ as a constant $1 /\sqrt{2}$.  Fig.
\ref{fig:diff_chi} gives the profile of $\tilde{u}(\bar{y})$ in
\eqref{eq:refer_velo} of linearized HME with $M=8$ and $M=9$.
Compared with numerical results of linearized Boltzmann equation in
\cite{Loyalka1975}, the good agreement of the solutions of the
linearized HME in Fig. \ref{fig:diff_chi} indicates the moment system
with a small $M$ is good enough to describe the velocity profile in
the Knudsen layer.
Moreover, the value of $\tilde{u}(\bar{y})$ increases, as $\chi$
decreasing. This is because the coefficients $\hat{c}_i$ and $c_0$ are
dependent on $\frac{2-\chi}{\chi}$.
As discussed in the Section \ref{sec:convergenceNum}, the diffusion
interaction between gas and the wall is weaker for smaller $\chi$.
\begin{figure}[ht]
    \centering
    \subfigure[$M=8$]{%
        \begin{overpic}[width=0.45\textwidth,clip]{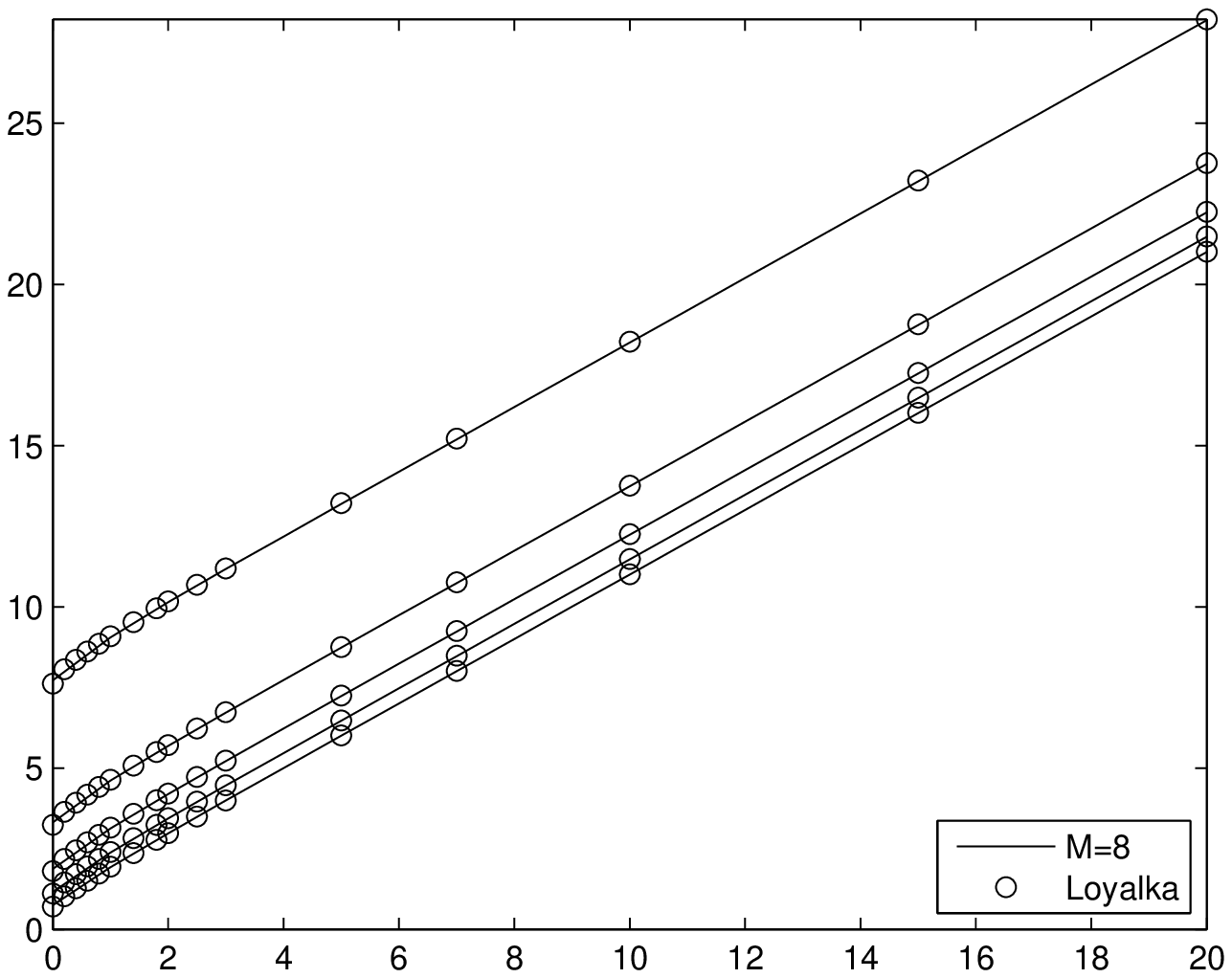}
            \put(-6,47){\small$\tilde{u}(\bar{y})$}
            \put(82,-3){\small$\bar{y}$}
            \put(60,60){\small$\downarrow$}
            \put(56,64){\small$\chi=0.2$}
            \put(54,46){\small$\downarrow$}
            \put(50,50){\small$\chi=0.4$}
            \put(54,34){$\uparrow$}
            \put(48,30){\small$\chi=0.6$}
            \put(65,38){$\uparrow$}
            \put(60,34){\small$\chi=0.8$}
            \put(80,44){$\uparrow$}
            \put(76,40){\small$\chi=1$}
        \end{overpic}
    }\qquad
    \subfigure[$M=9$]{%
        \begin{overpic}[width=0.45\textwidth,clip]{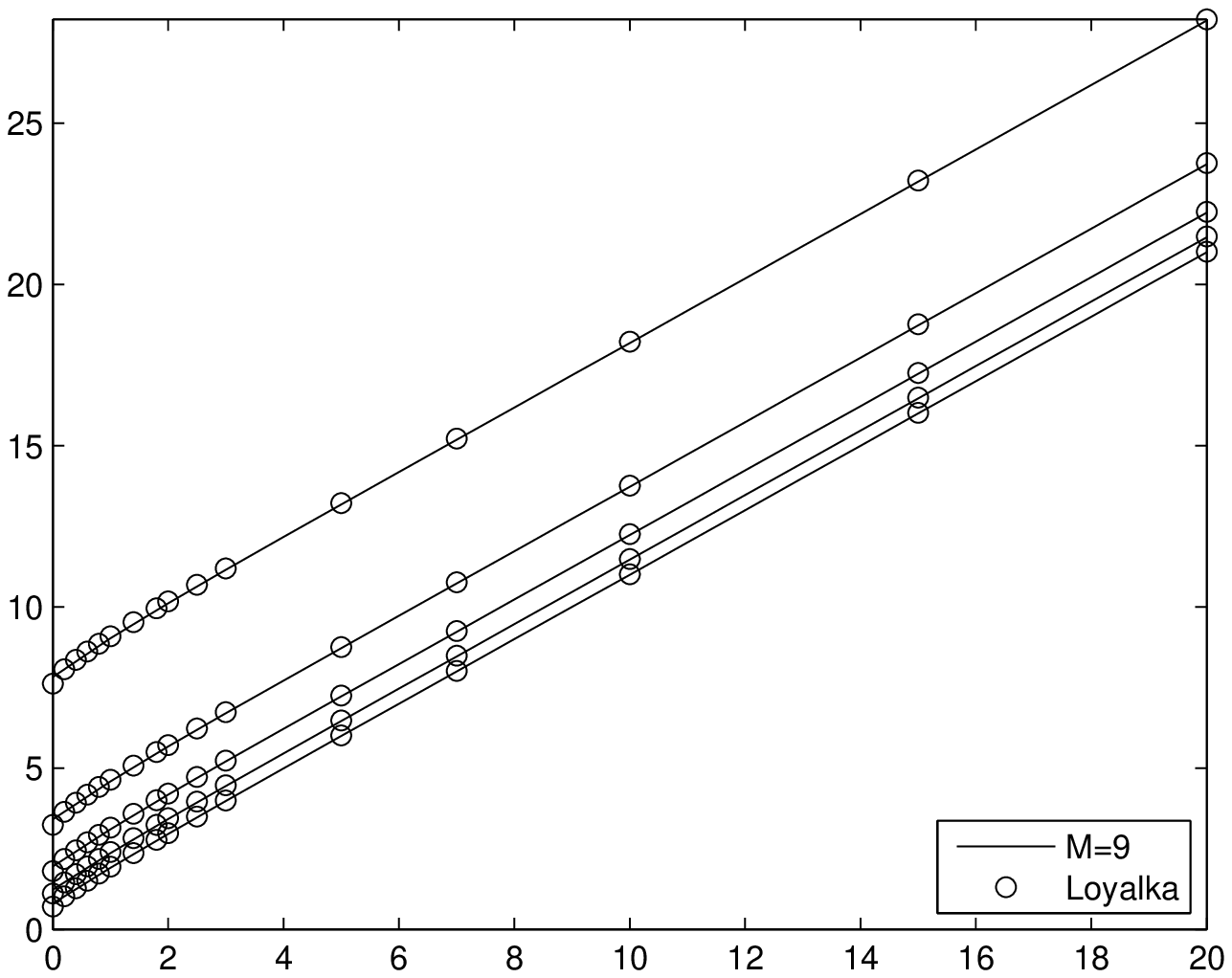}
            \put(-6,47){\small$\tilde{u}(\bar{y})$}
            \put(82,-3){\small$\bar{y}$}
            \put(60,60){\small$\downarrow$}
            \put(56,64){\small$\chi=0.2$}
            \put(54,46){\small$\downarrow$}
            \put(50,50){\small$\chi=0.4$}
            \put(54,34){$\uparrow$}
            \put(48,30){\small$\chi=0.6$}
            \put(65,38){$\uparrow$}
            \put(60,34){\small$\chi=0.8$}
            \put(80,44){$\uparrow$}
            \put(76,40){\small$\chi=1$}
        \end{overpic}
    }
    \caption{ Profile of $\tilde{u}(\bar{y})$ of the linearized HME for
    different accommodation number $\chi$. The reference solution is
    Loyalka's result in \cite{Loyalka1975}.}
    \label{fig:diff_chi}
\end{figure}

\begin{figure}[!htb]
  \centering
  \subfigure[]{
    \begin{overpic}[width=0.45\textwidth,clip]{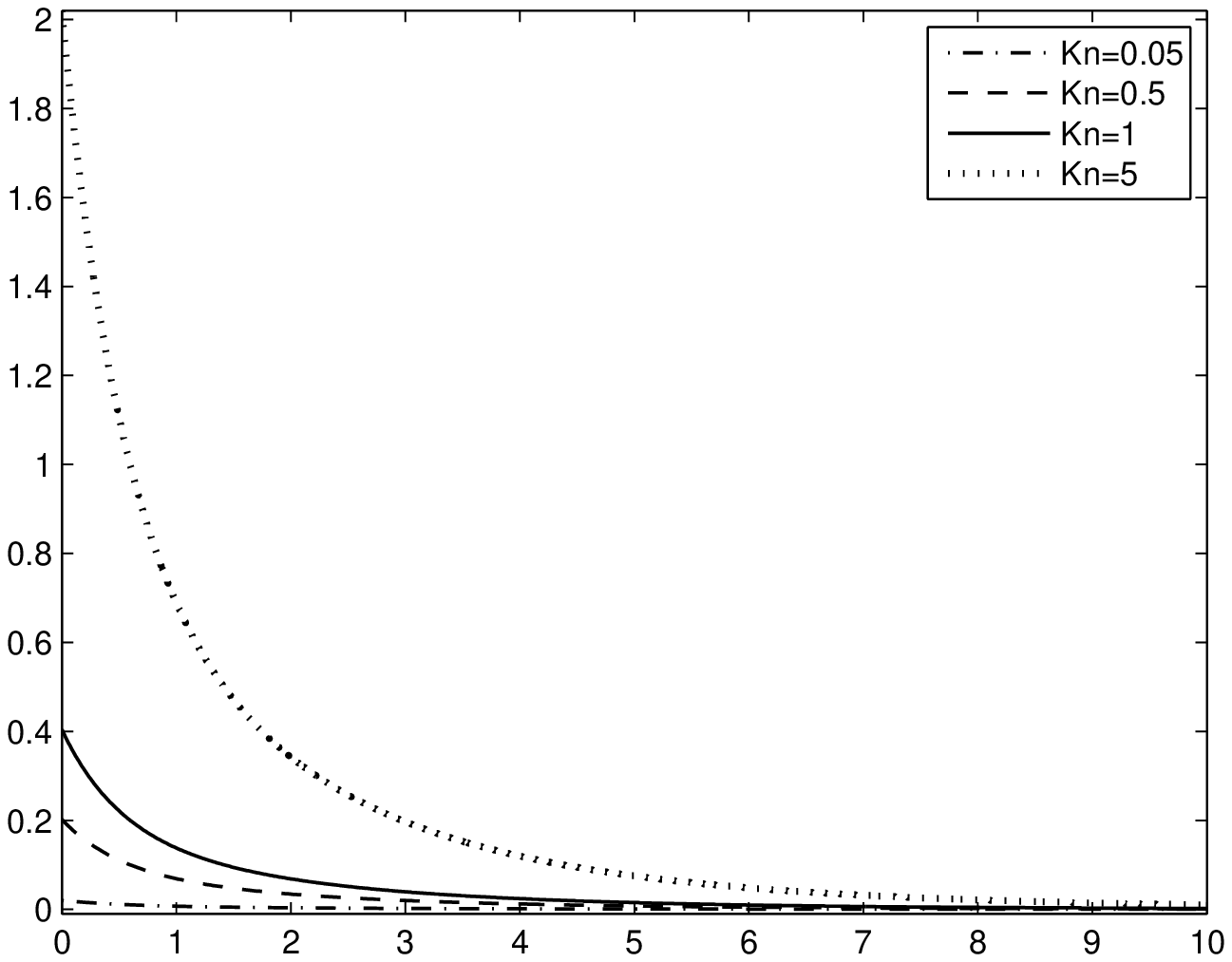}
      \put(-8,42){\small$\tilde{u}_d(\bar{y})$}
      \put(60,-3){\small$\bar{y}/\Kn$}
      \end{overpic}}\qquad
  \subfigure[]{
    \begin{overpic}[width=0.45\textwidth,clip]{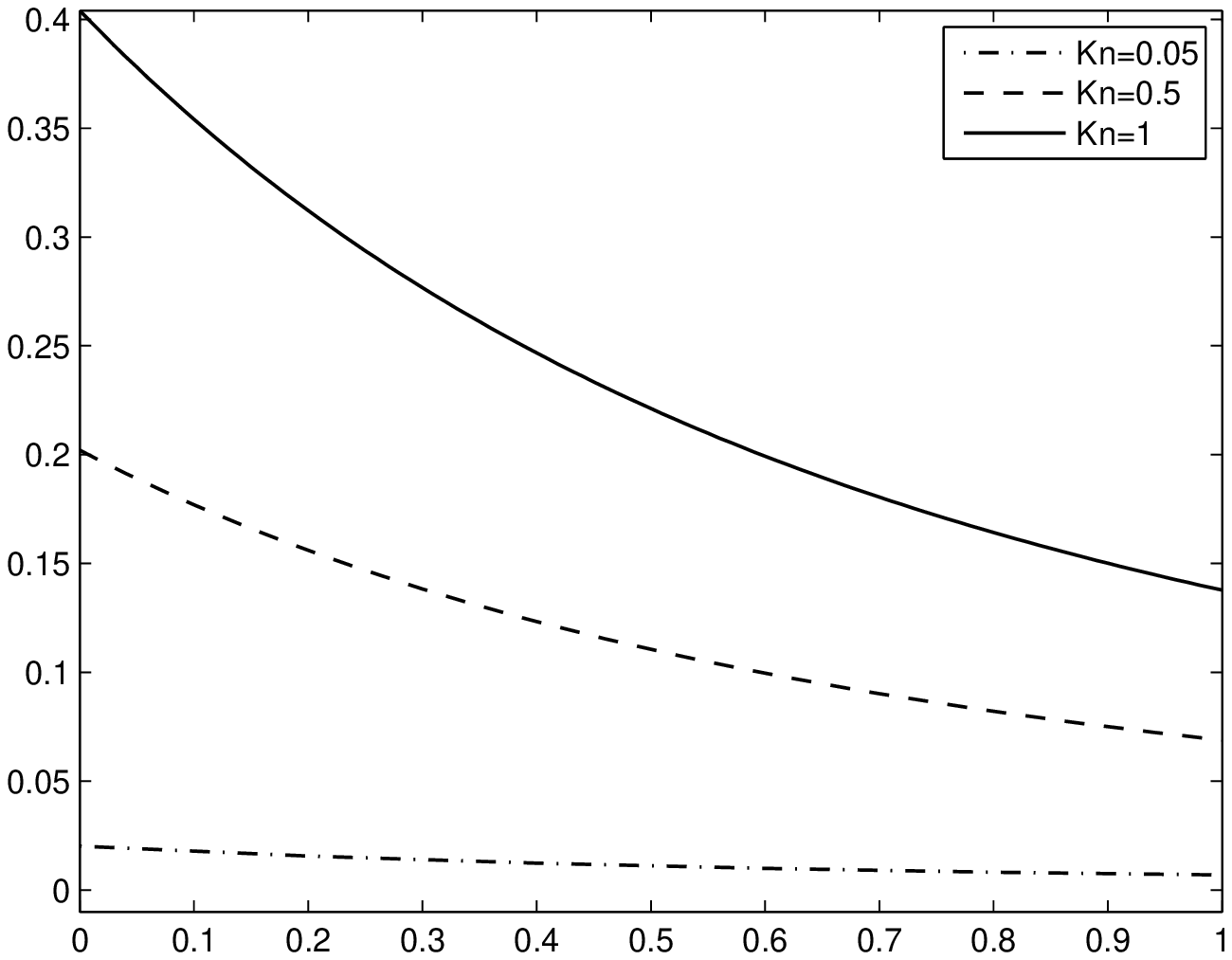}
      \put(-8,42){\small$\tilde{u}_d(\bar{y})$}
      \put(60,-3){\small$\bar{y}/\Kn$}
      \end{overpic}}
      \caption{Profile of $\tilde{u}_d(\bar{y})$ for different $\Kn$.}
  \label{fig:diff_Kn}
\end{figure}
The second one is the profile of the velocity defect
$\tilde{u}_d(\bar{y})$ in \eqref{eq:defe_velo}.
Fig. \ref{fig:diff_Kn} shows the profile of $\tilde{u}_d(\bar{y})$
with $M=20$ for different Knudsen number. 
The thickness of the Knudsen layer largens as $\Kn$ increasing and the
strength of of the Knudsen layer enhances. In practical application,
more moments are needed for large $\Kn$.

The third one is the effective viscosity. The Navier-Stokes law
indicates $\sigma_{12}=-\mu\pd{u}{y}$.  However, in the Knudsen layer,
the Navier-Stokes does not hold anymore. To describe 
the non-Newtonian behavior inherent in the Knudsen layer, we formally
write the Navier-Stokes law on the shear stress $\sigma_{12}$ as
\begin{equation}\label{eq:effective}
    \sigma_{12} = -\mu_{\mathrm{eff}}\pd{u}{y},
\end{equation}
where $\mu_{\mathrm{eff}}$ is called the ``effective viscosity''.
Since the shear stress $\sigma_{12}$ is constant in the Kramers'
problem, we have
\begin{equation}
    \frac{\mu_{\mathrm{eff}}}{\mu} 
    = -\left(\frac{\sigma_{12}}{\partial u / \partial y}\right) 
    \Big{/} \left(\frac{\lambda p_0}{\sqrt{\theta_0}}\right) 
    = - \frac{1}{\Kn} \frac{\bar{\sigma}_{12}}{\partial \bar{u} / \partial\bar{y}}
    = \frac{1}{\partial \tilde{u} / \partial\bar{y}}.
\end{equation}
Noticing the definition of the normalized velocity
\eqref{eq:refer_velo}, one can directly calculate
\begin{equation}\label{eq:eff_HME}
    \mu_{\mathrm{eff}} = \frac{\mu}{1 + \sum_{i = 1}^{\lfloor
        \frac{M-2}{2} \rfloor} c_i
        \exp\left(-\frac{\bar{y}}{\hat{\lambda}_i\Kn}\right)}, 
    \quad c_i =
    - \frac{2\hat{c}_i}{\hat{\lambda}_i \bar{\sigma}_{12}}.
\end{equation}

\begin{figure}[htb]
  \centering
  \subfigure[]{
        \begin{overpic}[width=0.45\textwidth,clip]{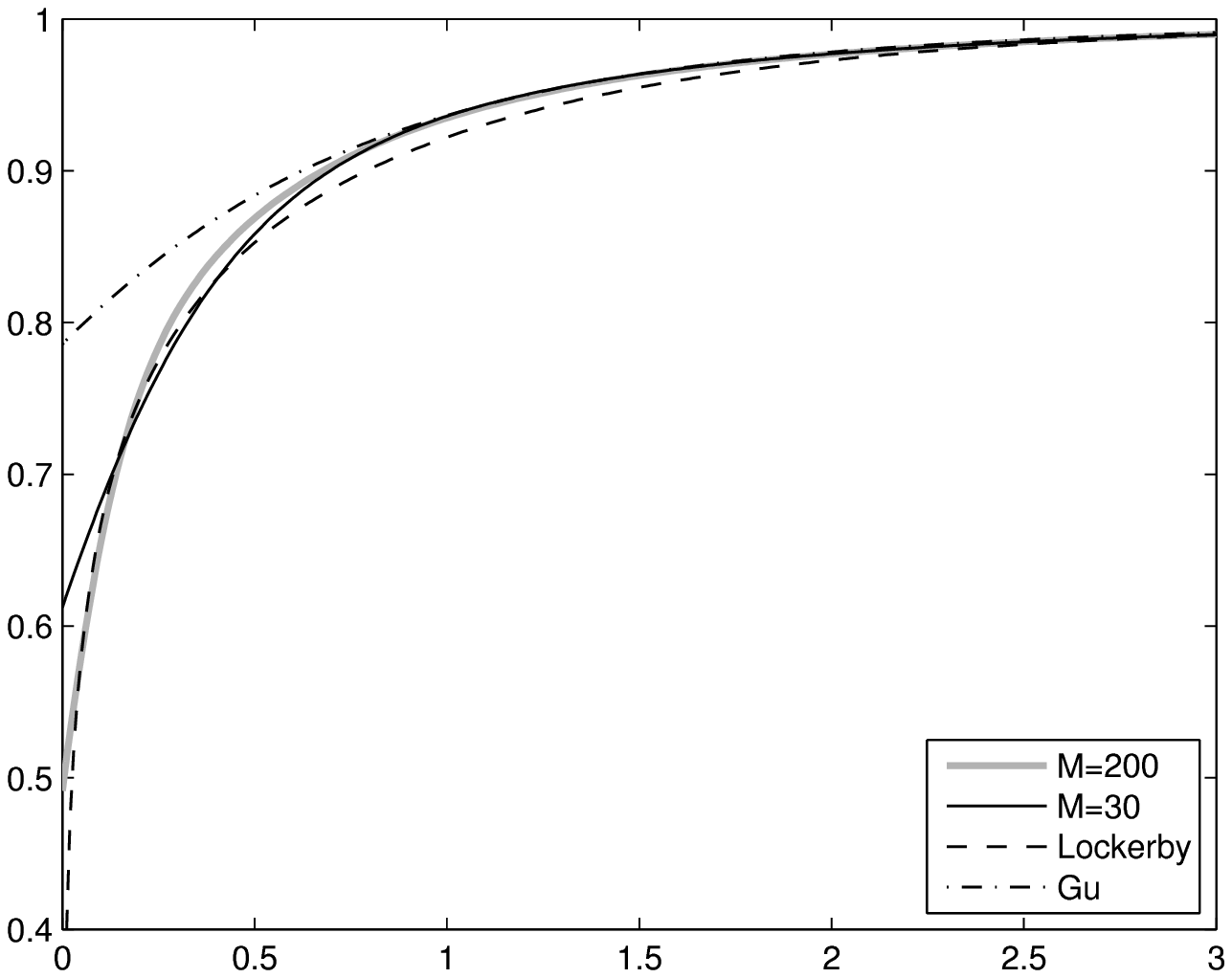}
            \put(-7,42){\small$\frac{\mu_{\mathrm{eff}}}{\mu}$}
            \put(60,-4){\small$\bar{y}$}
  \end{overpic}}
  \qquad
  \subfigure[]{
    \begin{overpic}[width=0.45\textwidth,clip]{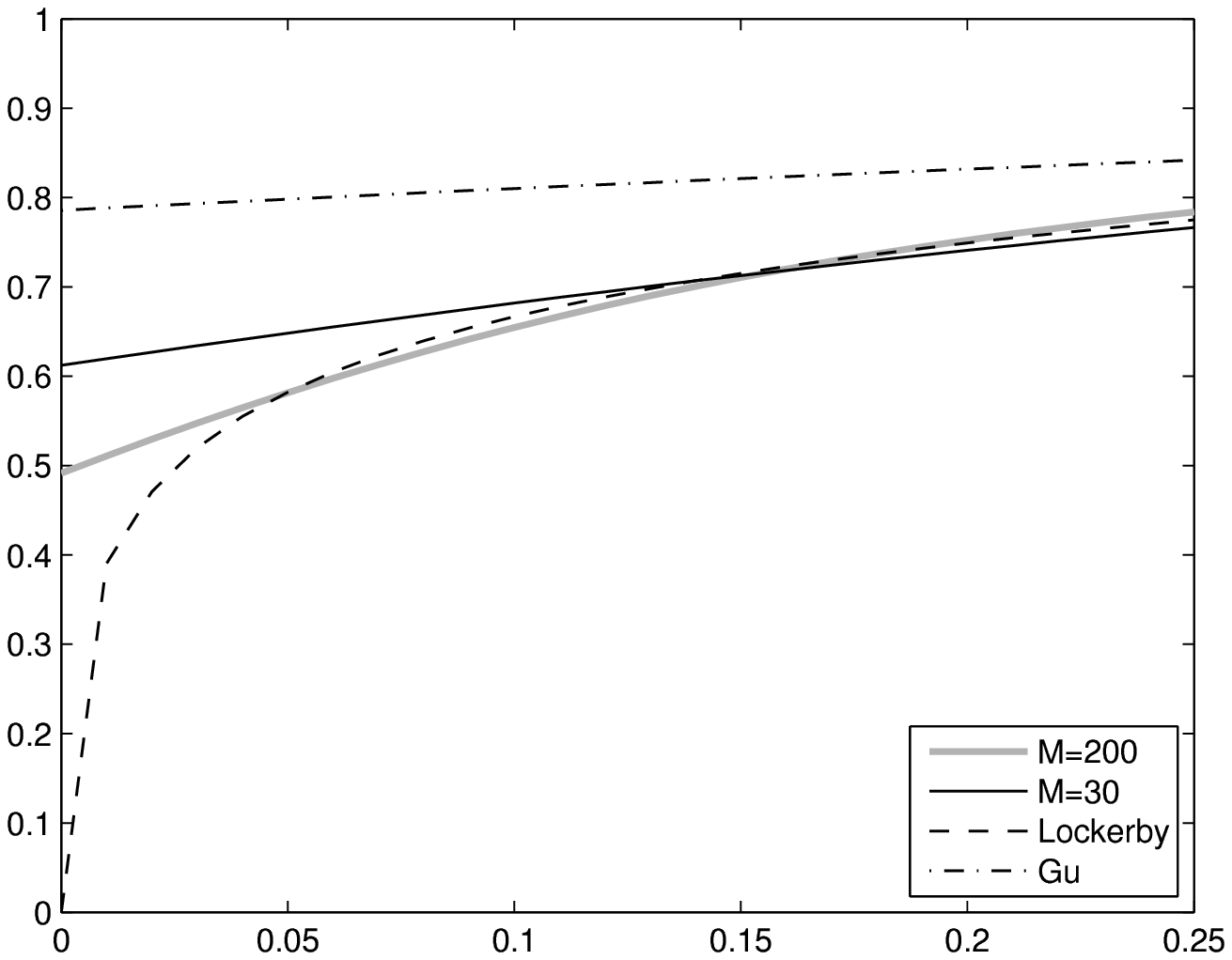}
            \put(-7,42){\small$\frac{\mu_{\mathrm{eff}}}{\mu}$}
            \put(60,-4){\small$\bar{y}$}
      \end{overpic}}
  \caption{Effective viscosity $\mu_{\mathrm{eff}}$ with different
    kinetic model.} 
  \label{fig:eff_vis}
\end{figure}

\begin{figure}[htb]
  \centering{
    \begin{overpic}[width=0.45\textwidth,clip]{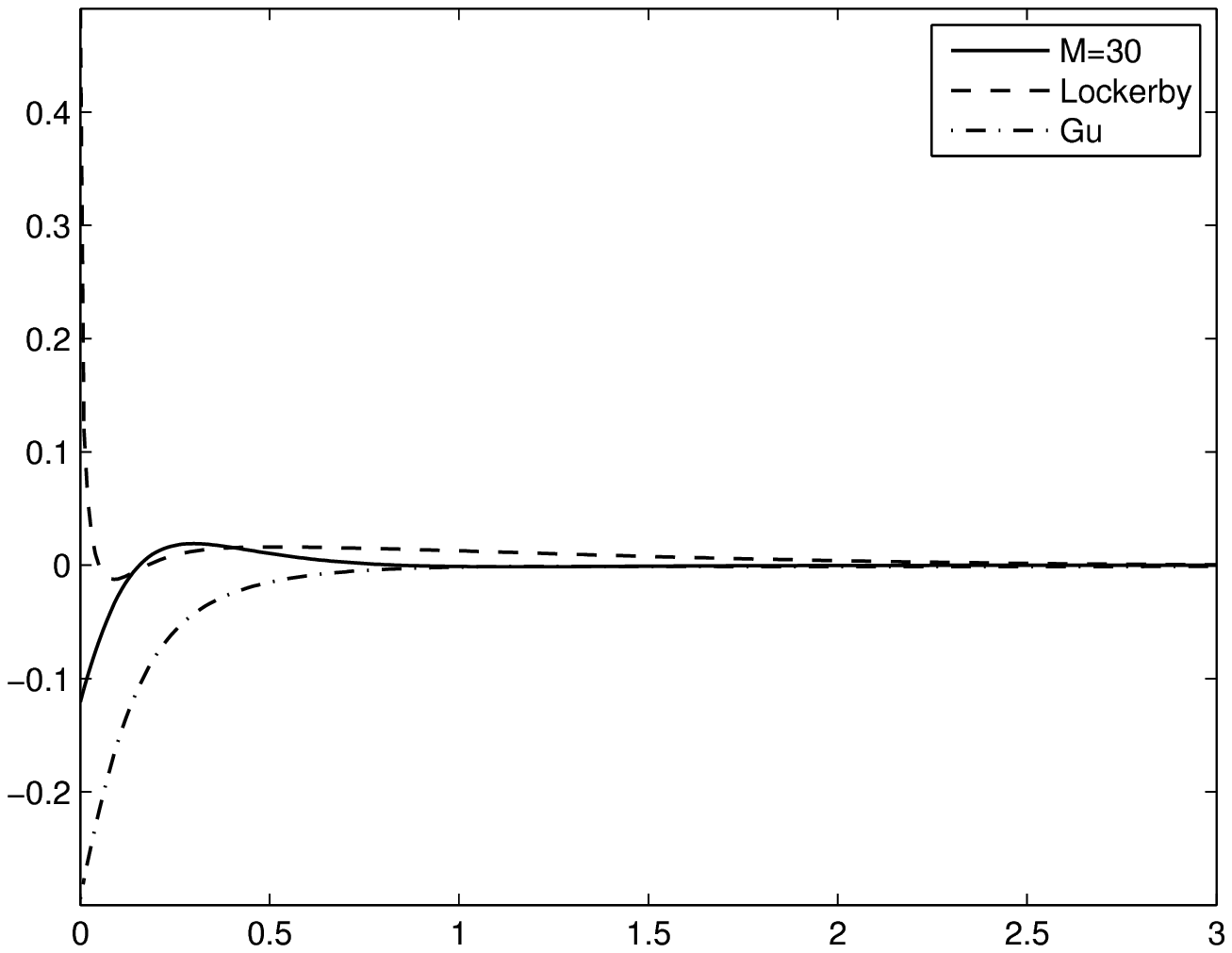}
        \put(-7,42){\small$\mathrm{err}$}
        \put(60,-4){\small$\bar{y}$}
      \end{overpic}}
  \caption{Comparison between effective viscosity $\mu_{\mathrm{eff}}$ with different
    kinetic model.} 
  \label{fig:eff_vis_err}
\end{figure}

In the past, the effective viscosity is well studied. For example, in
\cite{Gu_R26}, Gu investigated the R26 moment equations and predicted
the effective viscosity as
\begin{equation}\label{eq:eff_Gu}
    \mu_{\mathrm{eff}} = \left[1
        -\left(1.3042C_1 \exp\left(-\frac{1.265\bar{y}}{\Kn}\right) 
        + 1.6751C_2 \exp\left(-\frac{0.5102\bar{y}}{\Kn}\right) \right)
    \right]^{-1} \mu,
\end{equation}
where
\[
\begin{aligned}
C_1 = \frac{\chi -2}{\chi} \frac{0.81265 \times 10^{-1}\chi^2 +
  1.2824\chi}{0.48517 \times 10^{-2} \chi^2 + 0.64884 \chi +
  8.0995},\\
C_2 = \frac{\chi -2}{\chi} \frac{0.8565 \times 10^{-3}\chi^2 +
  0.362 \chi}{0.48517 \times 10^{-2} \chi^2 + 0.64884 \chi +
  8.0995}.
\end{aligned}
\]
This model is similar as the linearized HME. Actually, since R26
moment system can be derived from HME, Gu's result can be treated as a
special case of the linearized HME.
In \cite{Lockerby2008}, Lockerby et al. studied the effective
viscosity based on the two low-$\Kn$ BGK results, and proposed an
empirical expression as
\begin{equation}\label{eq:vis_Lockerby}
    \mu_{\mathrm{eff}} = \left(1 
    + 0.1859\bar{y}^{-0.464} \exp\left(-0.7902\bar{y}\right)
    \right)^{-1} \mu.
\end{equation}
For Lockerby's model, we have $\mu_{\mathrm{eff}}\to0$ as $y\to 0+$,
which indicates the velocity gradient to approach infinity at the
wall.

For convenience, here we let $\chi=1$ and $\Kn=1/\sqrt{2}$.  Fig.
\ref{fig:eff_vis} shows the profile of the effective viscosity of
these models. Due to the convergence of the linearized HME, we take
the solution the linearized HME with $M=200$ as the reference
solution. One can observe that Gu gives a relative larger effective
viscosity $\mu_{\mathrm{eff}}$, while Lockerby gives a relative smaller
one. If one want to obtain a good approximation of the effective
viscosity close to the wall, a lot of moments are needed.

We also take the solution of the linearized HME with $M=200$ as the
reference solution, and define the error as
\[
    \mathrm{err} = \mu_{\mathrm{eff}}^{\mathrm{reference}} - \mu_{\mathrm{eff}}^{\mathrm{model}}.
\]
Fig. \ref{fig:eff_vis_err} shows the error of Gu's and Lockerby's
models and the linearized HME with $M=30$.  Gu's model agrees with the
reference very well away from the Knudsen layer and gives too large
effective viscosity, while Lockerby's model gives too small effective
viscosity. For the linearized HME, by choosing a proper $M$, the
effective viscosity can be well captured.

\section{Conclusion}
In this paper, the globally hyperbolic moment equations (HME) is
employed to study Kramers'
problem. Firstly, the set of linearized globally hyperbolic moment
equations and their boundary conditions are built. The analytical
solutions for the defect velocity and slip coefficient have been
obtained for arbitrary order moment equations. In comparison with
data from kinetic theory, it has been shown that they can accurately 
capture the Knudsen layer velocity profile over a wide range of
accommodation coefficients, especially for the small accommodation
coefficients case. The results indicate that the physics of
non-equilibrium gas flow can be captured by high-order HME system.

\section{Acknowledgement}
The research of J. Li is partially supported by the Hong Kong Research
Council ECS grant No. $509213$ during her visit periods at the Hong
Kong Polytechnic University. The research of Y.-W. Fan, J. Li and R.
Li are supported by the National Natural Science Foundation of China
($11325102$, $11421110001$, $91630310$). The research of Z.-H. Qiao is
partially supported by the Hong Kong Research Council ECS grant No.
$509213$ and the Hong Kong Polytechnic University research fund
G-YBKP.



\bibliographystyle{plain}
\bibliography{article}

\begin{thebibliography}{10}

\bibitem{BGK}
P.~L. Bhatnagar, E.~P. Gross, and M.~Krook.
\newblock A model for collision processes in gases. {I}. small amplitude
  processes in charged and neutral one-component systems.
\newblock {\em Phys. Rev.}, 94(3):511--525, 1954.

\bibitem{Bird}
G.~A. Bird.
\newblock {\em Molecular Gas Dynamics and the Direct Simulation of Gas Flows}.
\newblock Oxford: Clarendon Press, 1994.

\bibitem{Boltzmann}
L.~Boltzmann.
\newblock Weitere studien {\"u}ber das w{\"a}rmegleichgewicht unter
  gas-molek{\"u}len.
\newblock {\em Wiener Berichte}, 66:275--370, 1872.

\bibitem{Fan}
Z.~Cai, Y.~Fan, and R.~Li.
\newblock Globally hyperbolic regularization of {G}rad's moment system in one
  dimensional space.
\newblock {\em Comm. Math. Sci.}, 11(2):547--571, 2013.

\bibitem{Fan_new}
Z.~Cai, Y.~Fan, and R.~Li.
\newblock Globally hyperbolic regularization of {G}rad's moment system.
\newblock {\em Comm. Pure Appl. Math.}, 67(3):464--518, 2014.

\bibitem{Grad13toR13}
Z.~Cai, Y.~Fan, and R.~Li.
\newblock On hyperbolicity of 13-moment system.
\newblock {\em Kinetic and Related Models}, 7(3):415--432, 2014.

\bibitem{Li}
Z.~Cai, R.~Li, and Z.~Qiao.
\newblock {NR$xx$} simulation of microflows with {S}hakhov model.
\newblock {\em SIAM J. Sci. Comput.}, 34(1):A339--A369, 2012.

\bibitem{Cercignani1969}
C.~Cercignani.
\newblock {\em Mathematical Methods in Kinetic Theory}.
\newblock Springer US, New York, 1969.

\bibitem{Dongari2009}
N.~Dongari, R.~Sambasivam, and F.~Durst.
\newblock Extended {N}avier-{S}tokes equations and treatments of micro-channel
  gas flows.
\newblock {\em Journal of Fluid Science and Technology}, 4(2):454--467, 2009.

\bibitem{ANRxx}
Y.~Fan and R.~Li.
\newblock Globally hyperbolic moment system by generalized {H}ermite expansion.
\newblock {\em arXiv:1401.4639}, 2014.

\bibitem{Garcia2009}
R.~D.~M. Garcia and C.~E. Siewert.
\newblock The linearized {B}oltzmann equation with {C}ercignani-{L}ampis
  boundary conditions: Basic flow problems in a plane channel.
\newblock {\em Eur. J. Mech. B/Fluids}, 28(3):387--396, 2009.

\bibitem{Grad}
H.~Grad.
\newblock On the kinetic theory of rarefied gases.
\newblock {\em Comm. Pure Appl. Math.}, 2(4):331--407, 1949.

\bibitem{Grucelski2013}
A.~Grucelski and J.~Pozorski.
\newblock Lattice {B}oltzmann simulations of flow past a circular cylinder and
  in simple porous media.
\newblock {\em Computers and Fluids}, 71:406 -- 416, 2013.

\bibitem{Gu_R26}
X.~J. Gu, D.~R. Emerson, and G.~H. Tang.
\newblock Kramers' problem and the {K}nudsen minimum: a theoretical analysis
  using a linearized 26-moment approach.
\newblock {\em Continuum Mech. Thermodyn.}, 21:345--360, 2009.

\bibitem{Gu2010}
X.~J. Gu, D.~R. Emerson, and G.~H. Tang.
\newblock Analysis of the slip coefficient and defect velocity in the knudsen
  layer of a rarefied gas using the linearized moment equations.
\newblock {\em Phys. Rev. E}, 81:016313, Jan 2010.

\bibitem{Guo2006}
Z.~Guo, T.~S. Zhao, and Y.~Shi.
\newblock Generalized hydrodynamic model for fluid flows: From nanoscale to
  macroscale.
\newblock {\em Physics of Fluids}, 18(6), 2006.

\bibitem{Higuera1989}
F.~J. Higuera and S.~Succi.
\newblock Simulating the flow around a circular cylinder with a lattice
  {B}oltzmann equation.
\newblock {\em EPL (Europhysics Letters)}, 8(6):517, 1989.

\bibitem{Karniadakis2002}
G.~E. Karniadakis, A.~Beskok, and N.~Aluru.
\newblock {\em Microflows: Fundamentals and Simulation}.
\newblock Springer-Verlag New York, 2002.

\bibitem{Klinc1972}
T.~Klinc and I.~Kuscer.
\newblock Slip coefficients for general gas-surface interaction.
\newblock {\em Phys. Fluids}, 15:1018, 1972.

\bibitem{Kramers1949}
H.~A. Kramers.
\newblock On the behaviour of a gas near a wall.
\newblock {\em Il Nuovo Cimento (1943-1954)}, 6(2):297--304, 1949.

\bibitem{Levermore}
C.~D. Levermore.
\newblock Moment closure hierarchies for kinetic theories.
\newblock {\em J. Stat. Phys.}, 83(5--6):1021--1065, 1996.

\bibitem{Lilley2007}
C.~R. Lilley and J.~E. Sader.
\newblock Velocity gradient singularity and structure of the velocity profile
  in the {K}nudsen layer according to the {B}oltzmann equation.
\newblock {\em Phys. Rev. E}, 76:026315, Aug 2007.

\bibitem{Lockerby2008}
D.~A. Lockerby and J.~M. Reese.
\newblock On the modelling of isothermal gas flows at the microscale.
\newblock {\em Journal of Fluid Mechanics}, 604:235--261, 6 2008.

\bibitem{Loyalka1967}
S.~K. Loyalka and J.~H. Ferziger.
\newblock Model dependence of the slip coefficient.
\newblock {\em Phys. Fluids}, 10:1833, 1967.

\bibitem{Loyalka1990}
S.~K. Loyalka and K.~A. Hickey.
\newblock The {K}ramers problem: Velocity slip and defect for a hard sphere gas
  with arbitrary accommodation.
\newblock {\em Z. Angew. Math. Phys.}, 41:245, 1990.

\bibitem{Loyalka1971}
S.~K. Loyalka and Z.~Naturforsch.
\newblock Approximate method in kinetic theory.
\newblock {\em Phys. Fluids}, 14:2291--2294, 1971.

\bibitem{Loyalka1975}
S.~K. Loyalka, N.~Petrellis, and T.~S. Storvick.
\newblock Some numerical results for the {BGK} model: Thermal creep and viscous
  slip problems with arbitrary accomodation at the surface.
\newblock {\em Physics of Fluids}, 18(9):1094--1099, 1975.

\bibitem{Marques2001}
W.~Marques~Jr. and G.~M. Kremer.
\newblock Couette flow from a thirteen field theory with slip and jump boundary
  conditions.
\newblock {\em Continuum Mechanics and Thermodynamics}, 13(3):207--217, 2001.

\bibitem{Maxwell}
J.~C. Maxwell.
\newblock On stresses in rarefied gases arising from inequalities of
  temperature.
\newblock {\em Proc. R. Soc. Lond.}, 27(185--189):304--308, 1878.

\bibitem{Mizzi2007}
S.~Mizzi, R.~W. Barber, D.~R. Emerson, J.~M. Reese, and S.~K. Stefanov.
\newblock A phenomenological and extended continuum approach for modelling
  non-equilibrium flows.
\newblock {\em Continuum Mechanics and Thermodynamics}, 19(5):273--283, 2007.

\bibitem{Muller}
I.~M{\"u}ller and T.~Ruggeri.
\newblock {\em Rational Extended Thermodynamics, Second Edition}, volume~37 of
  {\em Springer tracts in natural philosophy}.
\newblock Springer-Verlag, New York, 1998.

\bibitem{Reese2003}
J.~M. Reese, M.~A. Gallis, and D.~A. Lockerby.
\newblock New directions in fluid dynamics: non-equilibrium aerodynamic and
  microsystem flows.
\newblock {\em Philosophical Transactions of the Royal Society of London A:
  Mathematical, Physical and Engineering Sciences}, 361(1813):2967--2988, 2003.

\bibitem{Siewert2001}
C.~E. Siewert.
\newblock {K}ramers' problem for a variable collision frequency model.
\newblock {\em European Journal of Applied Mathematics}, 12:179--191, 4 2001.

\bibitem{Struchtrup2002}
H.~Struchtrup.
\newblock {G}rad's moment equations for microscale flows.
\newblock In A.~D. Ketsdever and E.~P. Muntz, editors, {\em Rarefied Gas
  Dynamics: 23rd International Symposium}, volume 663, pages 792--799. AIP,
  2003.

\bibitem{Struchtrup2008.1}
H.~Struchtrup and M.~Torrilhon.
\newblock Higher-order effects in rarefied channel flows.
\newblock {\em Phys. Rev. E}, 78:046301, Oct 2008.

\bibitem{TorrilhonEditorial}
M.~Torrilhon.
\newblock Special issues on moment methods in kinetic gas theory.
\newblock {\em Continuum Mech. Thermodyn.}, 21(5):341--343, 2009.

\bibitem{Williams2001}
M.~M.~R. Williams.
\newblock A review of the rarefied gas dynamics theory associated with some
  classical problems in flow and heat transfer.
\newblock {\em Zeitschrift f{\"u}r angewandte Mathematik und Physik ZAMP},
  52(3):500--516, 2001.

\bibitem{Zhang2012}
Wen-Ming Zhang, Guang Meng, and Xueyong Wei.
\newblock A review on slip models for gas microflows.
\newblock {\em Microfluidics and Nanofluidics}, 13(6):845--882, 2012.

\end{thebibliography}
\end{document}